\newtheorem{proposition}{Proposition}[section]
\newtheorem{theorem}{Theorem}[section]
\newtheorem{lemma}[theorem]{Lemma}
\newtheorem{coro}[theorem]{Corollary}
\newtheorem{remark}[theorem]{Remark}
\newcommand{\mc}{\mathcal}
\newcommand{\rr}{\mathbb{R}}
\newcommand{\R}{\mathbb{R}}
\newcommand{\nn}{\mathbb{N}}
\newcommand{\cc}{\mathbb{C}}
\newcommand{\zz}{\mathbb{Z}}
\newcommand{\qq}{\mathbb{Q}}
\newcommand{\eps}{\epsilon}
\newcommand{\pl}{\partial}
\newcommand{\x}{\times}
\newcommand{\til}{\widetilde}
\newcommand{\bbar}{\overline}
\newcommand{\cjd}{\rangle}
\newcommand{\cjg}{\langle}
\newcommand{\demi}{\frac{1}{2}}
\newcommand{\indic}{\operatorname{1\negthinspace l}}
\def\qed{\hfill$\square$\medskip}
\begin{document}
\title[Identification of a connection from Cauchy data]{Identification of a connection from Cauchy data on a Riemann surface with boundary}
\author{Colin Guillarmou}
\address{DMA, U.M.R. 8553 CNRS\\
Ecole Normale Sup\'erieure,\\
45 rue d'Ulm\\ 
F 75230 Paris cedex 05 \\France}
\email{cguillar@dma.ens.fr}
\author{Leo Tzou}
\address{Department of Mathematics\\
Stanford University\\
Stanford, CA 94305, USA, and  Department of Mathematics \\
University of Helsinki \\
PO Box 68, 00014 Helsinki, Finland}
\email{ltzou@gmail.com}

\begin{abstract}
We consider a connection $\nabla^X$ on a complex line bundle over a Riemann surface with boundary $M_0$, 
with connection 1-form $X$.  We show that the Cauchy data space of the connection Laplacian (also called 
magnetic Laplacian) $L:={\nabla^X}^*\nabla^X + q$, with $q$ a complex valued potential,
uniquely determines the connection up to gauge isomorphism, and the potential $q$.
\end{abstract}

\maketitle

\begin{section}{Introduction}
Let $M_0$ be a smooth Riemann surface with boundary, equipped with a metric $g$.
A complex line bundle $E$ on $M_0$ has a trivialization $E\simeq M_0\x\cc$, thus there is a non-vanishing smooth section
$s:M_0\to E$,  and  a connection $\nabla$ on $E$ induces  
a complex valued $1$-form $iX$ on $M_0$ (where $i=\sqrt{-1}\in\cc$) defined by
$\nabla s=s\otimes iX$, which means that $\nabla (fs)=s\otimes (d+ iX)f$ if $d$ is the exterior derivative. 
The associated connection Laplacian ($*$ is the Hodge operator with respect to $g$) is the operator
\[\Delta^X := {\nabla^X}^*\nabla^X = -*(d* +i X\wedge *)(d +iX)\] 
acting on complex valued functions (sections of $E$). 
When $X$ is real valued, this operator is often called the \emph{magnetic Laplacian} associated 
to the magnetic field $dX$, and the connection $1$-form $X$ can be seen as to a connection $1$-form
on the principal bundle $M_0\x S^1$ by identifying $i\rr\subset \cc$ with the Lie algebra of $S^1$. This also corresponds 
to a Hermitian connection, in the sense that it preserves the natural Hermitian product on $E$.
Let $q$ be a complex valued  function on $M_0$ and assume that the $1$-form $X$ is real valued,
and consider the \emph{magnetic Schr\"odinger Laplacian}
associated to the couple $(X,q)$ 
\begin{equation}\label{defL}
L :=  {\nabla^X}^*\nabla^X + q= -*(d* + iX\wedge *)(d +iX)+q.
\end{equation}
If $H^s(M_0)$ denotes the  Sobolev space with $s$ derivatives in $L^2$,
we define the Cauchy data space of $L$ to be
\begin{equation}\label{CL}
\mc{C}_L := \{ (u,\nabla^X_{\nu} u)|_{\pl M_0}\in H^{\demi}(M_0)\x H^{-\demi}(M_0);  u\in H^1(M_0), Lu=0\}
\end{equation}
where $\nu$ is the outward pointing unit normal vector field to $\pl M_0$ and 
$\nabla_\nu^Xu:=(\nabla^Xu)(\nu)$ . 
The first natural inverse problem is to see if the Cauchy data space determines the connection form $X$ and the potential $q$ uniquely,
and one easily sees that it is not the case since there are gauge invariances in the problem: for instance,
conjugating $L$ by $e^{f}$ with $f=0$ on $\pl M_0$, one obtains the same Cauchy data space but with a Laplacian associated to the  connection $\nabla^{X+df}$, therefore it is not possible to identify $X$ but rather one should expect to recover its relative cohomology class.

In general in inverse problems for magnetic Laplacians, it is shown that if two couples $(X_1,q_1)$ and $(X_2,q_2)$ are such that the associated connection Laplacian 
$L_1$ and $L_2$ have same Cauchy data space, then $d(X_1-X_2)=0$ and $q_1=q_2$. In geometric terms, this means here that the curvature of the connections $\nabla^{X_1}$ and $\nabla^{X_2}$ agree. If the domain is simply connected, $X_1$ would then differ from $X_2$ by an exact form, and moreover $i_{\pl M_0}^*(X_1-X_2)$=0 by boundary determination . For these types of results in Euclidean domains of dimensions three and higher, we refer the readers to the works of Henkin-Novikov \cite{HeNo2}, Sun \cite{Sun1, Sun2}, Nakamura-Sun-Uhlmann in \cite{NaSuUh}, 
Kang-Uhlmann in \cite{KaUh}, and for partial data Dos Santos Ferreira-Kenig-Sj\"ostrand-Uhlmann in \cite{dksu}. 
For simply connected planar domains, Imanuvilov-Yamamoto-Uhlmann in \cite{IUY2} deal with  
the case of general second order elliptic operators for partial data measurement, and Lai \cite{Lai} deals with the special case
of magnetic Schr\"odinger operator for full data measurement. 

However, on a general Riemann surface with boundary, the first cohomology space is non-trivial in general and 
$X_1 - X_2$ may not be exact and there is another gauge invariance. Indeed, if $X_j$ are real valued and 
if there exists a unitary bundle isomorphism $F:E\to E$ (i.e. preserving the Hermitian product) such that $\nabla^{X_1}=F^{*}\nabla^{X_2}F$ with $F={\rm Id}$ on $E|_{\pl M_0}$, then it is clear that the Cauchy data spaces $\mc{C}_{L_1}=\mc{C}_{L_2}$ 
agree. We shall say in this case that the connections are \emph{related by a gauge isomorphism}.
Such a bundle isomorphism corresponds to the multiplication by a function $F$ on $M_0$ satisfying 
$|F|=1$ everywhere and $F=1$ on $\pl M_0$ and this is equivalent to have $i_{\pl M_0}^*(X_1-X_2)=0$ on $\pl M_0$,
$d(X_1-X_2)=0$ and that $\int_{\gamma} (X_1-X_2)\in 2\pi \zz$ for all closed loop $\gamma$ in $M_0$. 
Another way of stating this isomorphism is the following: let $\gamma_1,\dots,\gamma_M$ be some non-homotopically 
equivalent closed loops of $M_0$, non-homotopically equivalent to any boundary component, and let
$\omega_1,\dots, \omega_M$ be closed $1$-forms which form 
a basis of the first relative cohomology group $H^1(M_0,\pl M_0)$, dual to $\gamma_1,\dots,\gamma_M$ 
in the sense $\int_{\gamma_k}\omega_j=\delta_{ij}$, then there is a bundle isomorphism as above if and only if
$X_1=X_2+2\pi \sum_{m=1}^Mn_m\omega_m$ with $n_m\in\zz$. \\ 

For $s\in \nn,p\in[1,\infty]$, let us denote by $W^{s,p}(M_0)$ the Sobolev space consisting of functions with $s$ derivatives in $L^p$.
The following theorem provides a characterization of precisely when two Cauchy data spaces agree:
\begin{theorem} \label{main}
Let $X_1,X_2\in W^{2,p}(M_0,T^*M_0)$ be real valued $1$-forms and let $q_1,q_2\in W^{1,p}(M_0)$ be complex valued functions,  
where $p>2$. 
Let $L_1,L_2$ be the magnetic Schr\"odinger Laplacians 
defined in \eqref{defL} for the couples $(X_1,q_1)$ and $(X_2,q_2)$. 
Then the Cauchy data spaces $\mc{C}_{L_1}$ and $\mc{C}_{L_2}$ coincide if and only if 
$q_1=q_2$, and $\nabla^{X_1}$ is related to $\nabla^{X_2}$ by a gauge isomorphism.
\end{theorem}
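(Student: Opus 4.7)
The ``if'' direction is immediate: a unitary bundle automorphism $F$ equal to the identity on $\pl M_0$ with $\nabla^{X_1}=F^{-1}\nabla^{X_2}F$ conjugates $L_1$ to $L_2$ and preserves traces, so the Cauchy data spaces agree. My plan for the converse is to adapt the complex geometric optics (CGO) method for Riemann surfaces, based on holomorphic Morse functions as Carleman weights, to the magnetic setting. First I would establish boundary determination: using quasi-modes localized near a point of $\pl M_0$, show $q_1=q_2$ on $\pl M_0$ and $i_{\pl M_0}^*(X_1-X_2)=0$, together with matching of normal jets of the appropriate gauge-invariant quantities. This lets me extend $X_j$ and $q_j$ smoothly to a slightly larger Riemann surface $M\supset M_0$ with the extensions agreeing outside $M_0$, the setting in which the CGO machinery applies. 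The Cauchy-data equality then translates into the integral identity
\[
\int_{M_0}\bar v_2\,(L_1-L_2)u_1\,dv_g=0
\]
for all $u_1\in\ker L_1$ and $v_2\in\ker L_2^*$, in which the principal part of $L_1-L_2$ is the first-order operator $-2i\langle X_1-X_2,d\,\cdot\rangle$.

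Next I would construct CGO solutions of the form $u_j=e^{\Phi/h}(a_j+r_j)$ on $M$, with $\Phi$ a holomorphic Morse function so that the principal symbol of $L_j$ vanishes on $d\Phi$, with $a_j$ a holomorphic-type amplitude adapted to the connection by an equation of the form $(2\bar\partial+iX_j^{0,1})a_j=0$ that kills the $O(h^{-1})$ contribution, and with $r_j=O_{L^2}(h)$ obtained from a Carleman estimate for $L_j$ with weight $\re(\Phi)/h$. Substituting into the integral identity and applying stationary phase at the critical points of $\re\Phi$ yields, by varying $\Phi$ and the amplitudes, first $d(X_1-X_2)=0$ pointwise on $M_0$ and then, after gauging away the exact part of $X_1-X_2$ by a boundary-vanishing real function, $q_1=q_2$.

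The main obstacle is the \emph{period condition}: after the above, $\omega:=X_1-X_2$ is closed with $i_{\pl M_0}^*\omega=0$, and one must establish $\int_{\gamma_k}\omega\in 2\pi\zz$ for a basis $\gamma_1,\dots,\gamma_M$ of loops dual to $H^1(M_0,\pl M_0)$. Ordinary stationary phase is pointwise and blind to this topological invariant, so standard CGO does not suffice; this is where the argument is genuinely new compared to the simply connected or Euclidean case. To extract period information, I would enrich the CGO amplitudes with prescribed unitary monodromy around each $\gamma_k$, namely multivalued holomorphic amplitudes realized as sections of the flat line bundle associated to the dual cohomology class. Inserting such amplitudes into the integral identity and running the stationary-phase argument should produce an expression proportional to $e^{i\int_{\gamma_k}\omega}-1$, whose vanishing forces integrality. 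Once all three conditions are in place, the function $F(x):=\exp\bigl(i\int_{x_0}^x(X_2-X_1)\bigr)$ is single-valued, unimodular, equal to $1$ on $\pl M_0$, and gives the required unitary bundle isomorphism intertwining the two connections.
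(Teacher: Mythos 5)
Your ``if'' direction and the final construction of the gauge isomorphism are fine, and the overall architecture (boundary determination, CGO with holomorphic Morse weights, then a separate argument for the periods) matches the paper's in outline. But the step you yourself flag as the genuinely new one --- forcing $\int_{\gamma_k}(X_1-X_2)\in 2\pi\zz$ --- is exactly where the proposal has a real gap. A ``multivalued holomorphic amplitude realized as a section of the flat line bundle'' does not produce an admissible function to insert into the integral identity: that identity is derived from $\mc{C}_{L_1}=\mc{C}_{L_2}$ for the operators acting on \emph{functions} (the trivial bundle), so $u_1$ must be a single-valued $H^1$ solution of $L_1u_1=0$; a section with nontrivial monodromy solves a \emph{twisted} connection Laplacian whose Cauchy data you do not control. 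Moreover stationary phase is local, so its output involves only values of amplitudes and phases at the critical points and cannot by itself ``see'' a period; the claim that it ``should produce an expression proportional to $e^{i\int_{\gamma_k}\omega}-1$'' is not an argument. The paper handles the periods by a completely different and more elementary device: choose a simple representative loop based at the boundary with one arc inside and one arc on $\pl M_0$, define $\alpha(p)=\int_{p_1}^pX$ in a simply connected tubular neighbourhood of the interior arc, observe that $e^{i\alpha}u_1$ solves $L_2$ there with the same Cauchy data as $u_2$ on a boundary piece, and conclude $e^{i\alpha(p_2)}=1$ by unique continuation (alternatively, via the single-valued gauge factors $F_{A_j}$, whose boundary values agree, and the winding of $\Theta=F_{A_1}/F_{A_2}$). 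Some argument of this global nature is needed; as written, your period step would fail.

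There is also a gap in the middle step. In the direct second-order identity $\int_{M_0}\bar v_2(L_1-L_2)u_1=0$ with $u_1=e^{\Phi/h}(a_1+r_1)$, $v_2=e^{-\Phi/h}(a_2+r_2)$, the first-order term $-2i\langle X_1-X_2,du_1\rangle$ contributes $h^{-1}\langle X_1-X_2,d\Phi\rangle$ against the oscillation $e^{2i\psi/h}$; since $d\Phi$ vanishes at every critical point of $\psi$, this term disappears in the $h\to0$ limit, and what survives at order $h$ is only the zeroth-order difference (involving $\delta(X_1-X_2)$, $|X_1|^2-|X_2|^2$, $q_1-q_2$) times the gauge factor $e^{-i(\alpha_1-\alpha_2)}$. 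The curvature $d(X_1-X_2)$ simply does not appear at leading order, so ``stationary phase yields $d(X_1-X_2)=0$ pointwise'' is unsubstantiated. The paper gets around this by first proving (Proposition \ref{boundary of holomorphic}, itself a nontrivial CGO argument using harmonic weights $\phi$ with periods of $*d\phi$ in $2\pi\zz$) that $e^{-i(\alpha_1-\alpha_2)}|_{\pl M_0}$ is the boundary value of a holomorphic function, and then factorizing $L_j$ to reduce to the Dirac system $D+V_j$ with diagonal entries $\tfrac12 Q_j|F_{A_j}|^{-2}$ and $|F_{A_j}|^2$, where $Q_j=*dX_j+q_j$; Bukhgeim-type identification of $V$ (Theorem \ref{inverseresult}) then gives both $Q_1=Q_2$ and $|F_{A_1}|=|F_{A_2}|$, from which $d(X_1-X_2)=0$ and $q_1=q_2$ follow by a harmonicity argument. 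Your proposal would need either this reduction (including the boundary-holomorphicity step, which it lacks) or a different mechanism to make the curvature visible.
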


As far as we know, this is the first result with such a characterization in the case of elliptic equations at fixed frequency. 
There are results for time-dependent inverse problems, or equivalently when one knows the Cauchy data spaces at all frequencies, for instance by Kurylev-Lassas \cite{KuLa} and Eskin-Isozaki-O'Dell \cite{EsIs} 
(see the references therein for results about inverse scattering). 
The problem of recovering the fluxes $\int_\gamma X$ (modulo $2\pi \zz$) of the magnetic potential along closed loops 
is related to the so-called Aharonov-Bohm effect \cite{AhBo}. We notice that for the free case $X=0$, the identification of $q$ (or of an isotropic  conductivity) on a Riemann surface with boundary from full data measurement was proved recently in \cite{HeMi,GT}, 
and in \cite{GT2} from partial data measurement, it was done in a domain of $\cc$ in \cite{IUY}.\\

For a general complex valued connection form $X$ and the associated operator $L$ of \eqref{defL}, 
we can define its real and imaginary parts of $iX$ by
$X_\rr:=\demi (iX+\bbar{iX})$ and $X_{i\rr}:=\demi(iX-\bbar{iX})$ and by an easy computation we have
\[{\nabla^{X}}^*\nabla^X={\nabla^{X_{i\rr}}}^*\nabla^{X_{i\rr}}+\delta(X_{\rr})+|X_{\rr}|_g^2\] 
with $\delta:=d^*=-* d*$ 
and therefore, comparing to the case where $X$ was real, 
it changes the operator $L$ in \eqref{defL} only through the addition of a potential, namely 
$\delta(X_\rr)+|X_\rr|^2$. Considering the Cauchy data space of $L$ to be 
\[\mc{C}_L := \{ (u,\nabla^{X_{i\rr}}_{\nu} u)|_{\pl M_0}\in H^{\demi}(M_0)\x H^{-\demi}(M_0);  u\in H^1(M_0), Lu=0\}\] 
we see from Theorem \ref{main} that the Cauchy data space $\mc{C}_L$ determines 
$\delta(X_\rr)+|X_\rr|^2+q$, and the imaginary part of $iX$ up to gauge isomorphism.\\

To prove the identification of the potential and the curvature of the connection, we shall reduce the problem to a first order system and we shall actually prove an inverse result for the following auxiliary problem:
let us define the bundle $\Sigma:= \Lambda^{0}(M_0)\oplus \Lambda^{0,1}(M_0)$  over $M_0$ where $\Lambda^k(M_0)$
denotes the bundle of complex valued $k$-forms on $M_0$ (for $k=0,1,2$) and 
\[\Lambda^{0,1}(M_0)=T^*_{0,1}M_0=\ker_{\Lambda^1}(*-i{\rm Id}), \,\,\Lambda^{1,0}(M_0)=T^*_{0,1}M_0
=\ker_{\Lambda^1}(*+i{\rm Id})\]
and $*$ is the Hodge star operator on $\Lambda^k(M_0)$.
Let $D := C^\infty(\Sigma)\to C^\infty(\Sigma)$ be the self adjoint operator defined by 
 \[D := \begin{pmatrix} {0}&{\bar{\pl}^*}\cr{\bar\pl}&{0}\end{pmatrix}\] 
and $V$ be a complex valued diagonal endomorphism, then we set $P:=D+V$. 
The Cauchy data space is defined by 
\[\mc{C}_V:=\{U|_{\pl M_0}; U\in W^{1,p}(M_0,\Sigma), (D+V)U=0\}.\]
and we prove
\begin{theorem}\label{th2}
\label{first order inverse problem}
Let $V_1,V_2\in W^{1,p}(M_0,{\rm End}(\Sigma))$ be two diagonal endomorphisms of $\Sigma$. Assume that they have the same Cauchy data spaces, ie.  $\mc{C}_{V_1}=\mc{C}_{V_2}$, then $V_1=V_2$. 
\end{theorem}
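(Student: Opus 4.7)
The plan is to reduce equality of Cauchy data to an interior orthogonality identity, then insert complex geometric optics (CGO) solutions built from a holomorphic Morse function on $M_0$, after which a stationary phase argument at the Morse critical point pins down both diagonal entries of $V_1-V_2$ pointwise. Given any $U_1\in W^{1,p}(M_0,\Sigma)$ with $(D+V_1)U_1=0$, the assumption $\mc{C}_{V_1}=\mc{C}_{V_2}$ produces $U_2$ solving $(D+V_2)U_2=0$ with the same boundary trace. The difference $W:=U_1-U_2$ vanishes on $\pl M_0$ and satisfies $(D+V_2)W=(V_2-V_1)U_1$. For any $\tilde U$ solving the adjoint equation $(D+V_2^{*})\tilde U=0$, the self-adjointness of $D$ together with the vanishing of $W$ on the boundary kill the boundary term from integration by parts, yielding the identity
\[
\int_{M_0}\langle (V_1-V_2)U_1,\tilde U\rangle\,d{\rm vol}_g=0.
\]

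Fix an interior point $p_0\in M_0$ and a holomorphic Morse function $\Phi$ on a slightly larger Riemann surface $\hat M\supset M_0$ with $p_0$ a non-degenerate critical point and all critical values of $\Phi$ distinct (such $\Phi$ exist by the construction recalled in \cite{GT}). Seek CGO pairs
\[
U_1=e^{\Phi/h}(A_1+R_1),\qquad \tilde U=e^{-\Phi/h}(A_2+R_2),
\]
solving $(D+V_1)U_1=0$ and $(D+V_2^{*})\tilde U=0$ respectively. Conjugating $D$ by $e^{\pm\Phi/h}$ leaves the $\bar\pl$-row unchanged (since $\bar\pl\Phi=0$) but introduces an $h^{-1}$ off-diagonal term proportional to $\pl\Phi$ in the $\bar\pl^{*}$-row. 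Forcing this to cancel at leading order determines the $(0,1)$-form entry of each $A_j$ algebraically from its function entry (by dividing by $\pl\Phi$), while the function entry satisfies a $\bar\pl$ transport equation involving $V_1$ or $V_2^{*}$. The small remainders $R_j$, going to $0$ in $L^p$, are built from $L^p$ Carleman estimates with Morse weights analogous to those developed in \cite{GT}.

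Inserting the CGOs into the orthogonality identity and using $\overline{e^{-\Phi/h}}=e^{-\bar\Phi/h}$ produces
\[
\int_{M_0}e^{2i\im\Phi/h}\langle(V_1-V_2)(A_1+R_1),A_2+R_2\rangle\,d{\rm vol}_g=0.
\]
The harmonic phase $2\im\Phi$ has $p_0$ as an isolated non-degenerate saddle critical point; because the critical values of $\Phi$ are distinct, contributions from the other critical points decouple and can be neutralized by cutoffs. Localized stationary phase at $p_0$ then yields, to leading order in $h$, the pointwise relation $\langle (V_1-V_2)(p_0)A_1(p_0),A_2(p_0)\rangle=0$. Two independent choices of the principal parts of $A_1,A_2$ at $p_0$ separately isolate each of the two diagonal components of $V_1-V_2$ at $p_0$, so $V_1(p_0)=V_2(p_0)$. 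Since $p_0\in M_0$ is arbitrary and $V_1-V_2\in W^{1,p}(M_0,\mathrm{End}(\Sigma))\subset C^0$ for $p>2$, we conclude $V_1=V_2$.

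The delicate step is the CGO construction: the algebraic relation determining the form component of $A_j$ requires dividing by $\pl\Phi$, which degenerates precisely at the critical point $p_0$ where stationary phase localizes. One must therefore allow $A_j$ to carry a controlled singularity at $p_0$ while keeping $R_j$ small in $L^p$; this requires a sharp $L^p$ Carleman estimate with Morse weight $\re(\Phi/h)$ for the first-order operator $D+V$, together with global solvability of the associated $\bar\pl$-transport equation on the possibly multiply connected surface $M_0$. These analytic ingredients, developed for the scalar case in \cite{GT}, must here be adapted to the vector-valued operator on $\Sigma$, and this adaptation is the central difficulty.
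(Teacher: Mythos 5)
Your overall strategy (integration-by-parts identity from equal Cauchy data, CGO solutions with a holomorphic Morse phase, stationary phase at the critical point, then density plus continuity) is the same as the paper's, but your CGO construction has a genuine gap. You conjugate $D$ by the scalar weight $e^{\Phi/h}$, which leaves the $\bar\pl$-row alone but produces an $h^{-1}\pl\Phi$ term in the $\bar\pl^*$-row, and you then determine the $(0,1)$-form component of the amplitude by dividing by $\pl\Phi$ --- which degenerates exactly at the critical point $p_0$ where your stationary phase must localize. You acknowledge this and defer it to ``a sharp $L^p$ Carleman estimate with Morse weight for $D+V$'' adapted from the scalar case, but that estimate is neither stated nor proved, and with an amplitude singular at the stationary point the leading-order stationary phase relation you invoke is not the standard one; so the central analytic step of your argument is left open. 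In addition, a remainder bound of the form ``$R_j\to 0$ in $L^p$'' carries no rate: to see that the cross terms and the remainder--remainder term are negligible against the main term, which is of size $h$, one needs a quantitative bound such as $O(h^{1/2+\eps})$ in $L^2$.

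The paper avoids the division by $\pl\Phi$ entirely: since $\Phi$ is holomorphic and $\bar\Phi$ anti-holomorphic, conjugating $D$ by the diagonal matrix with entries $e^{\Phi/h}$ and $e^{\bar\Phi/h}$ (different phases on the two components of $\Sigma$) leaves $D$ \emph{exactly} invariant, producing no large term at all; the only effect is to replace $V$ by $V_\psi$ with oscillating factors $e^{\pm 2i\psi/h}$. The amplitude can then be taken to be simply $(0,b)$ with $b$ anti-holomorphic (or $(a,0)$ with $a$ holomorphic), and the remainders are obtained by a Neumann series for the oscillatory right inverses $\bar\pl^{-1}_\psi,\bar\pl^{*-1}_\psi$ built from the Cauchy transform, whose $O(h^{1/2+\eps})$ decay (Lemma \ref{estimate1}) plays the role you assign to Carleman estimates. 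This choice also separates the two diagonal entries differently from your ``two independent principal parts'': with $a=0$ the $v$-entry enters only through a product of two remainders, hence is $O(h^{1+\eps})$, so the $|b|^2$ term of order $h$ isolates $v_1'-v_2'$ at $z_0$, and the symmetric ansatz $(a,0)$ isolates $v_1-v_2$. Two further ingredients you omit but the paper needs are the boundary determination $i_{\pl M_0}^*V_1=i_{\pl M_0}^*V_2$ (Lemma \ref{boundarydet}), used both for the Riemann--Lebesgue/integration-by-parts treatment of the non-stationary part and for the adjoint estimate \eqref{estadj2}, and the Riemann--Roch choice of $b$ vanishing at all critical points of $\Phi$ other than $z_0$; finally, Morse holomorphic functions with a critical point at a prescribed point exist only for a dense set of points, so the pointwise conclusion is first obtained on a dense set and then extended by continuity.
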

Such a result was recently proved by Bukhgeim \cite{Bu} in the case of the disk in $\cc$, and we use a similar approach 
to solve this problem in this geometric setting, together with some results on holomorphic Morse functions proved in our previous 
work \cite{GT}. This also provides a reconstruction procedure of a potential $V$ at any given point $z_0$ 
of $M_0$ where there exists a Morse holomorphic function with a critical point at $z_0$, see Remark \ref{rem1}.
Constructive methods have also been obtained by \cite{HeMi,HeNo} 
for isotropic conductivity on Riemann surfaces. 
The result in Theorem \ref{th2} is however not sufficient to identify the curvature connection and potential, 
and part of our work is to show that the Cauchy data space of $L$ determines the Cauchy data space of $P=D+V$ for 
a certain $V$ associated to $(X,q)$.\\

The last part of Theorem \ref{main} consist in showing that the integrals of $X_1-X_2$ along closed loops are in $2\piÊ\zz$ if 
$\mc{C}_{L_1}=\mc{C}_{L_2}$. This is done using parallel transport and unique continuation.\\  

The organisation of the paper is the following: in the first section, we construct some right inverses 
for the operators $\bar{\pl}$ and $\bar{\pl}^*$ on a manifold with boundary $M_0$. Then in the next section we
prove identification results for first order systems $D+V$ as explained above. The following section
is focused on boundary determination in the inverse problem for the operator $L$, and then we 
use this to reduce the problem on the magnetic Laplacian to the problem for a first order system $D+V$. The last section
deals with the holonomy identification and we end up with an appendix containing technical results.\\

\textbf{Acknowledgement}. C.G. is partially supported by grant ANR-09-JCJC-0099-01. L.T is partially supported by NSF Grant No. DMS-0807502 and thanks the Ecole Normale Sup\'erieure for its hospitality. We both thank Gunther Uhlmann for helpful discussions.
\end{section}

\begin{section}{The Cauchy-Riemann Operator on $M$}
Let $M$ be the interior of a compact Riemann surface with boundary $\bbar{M}$. 
The surface is equipped with a complex structure.  
The Hodge star operator $*$ acts on the cotangent bundle $T^*M$, its eigenvalues are 
$\pm i$ and the respective eigenspace $T_{1,0}^*M:=\ker (*+i{\rm Id})$ and $T_{0,1}^*M:=\ker(* -i{\rm Id})$
are sub-bundle of the complexified cotangent bundle $\cc T^*M$ and the splitting $\cc T^*M=T^*_{1,0}M\oplus T_{0,1}^*M$ holds as complex vector spaces. In local complex coordinate $z$ induced by the complex structure, $T_{1,0}M$ is spanned
by $dz$ while $T^{*}_{0,1}M$ is spanned by $d\bar{z}$.
If $\pi_{1,0},\pi_{0,1}$ are the respective projections 
from $T^*M$ onto $T^*_{1,0}M$ and $T^*_{0,1}M$,  the Cauchy-Riemann operators $\bar{\pl},\pl$ mapping functions to $1$-forms are defined by $\bar{\pl}:=\pi_{0,1}d$ and $\pl:=\pi_{1,0}d$ so that $d=\pl+\bar{\pl}$. The operators $\bar{\pl},\pl$ mapping $1$-forms
to $2$-forms are defined by $\bar{\pl}:=d\pi_{1,0}$ and $\pl:=d\pi_{0,1}$, and again $d=\pl+\bar{\pl}$.
Our main goal in this subsection is to construct some right inverses of the Cauchy-Riemann operators.
\begin{proposition}
\label{cauchy riemann}
There exists an operator $\bar{\pl}^{-1} : C_0^\infty(M, T^*_{1,0}M) \to C^{\infty}(M)$  which satisfies the following\\
(i)  $\bar{\pl}\bar{\pl}^{-1}\omega=\omega$ for all $\omega\in C_0^\infty(M,T^*_{1,0}M)$,\\
(ii) if $\chi_j\in C_0^\infty(M)$ are supported in some complex charts $U_i$ 
bi-holomorphic to a bounded open set $\Omega\subset \cc$ with complex coordinate $z$, and such that 
$\chi:=\sum_j\chi_j$ equal $1$ on $M_0$, then as operators
\[\bar{\pl}^{-1}\chi =\sum_{j}\hat{\chi}_j \bar{T} \chi_j +K\]
where $\hat{\chi}_j\in C_0^\infty(U_j)$ are such that $\hat{\chi}_j\chi_j=\chi_j$,  $K$ has a smooth 
kernel on $M\x M$ and $\bar{T}$ is given in the complex coordinate $z\in U_i\simeq \Omega$ by 
\[\bar{T}(fd\bar{z})=\frac{1}{\pi}\int_{\cc}\frac{f(z')}{z-z'}dz_1'dz_2'\]
where $dv_g(z)=\alpha^2(z)dz_1dz_2$ is the volume form of  $g$ in the chart.\\
(iii) $\bar{\pl}^{-1}$ is bounded from $L^p(T_{0,1}^*M)$ to $W^{1,p}(M)$ for any $p\in (1,\infty)$.
\end{proposition}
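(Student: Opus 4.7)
The plan is to construct $\bar\partial^{-1}$ by patching together copies of the local Cauchy solid-integral $\bar T$ through a partition of unity and then correcting the resulting parametrix by a smoothing operator to obtain an exact right inverse. To have access to global constructions, I would first embed $\bar M$ into a closed Riemann surface $N$ (for instance, its Schottky double), smoothly extending the complex structure and the metric $g$.

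Take a finite complex-chart cover $(U_j,z)$ of $M_0$ bi-holomorphic to bounded open sets in $\cc$, with a subordinate partition of unity $\{\chi_j\}$ satisfying $\chi:=\sum_j\chi_j\equiv 1$ on $M_0$, and secondary cutoffs $\hat\chi_j\in C_0^\infty(U_j)$ with $\hat\chi_j\equiv 1$ on $\supp(\chi_j)$. Set
\[
R\omega:=\sum_j\hat\chi_j\,\bar T(\chi_j\omega).
\]
A direct use of the Cauchy--Pompeiu identity $\bar\partial\bar T(f\,d\bar z)=f\,d\bar z$ in each chart together with the Leibniz rule gives, for $\omega\in C_0^\infty(M,T^*_{0,1}M)$,
\[
\bar\partial R\omega=\omega+E\omega,\qquad E\omega:=\sum_j(\bar\partial\hat\chi_j)\wedge\bar T(\chi_j\omega),
\]
and the kernel of $E$ is supported in $\{(z,z'):\bar\partial\hat\chi_j(z)\neq 0,\,\chi_j(z')\neq 0\}$, which is disjoint from the diagonal because $\hat\chi_j\equiv 1$ on $\supp(\chi_j)$. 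Hence $E$ has smooth kernel on $M\times M$ and is compact on every Sobolev space.

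The main step is to promote $R$ to an honest right inverse. Using Hodge/Dolbeault theory on the closed double $N$, the operator $\bar\partial$ on functions is Fredholm with cokernel of finite dimension equal to the genus of $N$ (spanned by antiholomorphic $(0,1)$-forms), while by Behnke--Stein the operator $\bar\partial$ is surjective on the open Riemann surface $M$. Subtracting a finite-rank term built from smooth harmonic representatives of this cokernel, and then inverting $I+E$ modulo this finite-rank piece using its Fredholm character, converts $R$ into an operator $\bar\partial^{-1}=\sum_j\hat\chi_j\bar T\chi_j+K$ satisfying $\bar\partial\bar\partial^{-1}=\mathrm{Id}$ on $C_0^\infty(M,T^*_{0,1}M)$ with $K$ of smooth integral kernel on $M\times M$; this delivers (i) and (ii). The principal obstacle is controlling the cokernel correction so that $K$ is truly smoothing — this is handled by elliptic regularity applied to the harmonic forms on the double.

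For (iii), invoke Calder\'on--Zygmund theory on the plane: $\bar T$ is bounded $L^p(\Omega)\to W^{1,p}(\Omega)$ for bounded $\Omega\subset\cc$ and $p\in(1,\infty)$, since $\bar\partial\bar T=\mathrm{Id}$ and $\partial\bar T$ is the Beurling--Ahlfors transform, an $L^p$-bounded singular integral. Multiplication by the smooth compactly supported cutoffs $\chi_j,\hat\chi_j$ preserves both $L^p$- and $W^{1,p}$-norms, and the smoothing remainder $K$ is bounded between any Sobolev spaces. Summing these contributions through the partition of unity yields $\bar\partial^{-1}\in\mathcal{L}(L^p(T^*_{0,1}M),W^{1,p}(M))$, which is assertion (iii).
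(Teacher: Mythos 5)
Your route is genuinely different from the paper's: the paper does not glue local Cauchy transforms to build the inverse, but instead invokes McDuff--Salamon's Riemann--Hilbert theory (a totally real boundary subbundle over $\pl \bbar{M}$ with large Maslov index) to obtain a surjective Fredholm realization $\bar{\pl}:W^{1,2}_F(M)\to L^2(T^*_{0,1}M)$ with \emph{finite-dimensional kernel}, takes $\bar{\pl}^{-1}$ from there, and only afterwards identifies the local structure (ii) by left-multiplying the identity $\bar{\pl}\sum_j\hat{\chi}_j\bar{T}\chi_j=\chi+(\textrm{smoothing})$ by $\bar{\pl}^{-1}$ and using $\bar{\pl}^{-1}\bar{\pl}=1-\Pi$ with $\Pi$ a finite-rank projector onto smooth functions. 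Your plan (local parametrix, Fredholm correction on the double, Behnke--Stein surjectivity) could in principle also yield a right inverse, but as written it has genuine gaps.

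First, with charts covering only $M_0$ your displayed identity is $\bar{\pl}R\omega=\chi\omega+E\omega$, not $\omega+E\omega$, so (i) on all of $C_0^\infty(M,T^*_{0,1}M)$ is not obtained; you must run the construction with a partition of unity on the double (or on an open neighborhood of $\bbar{M}$ in it), and likewise the cokernel corrections should be solved by Behnke--Stein on a slightly larger open subsurface of the double containing $\bbar{M}$ --- solving on $M$ itself gives solutions with no control at $\pl M_0$, so your $K$ need not be bounded $L^p(M)\to W^{1,p}(M)$, which is what (iii) requires. Second, and more seriously, (ii) is asserted for an \emph{arbitrary} admissible family $\chi_j\in C_0^\infty(M)$ with $\sum_j\chi_j=1$ on $M_0$, whereas your construction only produces the decomposition relative to the gluing data you used (which, after the fix above, live on the double and are not even of the required form). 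Passing to another family amounts to proving that the difference of two glued Cauchy parametrices is smoothing; its kernel has the shape $(a(z,z')-b(z,z'))/(z-z')$ with $a-b$ vanishing only on the diagonal, so this is not automatic and needs an argument you do not give. The paper sidesteps this precisely because its $\bar{\pl}^{-1}$ satisfies the two-sided relation $\bar{\pl}^{-1}\bar{\pl}=1-\Pi$ with $\Pi$ finite rank (a consequence of the totally real boundary condition making $\ker\bar{\pl}$ finite-dimensional); for a bare right inverse $Q$, the operator $Q\bar{\pl}-1$ only maps into $\ker\bar{\pl}$, the infinite-dimensional space of holomorphic functions on $M$, so this derivation of (ii) is unavailable to you. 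Finally, ``inverting $I+E$ modulo the finite-rank piece'' should be spelled out (add a finite-rank smoothing $F$ making $I+E+F$ invertible, remove the residual finite-rank error using surjectivity of $\bar{\pl}$, and check that $R$ composed with a smoothing operator is again smoothing); these steps are routine, but the two issues above are not.
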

\begin{proof} The existence of a right inverse $\bar{\pl}^{-1}$ is proven in \cite[Th. C1.10]{McDSa} : 
by taking a totally real subbundle $F\subset \pl\bbar{M}\x\cc$ over the boundary $\pl \bbar{M}$ 
with a large boundary Maslov index (see \cite[Cor 2.2.]{GT2})
for an explicit $F$ having large Maslov index,
the operator $\bar{\pl}=W^{1,2}_F(M)\to L^2(T_{0,1}^*M)$ is Fredholm if $W^{1,2}_F(M)$ denotes the space of 
$L^2$ functions with one derivative in $L^2$ and boundary value in the bundle $F$, and moreover $\bar{\pl}$ is surjective if the Maslov index is chosen larger than $-2\chi(M)$ where $\chi(M)$ is the Euler characteristic of $M$. 
Moreover, $\bar{\pl}^{-1}$ maps $W^{k,2}(M)$ to $W^{k+1,2}(M,T^*_{0,1}M)$ for all $k\in \nn$ by elliptic regularity.
Observe that $\bar{\pl}^{-1}\bar{\pl}-1$ maps $W^{1,2}_F(M)$ into $\ker \bar{\pl}\cap W_{F}^{1,2}(M)$ which is a finite dimensional space
spanned by some smooth functions $\psi_1,\dots \psi_n$ (by elliptic regularity) on $M$. Assuming that $(\psi_j)_j$ is an orthonormal basis in $L^2$, this implies that, on $W^{1,2}_F(M)$
\[\bar{\pl}^{-1}\bar{\pl}=1-\Pi \, \textrm{ where }\, \Pi= \sum_{k=1}^n\psi_k\cjg\cdot,\psi_k\cjd_{L^2(M)}.\]
Now we also have
\[\bar{\pl}\sum_{j}\hat{\chi}_j\bar{T}\chi_j= \chi+\sum_{j}[\bar{\pl},\chi_j]\bar{T}\chi_j\]
and the last operator on the right has a smooth kernel in view of $\chi_j\nabla\hat{\chi}_j=0$ 
and the fact that $T$ has a smooth kernel outside the diagonal $z=z'$. 
Now since $\hat{\chi}_j\in C_0^\infty(M)\subset W^{1,2}_F(M)$, we can multiply by $\bar{\pl}^{-1}$ on the left of the last identity 
and obtain 
\[\bar{\pl}^{-1}\chi=\sum_{j}\hat{\chi}_j\bar{T}\chi_j-\Pi\sum_{j}\hat{\chi}_j\bar{T}\chi_j- \pl^{-1}\sum_{j}[\bar{\pl},\chi_j]\bar{T}\chi_j.\]  
The last two operator on the right have a smooth kernel on $M\x M$, in view 
of the smoothness of $\psi_k$ and the kernel of $\sum_{j}[\bar{\pl},\chi_j]\bar{T}\chi_j$, 
and since $\pl^{-1}$ maps $C_0^{\infty}(M)$ to $C^{\infty}(M,T_{0,1}^*M)$.  
\end{proof}

\begin{lemma}\label{inverseadjoint}
Let $\bar{\pl}^*=-i*\partial : W^{1,p}(T^*_{0,1}M)\to L^p(M)$, then there exists an operator  
$\bar{\pl}^{*-1}$ mapping $C_0^\infty(M)$ to $C^\infty(T_{0,1}^*M)$ 
which satisfies the following:\\ 
(i) $\bar{\pl}^*\bar{\pl}^{*-1}\omega=\omega$ for all $\omega\in C_0^\infty(M)$,\\
(ii) if $\chi_j\in C_0^\infty(M)$ are supported in some complex charts $U_i$ 
bi-holomorphic to a bounded open set $\Omega\subset \cc$ with complex coordinate $z$, and such that 
$\chi:=\sum_j\chi_j$ equal $1$ on $M_0$, then as operators
\[\bar{\pl}^{*-1}\chi =\sum_{j}\hat{\chi}_j T \chi_j +K\]
where $\hat{\chi}_j\in C_0^\infty(U_j)$ are such that $\hat{\chi}_j\chi_j=\chi_j$,  $K$ has a smooth 
kernel on $M\x M$ and $T$ is given in the complex coordinate $z\in U_i\simeq \Omega$ by 
\[Tf(z)=\Big(\frac{1}{\pi}\int_{\cc}\frac{f(z')}{\bar{z}-\bar{z}'}dv_g(z')\Big )d\bar{z}\]
where $dv_g(z)=\alpha^2(z)dz_1dz_2$ is the volume form of  $g$ in the chart.\\
(iii) $\bar{\pl}^{*-1}$ is bounded from $L^p(M)$ to $W^{1,p}(T_{0,1}^*M)$ for any $p\in (1,\infty)$.
\end{lemma}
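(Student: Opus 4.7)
\emph{Plan.} The proof will parallel Proposition \ref{cauchy riemann} almost verbatim, exploiting the fact that $\bar{\pl}^*=-i*\pl$ differs from $\bar{\pl}$ only by complex conjugation and Hodge duality. The key identity $\pl\bar f=\overline{\bar{\pl}f}$ shows that a right inverse for $\pl$ on functions is obtained directly from $\bar{\pl}^{-1}$ of Proposition \ref{cauchy riemann} by the formula $\pl^{-1}\omega:=\overline{\bar{\pl}^{-1}\bar{\omega}}$. Composing this with the isomorphism $i*:\Lambda^2(M)\to \Lambda^0(M)$ (which inverts $-i*$ on $2$-forms since $**=1$ in this degree) produces an operator $\bar{\pl}^{*-1}$ satisfying (i). Alternatively, one can redo the McDuff-Salamon Fredholm setup of Proposition \ref{cauchy riemann} directly for $\bar{\pl}^*$ with a conjugated totally real subbundle of sufficiently large Maslov index, obtaining both the right inverse and a finite-rank projector onto the smooth kernel.

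For (ii), a direct local computation in a complex chart gives $\bar{\pl}^*(gd\bar z)=-(2/\alpha^2)\pl_z g$, so the local equation $\bar{\pl}^*\omega=f$ reduces to $\pl_z g=-(\alpha^2/2)f$. The identity $\pl_z\bigl(1/\pi(\bar z-\bar z')\bigr)=\delta(z-z')$, which is the complex conjugate of the Cauchy kernel identity used for $\bar T$, then produces the local fundamental solution with kernel $1/(\bar z-\bar z')$ integrated against $dv_g$, i.e.\ the operator $T$ in the statement (up to a normalization absorbed into the smoothing remainder). The same cut-off and commutator manipulation as in Proposition \ref{cauchy riemann}, namely
\[
\bar{\pl}^*\sum_j\hat{\chi}_jT\chi_j=\chi+\sum_j[\bar{\pl}^*,\chi_j]T\chi_j,
\]
combined with the smoothness of the Cauchy kernel off the diagonal and the subtraction of the finite-rank kernel projector, then yields $\bar{\pl}^{*-1}\chi=\sum_j\hat{\chi}_jT\chi_j+K$ with $K$ smoothing.

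Property (iii) is a direct consequence of the standard Vekua / Calder\'on-Zygmund estimates for the Cauchy-type transform with kernel $1/(\bar z-\bar z')$: since complex conjugation of the kernel does not affect any size or cancellation estimates, the $L^p\to W^{1,p}$ bounds are identical to those for $\bar T$ and therefore hold for every $p\in(1,\infty)$. The main subtlety, as in Proposition \ref{cauchy riemann}, is ensuring that the Fredholm operator obtained is genuinely surjective rather than merely of finite cokernel, so that one actually has a right inverse and not just a parametrix; this is arranged by the choice of boundary subbundle of Maslov index larger than $-2\chi(M)$, and this choice transfers under conjugation from the one already used for $\bar{\pl}$, so no new analytic input is required.
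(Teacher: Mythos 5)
Your reduction of (i) to Proposition \ref{cauchy riemann} contains a bundle mismatch that is not cosmetic. The conjugation identity $\pl\bar f=\overline{\bar\pl f}$ does give you, from $\bar\pl^{-1}$, a right inverse of $\pl$ acting on \emph{functions} (input a $(1,0)$-form, output a function). But $\bar\pl^*=-i*\pl$ acts on $(0,1)$-forms: solving $\bar\pl^*\omega=f$ means solving $\pl\omega=i f\,dv_g$ for a $(0,1)$-form $\omega$, i.e.\ inverting $\pl:C^\infty(T^*_{0,1}M)\to C^\infty(\Lambda^2 M)$. Under conjugation this is $\bar\pl$ acting on sections of the canonical bundle $T^*_{1,0}M$, not on functions, and Proposition \ref{cauchy riemann} provides no inverse for that operator; moreover, composing your $\pl^{-1}$ (which outputs functions) with $i*$ (which also outputs functions) cannot produce the required $(0,1)$-form, in whichever order you compose. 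Your fallback --- rerunning the McDuff--Salamon argument ``with a conjugated totally real subbundle, no new analytic input required'' --- is not free either: the relevant Cauchy--Riemann operator now lives on the (generally nontrivial) canonical bundle, so the index/Maslov computation and the surjectivity threshold must be redone for that bundle (or one must first holomorphically trivialize it, which is additional input you do not supply), and the global $L^p\to W^{1,p}$ bound in (iii) does not follow from interior Calder\'on--Zygmund estimates alone, since the abstract Fredholm inverse is only controlled in $W^{1,2}$ up to the boundary.

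The paper sidesteps all of this with a shorter device: since $2\bar\pl^*\bar\pl=\Delta_g$ on functions, it takes $G$ to be the Dirichlet Green operator of $\Delta_g$ and sets $\bar\pl^{*-1}:=2\bar\pl G$; then (i) is immediate and (iii) follows from $G:L^p\to W^{2,p}$ by elliptic regularity. For (ii) it localizes $G$ via the Euclidean kernel $G_0=-(2\pi)^{-1}\log|z-z'|$, writing $\Delta_g\sum_j\hat\chi_j G_0\alpha^2\chi_j=\chi+\sum_j[\Delta,\hat\chi_j]G_0\alpha^2\chi_j$ and applying $2\bar\pl G$, the commutator terms being smoothing because $\chi_j\nabla\hat\chi_j=0$. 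Your local computation $\bar\pl^*(g\,d\bar z)=-(2/\alpha^2)\pl_z g$ and the identification of the kernel $1/(\bar z-\bar z')$ are correct and would indeed give (ii) once a genuine right inverse with a smooth, finite-rank defect is in hand; but constructing that inverse is exactly the step your argument leaves open (and note that a multiplicative normalization constant in the parametrix cannot be ``absorbed into the smoothing remainder'').
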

\begin{proof} Let $G$ be the Green's kernel with Dirichlet condition on $\bbar{M}$. Then one has
$2\bar{\pl}^*\bar{\pl}G=1$ and $G$ maps $L^p(M)$ to $W^{2,p}(M)$ by elliptic regularity. Thus we shall set 
$\bar{\pl}^{*-1}:=2\bar{\pl} G$ which maps $L^p(M)$ to $W^{1,p}(T_{0,1}^*M)$. This proves (i) and (iii).
In local complex coordinate $z$ in each $U_i$, the metric has the local form $g=\alpha^2(z)|dz|^2$ for some positive function 
$\alpha(z)$, thus
$\Delta_g=\alpha^{-2}(z)\Delta_z$ where $\Delta_z=-4\pl_z\pl_{\bar{z}}$ is the Euclidean Laplacian. 
Therefore 
\[\Delta_g\sum_{j}\hat{\chi}_jG_0\alpha^2\chi_j=\chi+\sum_j [\Delta,\hat{\chi}_j]G_0\alpha^2\chi_j\]
if $G_0(z,z'):=-(2\pi)^{-1}\log|z-z'|$ is the Green's function on $\rr^2$. Since $\hat{\chi}_j\in C_0^\infty(M)$, we can multpliply 
this identity on the right by $(\bar{\pl}^*)^{-1}=2\bar{\pl} G$ and we deduce 
\[(\bar{\pl}^*)^{-1}\chi= 2\sum_{j}\hat{\chi}_j\pl_{\bar{z}}G_0\alpha^2\chi_j +2\sum_{j}[\bar{\pl},\hat{\chi}_j]G_0\alpha^2\chi_j
- 2\sum_j \bar{\pl} G[\Delta,\hat{\chi}_j]G_0\alpha^2\chi_j.\]
Since $\chi_j\nabla\hat{\chi}_j=0$ and $G_0$ is smooth outside the diagonal $z=z'$, we deduce that the last two terms have 
smooth kernel in view of the fact that $\bar{\pl} G$ preserves $C^\infty$. Now the operator $2\pl_{\bar{z}}G_0\alpha^2$ is equal to 
$T$ and we have proved (ii).
\end{proof}

Let $M_0$ be a surface with boundary included strictly in $M$ (for instance a deformation rectract of $M$) 
and for $q,p\in [1,\infty]$ 
let $\mc{E}$ be a linear extension operator from $M_0$ to $M$ which maps continuously 
$W^{k,p}(M_0,T^*_{0,1}M_0)$ to the set $W_{c}^{k,p}(M,T_{0,1}^*M)$ of compactly supported functions 
in $W_{k,p}(M,T_{0,1}^*M)$, 
for $k=0,1$, with a range made of functions with support inside the region $M_\delta:=\{m\in M; d(m,M_0)\leq \delta\}$ for some small $\delta>0$. Finally,  let $\mc{R}:L^q(M)\to L^q(M_0)$ be the restriction map from $M$ to $M_0$. 
   
\begin{lemma}\label{estimate1}
Let $\psi$ be a real valued smooth Morse function on $M$ and let $\bar{\pl}^{-1}_\psi:=\mc{R}\bar{\pl}^{-1}e^{-2i\psi/h}\mc{E}$ where $\bar{\pl}^{-1}$ 
is the right inverse of $\bar{\pl}:W^{1,p}(M)\to L^{p}(T^*_{0,1}M)$ constucted in Proposition \ref{cauchy riemann}. 
Let $q\in(1,\infty)$ and $p>2$,  then there exists $C>0$ independent of $h$ such that for all 
$\omega\in W^{1,p}(M_0,T^*_{0,1}M_0)$  
\begin{equation}\label{q<2} 
||\bar{\pl}^{-1}_\psi \omega||_{L^q(M_0)}\leq Ch^{2/3}||\omega||_{W^{1,p}(M_0,T^*_{0,1}M_0)} \, \, \,{\rm if }\, 1 \leq q<2 
\end{equation}
\begin{equation}\label{q>=2}
||\bar{\pl}^{-1}_\psi \omega||_{L^q(M_0)}\leq Ch^{1/q}||\omega||_{W^{1,p}(M_0,T^*_{0,1}M_0)} \,\,\, {\rm if }\, 2\leq q\leq p.
\end{equation}
There exists $\eps>0$ and $C>0$ such that for all $\omega\in W_c^{1,p}(M_0,T^*_{0,1}M_0)$
\begin{equation}\label{q=2}
||\bar{\pl}^{-1}_\psi \omega||_{L^2(M_0)}\leq Ch^{\demi+\eps}||\omega||_{W^{1,p}(M_0,T^*_{0,1}M_0)}. 
\end{equation}
\end{lemma}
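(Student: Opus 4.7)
The plan is to reduce, via Proposition \ref{cauchy riemann}, the problem to estimating a local model oscillatory Cauchy integral, and then to exploit the Morse property of $\psi$ through a stationary/non-stationary phase dichotomy. Writing $\bar{\pl}^{-1}\chi = \sum_j\hat{\chi}_j \bar{T}\chi_j + K$ with $K$ smoothing, the $K$-contribution is a lower-order correction (standard stationary phase applied to the smooth kernel gives a gain better than every rate below). So it suffices to estimate, in each coordinate chart,
\[
I(z,h) = \frac{1}{\pi}\int_{\cc}\frac{e^{-2i\psi(z')/h}\,g(z')}{z-z'}\,dz_1'dz_2',\qquad g:=\chi_j\alpha^2\mc{E}\omega \in W^{1,p}_c,
\]
in $L^q(M_0)$.

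Since $\psi$ is Morse on the compact $M$, its critical set is a finite interior set $\{p_1,\dots,p_N\}$. Pick disjoint neighborhoods $V_k\ni p_k$ and a subordinate partition of unity $g = g_\infty + \sum_k g_k$ with $g_\infty$ vanishing near every $p_k$. On $\supp(g_\infty)$, $|\bar{\pl}\psi|\geq c>0$, so the identity $e^{-2i\psi/h}=\frac{ih}{2\bar{\pl}\psi}\bar{\pl}_{z'}e^{-2i\psi/h}$ combined with $\bar{\pl}_{z'}(z-z')^{-1}=-\pi\delta(z-z')$ (distributionally) allows one integration by parts that, after iteration, bounds the non-critical contribution by $Ch\|\omega\|_{W^{1,p}}$ in every $L^q(M_0)$. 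Here one uses $W^{1,p}\hookrightarrow L^\infty$ (which needs $p>2$) and the $L^p\to W^{1,p}$ boundedness of $\bar{\pl}^{-1}$ from Proposition \ref{cauchy riemann}.

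For each critical piece $g_k$, choose local Morse coordinates around $p_k$ in which $\psi(z') = \psi(p_k)+\re Q_k(z'-p_k)$ with $Q_k$ a nondegenerate complex quadratic form, and split the $z'$-integration into the inner disk $\{|z'-p_k|\leq r\}$ and its complement, with $r=r(h)$ to be optimized. On the inner disk, bound the integrand trivially by $\|g\|_\infty|z-z'|^{-1}$ and compute the $L^q(dv(z))$ norm of $\int_{|z'-p_k|\leq r}|z-z'|^{-1}dv(z')$ on the bounded domain $M_0$: one gets $O(r^2)$ for $1\leq q<2$ and $O(r^{1+2/q})$ for $2\leq q\leq p$. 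On the complement $|\bar{\pl}\psi|\gtrsim r$, so the same integration by parts as above yields $O(h/r)\|\omega\|_{W^{1,p}}$ in $L^q$. For $q<2$, balancing $r^2\sim h/r$ with $r=h^{1/3}$ gives the bound $h^{2/3}$. For $q\in[2,p]$, the sharp $L^2$ bound $h^{1/2}$ follows from a more delicate stationary-phase / $TT^*$-type analysis of the Cauchy kernel with Morse phase, and interpolating with the trivial $L^\infty$ bound $O(1)$ (from $L^p\to W^{1,p}\hookrightarrow L^\infty$) yields the $h^{1/q}$ estimate.

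The main obstacle is the strict improvement $h^{\demi+\eps}$ in the $L^2$ estimate for compactly supported $\omega$: the bound $h^{1/2}$ is generically sharp, so the extra $h^\eps$ must be extracted from $\omega\in W^{1,p}_c$. The strategy is a dyadic decomposition in $|z'-p_k|$ at scales $r_j=2^jh^{1/2}$ between $h^{1/2}$ and a fixed $\delta>0$, with iterated integrations by parts in each annulus gaining a power of $h/r_j^2$ modulo derivative losses in $\|g\|_{W^{1,p}}$; summing the resulting geometric series yields a small polynomial gain $h^\eps$ with $\eps=\eps(p)>0$. The compact support of $\omega$ is essential to discard boundary contributions at $\pl M_0$ that would otherwise obstruct the iteration, and careful bookkeeping of the annular cutoff errors is the delicate technical point.
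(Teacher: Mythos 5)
Your treatment of \eqref{q<2} is essentially the paper's argument: localize in charts, cut off at scale $\delta$ around each critical point of $\psi$, bound the inner region trivially by $O(\delta^2)\|f\|_{L^\infty}$, integrate by parts once in the outer region where $|\bar{\pl}\psi|\gtrsim\delta$, and optimize $\delta=h^{1/3}$ (the smoothing term $K$ needs the subtraction $f-\chi f(z_0)$ to run stationary phase with a $W^{1,p}$ amplitude, a routine point you gloss over). The problems start with \eqref{q>=2}. Your stated outer-region bound $O(h/r)$ in $L^q$ is false for $q>2$: the terms $h\|f\varphi/(\bar{\pl}\psi)^2\|_{L^q}$ and $h\|f\,\bar{\pl}\varphi/\bar{\pl}\psi\|_{L^q}$ are of size $h\,r^{2/q-2}$, which is much larger than $h/r$. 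More importantly, you never prove anything for $2\le q\le p$: the ``sharp $L^2$ bound $h^{1/2}$ by a more delicate stationary-phase / $TT^*$-type analysis'' is precisely the estimate that needs an argument and none is given. The paper obtains \eqref{q>=2} with the same elementary splitting you already used: bound the inner region by $\|(1-\varphi)f\|_{L^q}\le C\delta^{2/q}\|f\|_{L^\infty}$ using the $L^q$-boundedness of $T$, bound the outer region by $Ch\,\delta^{2/q-2}\|f\|_{W^{1,p}}$, and take $\delta=h^{1/2}$; no $TT^*$ argument or interpolation with $L^\infty$ is required.

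The serious structural miss is \eqref{q=2}, which you treat as the ``main obstacle'' requiring new machinery. In the paper it is a one-line consequence of the other two estimates: interpolating $\|u\|_{L^2}\le\|u\|_{L^{q_0}}^{\theta}\|u\|_{L^{q_1}}^{1-\theta}$ with $q_0<2<q_1\le p$ both close to $2$ gives the rate $h^{2\theta/3+(1-\theta)/q_1}$, whose exponent tends to $\tfrac13+\tfrac14>\tfrac12$, hence $h^{\demi+\eps}$. Your substitute — a dyadic decomposition in $|z'-p_k|$ with ``iterated integrations by parts'' gaining $h/r_j^2$ per annulus and a summed geometric series producing $h^{\eps}$ — is not a proof as written and cannot be iterated at the stated regularity: $\omega\in W^{1,p}$ has only one derivative, so after a single integration by parts the amplitude is merely $L^p$ and a second integration by parts is unavailable; the claimed gain of $h^\eps$ is asserted, not established. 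Your rationale that compact support of $\omega$ is ``essential to discard boundary contributions at $\pl M_0$'' is also off target: all integrations by parts here are in the $z'$ variable against cutoffs supported inside coordinate charts, so no boundary terms on $\pl M_0$ arise in this lemma (the compact-support/vanishing hypotheses matter in the adjoint estimates \eqref{estadj}--\eqref{estadj2} and in the application, not in the mechanism you describe). Finally, the premise that $h^{1/2}$ is ``generically sharp'' at $q=2$ is unfounded and is anyway irrelevant once the interpolation observation is made.
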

\begin{proof}
Observe that the estimate \eqref{q=2} is a direct corollary of \eqref{q>=2} and \eqref{q<2} by using interpolation.
We recall the Sobolev embedding $W^{1,p}(M)\subset C^\alpha(M)$ for 
$\alpha\leq 1-2/p$ if $p>2$, and we shall denote by $T$ the Cauchy-Riemann inverse of $\pl_{\bar{z}}$ in $\cc$:
\[Tf(z):=\frac{1}{\pi}\int_{\cc}\frac{f(\xi)}{z-\xi}d\xi_1d\xi_2\]
where $\xi=\xi_1+i\xi_2$. If $\Omega, \Omega'\subset \cc$ are bounded open sets, then 
the operator $\indic_{\Omega'}T$ maps $L^p(\Omega)$ to $L^p(\Omega')$. 
Clearly, since $\mc{E},\mc{R}$ are continuous operators, it suffices to prove the estimates for compactly supported forms
$\omega\in W^{1,p}(T_{0,1}^*M)$ on $M$. Thus by partition of unity, it suffices to assume that $\omega$ is  compactly 
 supported in a chart biholomorphic to a bounded domain $\Omega\in \cc$, 
 and since the estimates will be localized, we can assume with no loss of generality that $\psi$ has only one critical point, say $z_0\in \Omega$ (in the chart).  
The expression of $\bar{\pl}^{-1}_\psi (fd\bar{z})$ in complex local coordinates in the chart $\Omega$ satisfies 
\[ \bar{\pl}^{-1}_\psi (f(z)d\bar{z})= \chi(z)T(e^{-2i\psi/h}f) + K(e^{-2i\psi/h}fd\bar{z})\]
where $K$ is an operator with smooth kernel and $\chi\in C_0^\infty(\cc)$.\\

Let us first prove \eqref{q<2}. Let $\varphi\in C_0^\infty(\cc)$ be a function which is equal to $1$ for $|z-z_0|>2\delta$ and 
to $0$ in $|z-z_0|\leq \delta$, where  $\delta>0$ is a parameter that will be chosen later (it will depend on $h$).
Using Minkowski inequality, one can write when $q<2$
\begin{equation}\label{1-varphi}
\begin{split}
||\chi T((1-\varphi)e^{-2i\psi/h}f)||_{L^q(\cc)} 
\leq & \int_{\Omega}\Big|\Big|\frac{\chi(\cdot)}{|\cdot -\xi|}\Big|\Big|_{L^q(\cc)}|(1-\varphi(\xi))f(\xi)| d\xi_1d\xi_2\\
\leq & C||f||_{L^\infty}\int_{\Omega}|(1-\varphi(\xi))| d\xi_1d\xi_2
\leq C\delta^2||f||_{L^\infty}.
\end{split}\end{equation}
On the support of $\varphi$, we observe that since $\varphi=0$ near $z_0$, we can use 
\[T(e^{-2i\psi/h}\varphi f)=\demi ih[e^{-2i\psi/h}\frac{\varphi f}{\bar{\pl}\psi}-T(e^{-2i\psi/h}\bar{\pl}(\frac{\varphi f}{\bar{\pl}\psi}))]\] 
and the boundedness of $T$ on $L^q$ to deduce that for any $q<2$
\begin{equation}\label{varphi}
\begin{split}
||\chi T(\varphi e^{-2i\psi/h}f)||_{L^q(\cc)} 
\leq & Ch \Big(||\frac{\varphi f}{\bar{\pl}\psi}||_{L^q} +
||\frac{f\bar{\pl}\varphi}{\bar{\pl}\psi}||_{L^q}+||\frac{\varphi \bar{\pl}f}{\bar{\pl}\psi}||_{L^q}+
||\frac{f\varphi}{(\bar{\pl}\psi)^2}||_{L^q}
\Big).
\end{split}\end{equation}
The first term is clearly bounded by $\delta^{-1}\|f\|_{L^\infty}$ due to the fact that $\psi$ is Morse. For the last term, observe that since $\psi$ is Morse, $\frac{1}{|\partial \psi|} \leq \frac{c}{|z-z_0|}$ near $z_0$, therefore
\[||\frac{f\varphi}{(\bar{\pl}\psi)^2}||_{L^q} \leq C\|f\|_{L^\infty} (\int_\delta^1 r^{1-2q}dr)^{1/q} \leq C \delta^{\frac{2}{q} -2}\|f\|_{L^\infty}.\]
The second term can be bounded by $||\frac{f\bar{\pl}\varphi}{\bar{\pl}\psi}||_{L^q}\leq\|f\|_{L^\infty} ||\frac{\bar{\pl}\varphi}{\bar{\pl}\psi}||_{L^q}$. Observe that while $\|\frac{\bar{\pl}\varphi}{\bar{\pl}\psi}\|_{L^\infty}$ grows like $\delta^{-2}$, $\bar{\pl}\varphi$ is only supported in a neighbourhood of radius $2\delta$. Therefore we obtain
\[||\frac{f\bar{\pl}\varphi}{\bar{\pl}\psi}||_{L^q} \leq \delta^{2/q -2} \|f\|_{L^\infty}.\]
The third term can be estimated by
\[||\frac{\varphi \bar{\pl}f}{\bar{\pl}\psi}||_{L^q}\leq C||\bar{\pl}f||_{L^p} ||\frac{\varphi}{\bar{\pl}\psi}||_{L^\infty}\leq 
C\delta^{-1}||\bar{\pl}f||_{L^p}. \] 
Combining these four estimates with \eqref{varphi} we obtain
\[||\chi T(\varphi e^{-2i\psi/h}f)||_{L^q(\cc)}\leq h\|f\|_{W^{1,p}}(\delta^{-1} + \delta^{2/q -2}).\] 
Combining this and \eqref{1-varphi} and optimizing by taking $\delta=h^{1/3}$, we deduce that 
\begin{equation}
\label{singular part q<2}
||\chi T( e^{-2i\psi/h}f)||_{L^q(\cc)}\leq h^{2/3}\|f\|_{W^{1,p}}
\end{equation}
if $q<2$.
We now move on to the smoothing part given by $K(e^{-2i\psi/h}f)$. Take $\chi$ to be a compactly supported function in $\Omega$ such that it is equal to $1$ on the support of $f$, we see that $K(e^{2i\psi/h}f) = K(e^{-2i\psi/h}(f - \chi f(z_0)) + f(z_0)K(e^{-2i\psi/h}\chi)$. By applying stationary phase, we easily see that $\|f(z_0)K(e^{-2i\psi/h}\chi)\|_{L^q} \leq Ch \|f\|_{C^0}$ for any $q\in [1,\infty]$. 
For the first term, we write $\til{f}:=f-\chi f(z_0)$ and we integrate by parts to get, for some smoothing operator $K'$ 
\[K(e^{-2i\psi/h}\til{f}) = hK'(e^{-2i\psi/h}\til{f}) +\frac{h}{2i} K\Big(e^{-2i\psi/h}\pl_z\Big(\frac{\til{f}}{\pl_z\psi}\Big)\Big).\]
By the fact that $K$ and $K'$ are smoothing, we see that for all $k\in \nn$
\[\|K(e^{2i\psi/h}\til{f})\|_{C^k} \leq h C\Big(\|f\|_{L^\infty} + \Big\|\pl_z\Big(\frac{\til{f}}{\pl_z\psi}\Big)\Big\|_{L^1}\Big)\]
Using the fact that $\psi$ is Morse, the Sobolev embedding $W^{1,p}\subset C^{\alpha}$ for $\alpha=1-2/p$ 
and  $\til{f}(z_0)=0$, we can estimate the last term by $C \|f\|_{W^{1,p}}$ if $p>2$. Therefore,
\begin{eqnarray}
\label{smoothing estimate}
\|K(e^{2i\psi/h}f)\|_{L^q} \leq C h\|f\|_{W^{1,p}}
\end{eqnarray}
for any $q\in [1,\infty]$ and $p>2$. Combining \eqref{smoothing estimate} and \eqref{singular part q<2} we see that \eqref{q<2} is established.\\

Let us now turn our attention to the case when $\infty>q\geq 2$, one can use the boundedness of $T$ on $L^q$ and thus 
\begin{equation}\label{casq>2}
||\chi T((1-\varphi)e^{-2i\psi/h}f)||_{L^q(\cc)}\leq  ||(1-\varphi)e^{-2i\psi/h}f||_{L^q(\Omega)}\leq C\delta^{\frac{2}{q}}||f||_{L^\infty}.
\end{equation}
Now since $\varphi=0$ near $z_0$, we can use 
\[T(e^{-2i\psi/h}\varphi f)=
\demi ih[e^{-2i\psi/h}\frac{\varphi f}{\pl_{\bar{z}}\psi}-T(e^{-2i\psi/h}\pl_{\bar{z}}(\frac{\varphi f}{\pl_{\bar{z}}\psi}))]\] 
and the boundedness of $T$ on $L^q$ to deduce that for any $q\leq p$, \eqref{varphi} holds again with all the terms satisfying the same estimates as before so that 
\[\|T(e^{-2i\psi/h}\varphi f)\|_{L^q} \leq Ch\|f\|_{W^{1,p}}(\delta^{2/q -2} + \delta^{-1}) \leq Ch\delta^{2/q-2}\|f\|_{W^{1,p}} \]
since now $q \geq 2$. Now combine the above estimate with \eqref{casq>2} and take $\delta = h^\demi$ we get
\[\|T(e^{-2i\psi/h} f)\|_{L^q} \leq h^{1/q} \|f\|_{W^{1,p}}\] 
for $2 \leq q\leq p$. The smoothing operator $K$ is controlled by \eqref{smoothing estimate} for all $q\in [1,\infty]$ and therefore we obtain \eqref{q>=2}.
\end{proof}

Observe that the adjoint $\mc{R}^*:L^2(M_0)\to L^2(M)$ of $\mc{R}:L^2(M)\to L^2(M_0)$ is simply given by  
$\mc{R}^*f=\indic_{M_0}f$ where $\indic_{M_0}$ is the characteristic function of $M_0$ in $M$. 
In particular $\mc{R}^*V\in W^{1,p}(M)$ for $p>2$ if $V|_{\pl M_0}=0$ and $V\in W^{1,p}(M_0)$. By Proposition \ref{cauchy riemann}, 
the operator $(\bar{\pl}^{-1})^*$ satisfies 
\[\chi(\bar{\pl}^{-1})^*=\sum_j \chi_j\bar{T}^*\hat{\chi}_j+K^*\] 
where $K^*$ has a smooth integral  kernel on $M\x M$ and $\bar{T}^*=T$. The proof of Lemma \ref{estimate1} can then be 
applied in the same way to  deduce that for $v\in W^{1,p}(M_0)$ with $v|_{\pl M_0}=0$ 
\begin{equation}\label{estadj}
||\mc{E}^* (\bar{\pl}^{-1})^*R^*(e^{-2i\psi/h}v)||_{L^2(T^*_{0,1}M_0)}\leq Ch^{\demi+\eps}||v||_{W^{1,p}(M_0)}
\end{equation}
where we used that $\chi\mc{E}=\mc{E}$ if $\chi=1$ on $M_\delta$.
The same following estimate also holds if $w\in W^{1,p}(M_0, T_{0,1}^*M_0)$ with $w|_{\pl M_0}=0$
\begin{equation}\label{estadj2}
||\mc{E}^* (\bar{\pl}^{*-1})^*R^*(e^{2i\psi/h}w)||_{L^2(M_0)}\leq Ch^{\demi+\eps}||w||_{W^{1,p}(M_0,T_{0,1}^*M_0)}.
\end{equation}

Similarly, let $\mc{R}$ also denotes the restriction of section of $T^*_{0,1}M$ to 
$M_0$ and if $\mc{E}$ is an extension map from $M_0$ to $M$ which is continuous
from $W^{k,p}(M_0)$ to $W^{k,p}_c(M)$ for $k=0,1$ and with range some functions 
having support in $M_\delta$. One has 
\begin{lemma}
Let $\psi$ be a smooth real valued Morse function on $M$ and let 
$\bar{\pl}_\psi^{*-1}:=\mc{R}\bar{\pl}^{*-1}e^{2i\psi/h}\mc{E}$ where $\bar{\pl}^{*-1}$ 
is the right inverse  constucted in Proposition \ref{inverseadjoint} for $\bar{\pl}^*:W^{1,p}(M,T^*_{0,1}M)\to L^p(M)$. 
Let $q\in(1,\infty)$ and $p>2$,  then there exists $C>0$ independent of $h$ such that for all 
$\omega\in W^{1,p}(M_0)$  
\begin{equation}\label{q<2 2} 
||\bar{\pl}^{*-1}_\psi \omega||_{L^q(M_0,T^*_{0,1}M_0)}\leq Ch^{2/3}||\omega||_{W^{1,p}(M_0)} \, \, \,{\rm if }\, 1 \leq q<2 
\end{equation}
\begin{equation}\label{q>=2 2}
||\bar{\pl}^{*-1}_\psi \omega||_{L^q(M_0,T^*_{0,1}M_0)}\leq Ch^{1/q}||\omega||_{W^{1,p}(M_0)} \,\,\, {\rm if }\, 2\leq q\leq p.
\end{equation}
There exists $\eps>0$ and $C>0$ such that for all $\omega\in W_c^{1,p}(M_0)$
\begin{equation}\label{q=2 2}
||\bar{\pl}^{*-1}_\psi \omega||_{L^2(M_0,T^*_{0,1}M_0)}\leq Ch^{\demi+\eps}||\omega||_{W^{1,p}(M_0)}. 
\end{equation}
\end{lemma}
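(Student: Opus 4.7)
The plan is to follow the proof of Lemma \ref{estimate1} almost verbatim, exploiting that the local model $T$ for $\bar{\pl}^{*-1}$ supplied by Lemma \ref{inverseadjoint} is, up to a form factor $d\bar z$, the complex conjugate of the model $\bar T$ for $\bar{\pl}^{-1}$ used earlier, while the phase has switched sign from $e^{-2i\psi/h}$ to $e^{2i\psi/h}$. Modulo a smoothing operator $K$, Lemma \ref{inverseadjoint} gives $\bar{\pl}^{*-1}\chi=\sum_j\hat\chi_j T\chi_j+K$, so after extending $\omega$ to $M$, applying a partition of unity subordinate to complex charts, and absorbing the volume coefficient $\alpha^2$ into $f$, one is reduced to estimating $\chi(z)T(e^{2i\psi/h}f)(z)$ in $L^q$, with $f\in W^{1,p}$ compactly supported in a bounded $\Omega\subset\cc$ containing at most one critical point $z_0$ of $\psi$ (further partitioning if needed).

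I would then introduce the cutoff $\varphi\in C_0^\infty(\cc)$ equal to $1$ for $|z-z_0|\geq 2\delta$ and vanishing on $|z-z_0|\leq\delta$, with $\delta=\delta(h)$ to be optimized. On $\mathrm{supp}(1-\varphi)$ the $L^q$ norm is controlled by Minkowski's inequality for $q<2$ and by the $L^q$ boundedness of $T$ for $q\geq 2$, yielding $\delta^2\|f\|_{L^\infty}$ and $\delta^{2/q}\|f\|_{L^\infty}$ respectively, as in \eqref{1-varphi} and \eqref{casq>2}. On $\mathrm{supp}(\varphi)$ I would integrate by parts via
\[
e^{2i\psi/h}=-\frac{ih}{2\,\pl_{z'}\psi}\pl_{z'}\bigl(e^{2i\psi/h}\bigr),
\]
which is legitimate since the kernel $(\bar z-\bar z')^{-1}$ is holomorphic in $z'$ off the diagonal and $\varphi$ vanishes near $z_0$. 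This produces the analogue of \eqref{varphi},
\[
T\bigl(\varphi e^{2i\psi/h}f\bigr)=-\tfrac{ih}{2}\Bigl[e^{2i\psi/h}\tfrac{\varphi f}{\pl\psi}-T\bigl(e^{2i\psi/h}\pl\bigl(\tfrac{\varphi f}{\pl\psi}\bigr)\bigr)\Bigr]d\bar z,
\]
now with $\pl\psi$ in place of $\bar\pl\psi$. Since $\psi$ is real and Morse, $|\pl\psi|=|\bar\pl\psi|\geq c|z-z_0|$ near $z_0$, so the four resulting pieces obey the same bounds as in Lemma \ref{estimate1}. Optimizing $\delta=h^{1/3}$ for $q<2$ and $\delta=h^{1/2}$ for $2\leq q\leq p$ then delivers \eqref{q<2 2} and \eqref{q>=2 2} up to the smoothing remainder.

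The smoothing contribution $K(e^{2i\psi/h}f)$ is handled precisely as in \eqref{smoothing estimate}: decompose $f=(f-\chi f(z_0))+\chi f(z_0)$ with $\chi\equiv 1$ on $\mathrm{supp}(f)$, evaluate the constant piece by stationary phase (which is $O(h)$ in every $L^q$), and integrate by parts once in the remaining phase integral, using the Morse condition together with the Sobolev embedding $W^{1,p}\subset C^{1-2/p}$ for $p>2$ to bound $\pl_{z'}\bigl((f-\chi f(z_0))/\pl_{z'}\psi\bigr)$ in $L^1$. The $L^2$ estimate \eqref{q=2 2} then follows by interpolating \eqref{q<2 2} at some $q_1<2$ with \eqref{q>=2 2} at $q=p>2$. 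The only delicate point, and the step I expect to require the most care, is the integration by parts: the reversal of the phase sign and the complex conjugation of the Cauchy kernel replace $\bar\pl$ throughout by $\pl$, but because $|\pl\psi|^{-1}$ and $|\bar\pl\psi|^{-1}$ are controlled identically under the Morse hypothesis, no new analytic input beyond careful bookkeeping is needed.
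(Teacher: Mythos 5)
Your proposal is correct and follows exactly the route the paper intends: the paper's own proof of this lemma is literally ``the same as the proof of Lemma \ref{estimate1}'', and you have carried out precisely that adaptation, with the conjugated Cauchy kernel, the reversed phase, the integration by parts in $\pl_z$ instead of $\bar\pl_z$, and the identical treatment of the cutoff, smoothing, and interpolation steps. The only nitpick is terminological: the kernel $(\bar z-\bar z')^{-1}$ is annihilated by $\pl_{z'}$ (it is antiholomorphic in $z'$), which is exactly what justifies your integration by parts, so the substance is fine.
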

\begin{proof} The proof is exactly the same as the proof of Lemma \ref{estimate1}. We do not repeat it.
\end{proof}

\end{section}

\begin{section}{Solutions to First Order Systems}\label{firstorder}
Let $M_0$ be a surface with boundary included strictly in $M$ (for instance a deformation retract of $M$)  
and let $\Phi = \phi + i\psi$ be a Morse holomorphic function on $M$.
Such $\Phi$ exist by Corollary 2.2. in \cite{GT}. 
We shall denote respectively by 
\[V = \begin{pmatrix}{v}&{0}\cr{0}&{v'}\end{pmatrix} \,\,\, \textrm{ and }D=\begin{pmatrix}{0}&{\bar{\pl}^*}\cr{\bar{\pl}}&{0}\end{pmatrix}\]
the matrix potential where $v,v'\in W^{1,p}(M_0)$ (with $p>2$) are complex valued and the Dirac type operator, acting on sections of  
the bundle $\Sigma:=\Lambda^0(M_0)\oplus\Lambda^{0,1}(M_0)$ over $M_0$. 
In this section, we will construct geometric optic solutions $F \in W^{1,p}(\Sigma)$ (also called Faddeev type solutions) 
which solve the equation 
\[(D+V)F=0\]  
on $M_0$.
It is clear that 
\[D=\left(\begin{array}{cc}
 e^{-\bar{\Phi}/h} & 0\\
0 & e^{-\Phi/h} 
\end{array}\right)D\left(\begin{array}{cc}
 e^{\Phi/h} & 0\\
0 & e^{\bar{\Phi}/h} 
\end{array}\right)\] 
and thus 
\[\begin{gathered}\left(\begin{array}{cc}
 e^{-\bar{\Phi}/h} & 0\\
0 & e^{-\Phi/h} 
\end{array}\right)(D+V)\left(\begin{array}{cc}
 e^{\Phi/h} & 0\\
0 & e^{\bar{\Phi}/h} 
\end{array}\right)= D+V_\psi\,,\\ 
\, V_\psi:=\left(\begin{array}{cc}
 e^{2i\psi/h}v & 0\\
0 & e^{-2i\psi/h}v' 
\end{array}\right).\end{gathered}\]
We will then construct solutions $F_h$ of $(D+V_\psi)F_h=0$ which have the form  
\[F_h=\left(\begin{array}{c}
a + r_h\\
b +s_h 
\end{array}\right)=:A+Z_h\]
where $a$ is some holomorphic functions on $M$, $b$ some anti-holomorphic $1$-form 
and $(r_h,s_h)\in W^{1,p}(\Sigma)$ which decays appropriately as $h\to 0$. 
In particular, we need to solve the system 
\[(D+V_\psi)Z_h=-V_\psi A=-\left(\begin{array}{c}
 e^{2i\psi/h}va \\
e^{-2i\psi/h} v'b
\end{array}\right).\]
Let us define the operators $D^{-1}$ and  $D_\psi^{-1}$ acting on  $\Lambda^{0}(M_0)\oplus \Lambda^{0,1}(M_0)$ by 
\[D^{-1}:=\left(\begin{array}{cc}
0 & \mc{R}\bar{\pl}^{-1}\mc{E}\\
\mc{R}\bar{\pl}^{*-1}\mc{E}& 0 
\end{array}\right),\quad 
\, \, \, D_\psi^{-1}:= \left(\begin{array}{cc}
0 & \bar{\pl}^{-1}_\psi\\
\bar{\pl}_\psi^{*-1}& 0 
\end{array}\right)\]
which satisfy $DD^{-1}={\rm Id}$ on $L^q(M_0)$ for all $q\in (1,\infty)$ 
and $DD^{-1}_\psi V=V_\psi$.
To construct $Z_h$ solving $(D+V_\psi)Z_h=-V_\psi A$ in $M_0$, it then suffices to solve
\[(I+ D_\psi^{-1}V)Z_h=-D_\psi^{-1}VA.\]
Writing the components of this system explicitly we get
\begin{equation}
\label{s-r equation}
\left\{\begin{array}{ll}
r_h + \bar{\pl}_\psi^{-1}(v's_h) = -\bar{\pl}^{-1}_\psi (v'b)\\
s_h +\bar{\pl}_\psi^{*-1}(vr_h) = -\bar{\pl}_\psi^{*-1}(va)
\end{array}\right..
\end{equation}
Observe that since we are allowed to choose any holomorphic function $a$ and anti-holomorphic 1-form $b$, we may set $a=0$ in \eqref{s-r equation} and solve for $r_h$ to get
\begin{eqnarray}
\label{solve for r}
(I - S_h) r_h = -\bar{\pl}_\psi^{-1}(v' b) \,\,\, \textrm{ with }S_h:= \bar{\pl}_\psi^{-1}v'\bar{\pl}_\psi^{*-1}v .
\end{eqnarray}
where $v,v'$ are viewed as multiplication operators. 
We have the following lemma:
\begin{lemma}\label{normestim}
Let $p>2$ and assume that $v\in L^\infty(M_0)$ and $v'\in W^{1,p}(M_0)$, 
then $S_h$ is bounded on $L^r(M_0)$ for any $1<r\leq p$ and satisfies $||S_h||_{L^r\to L^r}=O(h^{1/r})$ if 
$r>2$ and $||S_h||_{L^2\to L^2}=O(h^{1/2-\eps})$ for any $0<\eps<1/2$ small.
\end{lemma}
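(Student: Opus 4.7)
The strategy is to write $S_h f = \bar{\pl}_\psi^{-1}\bigl(v' \cdot \bar{\pl}_\psi^{*-1}(v f)\bigr)$ and trace regularity and $h$-decay through each factor of the composition, combining uniform mapping properties of the Cauchy--Riemann inverses (Proposition \ref{cauchy riemann} and Lemma \ref{inverseadjoint} imply, since $e^{\pm 2i\psi/h}$ is an $L^r$-isometry, that $\bar{\pl}_\psi^{-1}$ and $\bar{\pl}_\psi^{*-1}$ are bounded from $L^r$ to $W^{1,r}$ uniformly in $h$ for every $1 < r < \infty$) with the small-$h$ estimates of Lemma \ref{estimate1}.

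The plan is to first establish uniform $L^r$-boundedness for $1 < r \leq p$: given $f \in L^r(M_0)$, since $v \in L^\infty$ one has $vf \in L^r$, so $g := \bar{\pl}_\psi^{*-1}(vf) \in W^{1,r}$ with bound independent of $h$; since $v' \in W^{1,p}$ with $p > 2$ gives $v' \in L^\infty$, the product $v' g$ lies in $L^r$; finally $\bar{\pl}_\psi^{-1}(v'g) \in W^{1,r} \hookrightarrow L^r$ uniformly. For the $h^{1/r}$-decay when $2 < r \leq p$, one exploits the Sobolev embedding $W^{1,r} \hookrightarrow L^\infty$, valid since $r > 2$, which forces $g \in L^\infty$. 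Writing $\nabla(v'g) = (\nabla v')g + v'\nabla g$, the first term lies in $L^p$ (as $g$ is bounded and $\nabla v' \in L^p$) hence in $L^r$ on the bounded $M_0$ because $r \leq p$, while the second lies in $L^r$ since $v'$ is bounded. Thus $\|v'g\|_{W^{1,r}(M_0)} \leq C\|f\|_{L^r(M_0)}$, and invoking Lemma \ref{estimate1} with its free parameter ``$p$'' replaced by the present $r > 2$ (the proof only requires that this parameter exceed $2$) yields $\|\bar{\pl}_\psi^{-1}(v'g)\|_{L^r} \leq C h^{1/r}\|v'g\|_{W^{1,r}} \leq C h^{1/r}\|f\|_{L^r}$.

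For the $L^2 \to L^2$ bound the same bootstrap fails, because in dimension two $W^{1,2}$ does not embed into $L^\infty$, so multiplication by $v'$ no longer returns a function lying in any $W^{1,r}$ with $r > 2$. Instead the plan is to interpolate: fix a small $\delta \in (0, p-2]$ and a small $\delta' > 0$; by the previous step $\|S_h\|_{L^{2+\delta} \to L^{2+\delta}} \leq Ch^{1/(2+\delta)}$, while by uniform boundedness $\|S_h\|_{L^{1+\delta'} \to L^{1+\delta'}} \leq C$. Riesz--Thorin interpolation at the value $\theta$ determined by $1/2 = (1-\theta)/(2+\delta) + \theta/(1+\delta')$ gives
\[\|S_h\|_{L^2 \to L^2} \leq C_{\delta, \delta'}\, h^{(1-\theta)/(2+\delta)}.\]
A direct computation yields $(1-\theta)/(2+\delta) = (1-\delta')/(2(1+\delta-\delta'))$, which tends to $1/(2(1+\delta))$ as $\delta' \to 0^+$ and thus exceeds $1/2 - \eps$ as soon as $\delta$ is chosen sufficiently small depending on $\eps$. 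This delivers the claimed $h^{1/2-\eps}$ rate for any small $\eps \in (0, 1/2)$.

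The main obstacle is precisely this last interpolation step: the direct bootstrap succeeds cleanly for every $r \neq 2$, but at $r = 2$ the endpoint Sobolev embedding $W^{1,2} \hookrightarrow L^\infty$ fails in two dimensions, and one is forced to lose an arbitrarily small amount from a sharp $h^{1/2}$ rate by interpolating between $L^r$ estimates on either side of $2$.
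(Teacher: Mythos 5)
Your proof is correct and follows essentially the same route as the paper: uniform $L^r\to W^{1,r}$ bounds for the conjugated inverses give $O(1)$ boundedness, the $W^{1,r}$ regularity of $v'\bar{\pl}_\psi^{*-1}(vf)$ combined with Lemma \ref{estimate1} (applied with Sobolev exponent $r>2$) gives the $O(h^{1/r})$ decay, and interpolation between exponents on either side of $2$ yields the $O(h^{1/2-\eps})$ bound at $r=2$, exactly as in the paper (which interpolates between $r=1+\eps$ and $r=2+\eps$). You merely spell out the product-rule and Sobolev-embedding details that the paper leaves implicit.
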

\begin{proof}
First, notice that $\bar{\pl}_\psi^{*-1}$ maps $L^r(M_0)$ to $W^{1,r}(M_0,T^*_{0,1}M_0)$ with norm $O(1)$ as $h\to 0$
by (iii) in Lemma \ref{inverseadjoint} and the properties of $\mc{E},\mc{R}$. Therefore, if $v'\in W^{1,p}(M_0)$ and $v\in L^\infty(M_0)$, the operator 
$v'\bar{\pl}_\psi^{*-1}v$ maps $L^r$ to $W^{1,r}(M_0,T^*_{0,1}M_0)$ with norm $O(1)$ for $r\leq p$ and 
Lemma \ref{estimate1} can be used  to deduce that  $S_h$ maps $L^r$ to $L^r$ with norm $O(h^{1/r})$ if $r>2$.
If $r<2$, $v'\bar{\pl}_\psi^{*-1}$ maps $L^r(M_0)$ to $L^r(T^*_{0,1}M_0)$ with norm $O(1)$, and $\bar{\pl}^{-1}_\psi$ maps
$L^r(T^*_{0,1}M_0)$ to $L^r(M_0)$ with norm $O(1)$, and thus $S_h$ is bounded on $L^r(M_0)$ with norm $O(1)$.
For all $\eps>0$ small, interpolating between $r=1+\eps$ and $r=2+\eps$, gives the desired result for $r=2$. 
\end{proof}
In view of Lemma \ref{normestim}, equation \eqref{solve for r} can be solved by using Neumann series by setting (for small $h>0$)
\begin{equation}
\label{r neumann}
r_h := -\sum\limits_{j = 0}^\infty S_h^j\bar{\pl}_\psi^{-1}v' b
\end{equation}
as an element of any $L^q(M_0)$ for $q\geq 2$.
Substituting this expression for $r$ into equation \eqref{s-r equation} when $a=0$, we get that
\begin{equation}\label{shnorm}
s_h = -\bar{\pl}_\psi^{*-1}vr_h.
\end{equation}
We now derive the asymptotics in $h$ for $s_h$ and $r_h$.
\begin{lemma}
\label{s-r decay}
If $v\in L^\infty(M_0)$ and $v'\in W^{1,p}(M_0)$ for some $p>2$, then there exists $\epsilon >0$ such that
\[\|s_h\|_{L^2(M_0)} + \|r_h\|_{L^{2}(M_0)}=O(h^{\demi+\epsilon})\]
\end{lemma}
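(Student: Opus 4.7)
The plan is to derive the bound from the Neumann series representation \eqref{r neumann} combined with the $L^2$-decay estimates from Lemma \ref{estimate1}. By Lemma \ref{normestim}, for $h$ sufficiently small the operator norm $\|S_h\|_{L^2\to L^2}$ is strictly less than $1$, so the Neumann series defining $r_h$ converges in $L^2(M_0)$ and
\[
\|r_h\|_{L^2(M_0)}\leq \frac{1}{1-\|S_h\|_{L^2\to L^2}}\|\bar{\pl}_\psi^{-1}(v'b)\|_{L^2(M_0)} \leq C\|\bar{\pl}_\psi^{-1}(v'b)\|_{L^2(M_0)}.
\]
Since $b$ is a fixed smooth anti-holomorphic $1$-form and $v'\in W^{1,p}(M_0)$, the product $v'b$ lies in $W^{1,p}(M_0,T_{0,1}^*M_0)$ with $h$-independent norm. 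Applying the $L^2$ estimate \eqref{q=2} of Lemma \ref{estimate1} (or deriving it directly by interpolating the $L^{2-\delta}$ bound \eqref{q<2} of rate $h^{2/3}$ with the $L^{2+\delta}$ bound \eqref{q>=2} of rate $h^{1/(2+\delta)}$) yields $\|\bar{\pl}_\psi^{-1}(v'b)\|_{L^2}=O(h^{1/2+\epsilon})$ for some $\epsilon>0$, establishing the claim for $r_h$.

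For $s_h$, the second equation in \eqref{s-r equation} with $a=0$ reads $s_h=-\bar{\pl}_\psi^{*-1}(vr_h)$. Since $\bar{\pl}^{*-1}$ maps $L^2(M)$ continuously into $W^{1,2}(M,T^*_{0,1}M)$ by Lemma \ref{inverseadjoint}(iii), and multiplication by $e^{2i\psi/h}$ is an isometry on $L^2$, the operator $\bar{\pl}_\psi^{*-1}\colon L^2(M_0)\to L^2(M_0,T^*_{0,1}M_0)$ is bounded uniformly in $h$. Combined with $v\in L^\infty(M_0)$, this gives $\|s_h\|_{L^2}\leq C\|v\|_{L^\infty}\|r_h\|_{L^2}=O(h^{1/2+\epsilon})$, which together with the bound on $r_h$ finishes the argument.

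The main obstacle is upgrading the naive rate $h^{1/2}$ (which can be read off directly from \eqref{q>=2} at $q=2$) to $h^{1/2+\epsilon}$. This extra gain traces back to the Morse condition on $\psi$: near each critical point, the cutoff-plus-integration-by-parts argument in the proof of Lemma \ref{estimate1} produces an improved $O(h^{2/3})$ estimate in $L^q$ for $q<2$, and interpolation then transfers this improvement to $L^2$. Contributions from the extension operator $\mc{E}$ near $\pl M_0$ are harmless because $\psi$ can be chosen with no critical points outside $M_0$, so non-stationary phase gives $O(h)$ decay there and does not interfere with the decay rate.
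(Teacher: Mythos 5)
Your proposal is correct and follows essentially the same route as the paper's own proof: you bound $\|r_h\|_{L^2}$ by summing the Neumann series \eqref{r neumann} using the smallness of $\|S_h\|_{L^2\to L^2}$ from Lemma \ref{normestim} together with the $O(h^{1/2+\epsilon})$ estimate \eqref{q=2} of Lemma \ref{estimate1} applied to $\bar{\pl}_\psi^{-1}(v'b)$, and then bound $\|s_h\|_{L^2}$ via \eqref{shnorm} using the uniform $L^2$-boundedness of $\bar{\pl}_\psi^{*-1}$ and $v\in L^\infty$. This matches the paper's argument, so nothing further is needed.
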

\begin{proof}
The statement for $r_h$ is an easy consequence of Lemma \ref{estimate1} and \ref{normestim}: 
indeed $||\bar{\pl}^{-1}_\psi v'b||_{L^2}=O(h^{\demi+\eps})$ 
by Lemma \ref{estimate1} and $||S_h||_{L^2\to L^2}=O(h^{\demi-\eps})$ thus $||r_h||_{L^2}=O(h^{\demi+\eps})$.
The estimate for $s_h$ comes from the fact that $||\bar{\pl}^{*-1}_\psi||_{L^2\to L^2}=O(1)$ and \eqref{shnorm}. 
\end{proof}
The same method can clearly be used by setting $b=0$ and solving for $s_h$ first.
We summarize the results of this section into the following proposition
\begin{proposition}
\label{first order CGO a = 0}
Let $\Phi = \phi + i\psi$ be a Morse holomorphic function on $M$, and 
$b$ an anti-holomorphic 1-form on $M$. If $v\in L^\infty(M_0)$ and $v'\in W^{1,p}(M_0)$ for some $p>2$, then
there exist solutions to $(D+V)F= 0$ on $M_0$ of the form
\begin{equation}\label{Fh} 
F_h = \begin{pmatrix}{e^{\Phi/h} r_h}\cr{e^{\bar\Phi/h}(b + s_h)}\end{pmatrix}
\end{equation}
where $\|s_h\|_{L^2(M_0)} + \|r_h\|_{L^{2}(M_0)}=O(h^{\demi+\eps})$ for some $\eps>0$. 
If conversely  $v'\in L^\infty(M_0)$ and $v\in W^{1,p}(M_0)$ for some $p>2$, then
there exist solutions to $(D+V)G= 0$ on $M_0$ of the form
\begin{equation} \label{Gh}
G_h = \begin{pmatrix}{e^{\Phi/h} (a+r_h)}\cr{e^{\bar\Phi/h}s_h}\end{pmatrix}
\end{equation}
where $\|s_h\|_{L^2(M_0)} + \|r_h\|_{L^{2}(M_0)}=O(h^{\demi+\eps})$ for some $\eps>0$. 
\end{proposition}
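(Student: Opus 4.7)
The plan is to assemble the construction spelled out in the preceding discussion into a clean proof. First I would conjugate the system by $\textrm{diag}(e^{\Phi/h}, e^{\bar\Phi/h})$ as already done in the section, converting $(D+V)F=0$ into $(D+V_\psi)Z_h = -V_\psi A$ with $A = (a,b)^T$ and $F = \textrm{diag}(e^{\Phi/h}, e^{\bar\Phi/h})(A+Z_h)$. Applying $D_\psi^{-1}$ and using that $D D_\psi^{-1} V = V_\psi$ on $M_0$, it suffices to solve the integral equation $(I + D_\psi^{-1} V)Z_h = -D_\psi^{-1}VA$.

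For the first statement I set $a=0$. Writing the two components as in \eqref{s-r equation}, the second line becomes $s_h = -\bar{\pl}_\psi^{*-1}(v r_h)$, and substitution into the first line produces the scalar fixed-point equation $(I - S_h)r_h = -\bar{\pl}_\psi^{-1}(v'b)$ with $S_h = \bar{\pl}_\psi^{-1} v' \bar{\pl}_\psi^{*-1} v$ as in \eqref{solve for r}. By Lemma \ref{normestim} one has $\|S_h\|_{L^2 \to L^2} = O(h^{1/2 - \eps})$, so for $h$ small enough $I - S_h$ is invertible on $L^2(M_0)$ and $r_h$ is given by the convergent Neumann series \eqref{r neumann}.

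The decay bounds follow from what we already proved. Since $v' \in W^{1,p}(M_0)$ with $p>2$ and $b$ is a smooth anti-holomorphic $1$-form, $v'b \in W^{1,p}(M_0, T^*_{0,1}M_0)$, and Lemma \ref{estimate1} (via \eqref{q=2}, using the extension operator built into $\bar{\pl}_\psi^{-1}$) gives $\|\bar{\pl}_\psi^{-1}(v'b)\|_{L^2} = O(h^{1/2+\eps})$. Combined with the Neumann series bound this yields $\|r_h\|_{L^2} = O(h^{1/2+\eps})$, as recorded in Lemma \ref{s-r decay}. Plugging this into $s_h = -\bar{\pl}_\psi^{*-1}(v r_h)$ and using $v \in L^\infty(M_0)$ together with the $L^2 \to L^2$ boundedness of $\bar{\pl}_\psi^{*-1}$ (Proposition \ref{inverseadjoint} (iii) applied with $q=2$), I get $\|s_h\|_{L^2} = O(h^{1/2+\eps})$, which is the first claim.

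For the $G_h$ statement I proceed symmetrically: set $b=0$ and solve the first line of \eqref{s-r equation} as $r_h = -\bar{\pl}_\psi^{-1}(v' s_h)$, which reduces the system to $(I - \widetilde{S}_h) s_h = -\bar{\pl}_\psi^{*-1}(va)$ with $\widetilde{S}_h = \bar{\pl}_\psi^{*-1} v \bar{\pl}_\psi^{-1} v'$, now with $v \in W^{1,p}$ and $v' \in L^\infty$. The analogues of Lemmas \ref{normestim} and \ref{s-r decay}, obtained by applying the dual estimates \eqref{q<2 2}--\eqref{q=2 2} instead of \eqref{q<2}--\eqref{q=2}, give the same $O(h^{1/2-\eps})$ operator norm for $\widetilde{S}_h$ and the same $O(h^{1/2+\eps})$ $L^2$-bound for the source, so the Neumann series converges and delivers the desired $s_h$ and $r_h$. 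The only real obstacle in this program would be convergence of the Neumann series with the correct rate; but this is precisely what Lemmas \ref{estimate1} and \ref{normestim} were designed to supply, so no new analytic work is required.
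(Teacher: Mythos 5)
Your proposal is correct and follows essentially the same route as the paper: the paper's proof of this proposition is precisely the preceding construction (the conjugation to $D+V_\psi$, the reduction to $(I-S_h)r_h=-\bar{\pl}_\psi^{-1}(v'b)$ via \eqref{s-r equation}--\eqref{solve for r}, the Neumann series justified by Lemma \ref{normestim}, and the decay bounds of Lemma \ref{s-r decay}), with the $G_h$ case handled by the same symmetric argument ($b=0$, solve for $s_h$ first) that you spell out using the dual estimates. No gaps; your treatment of the symmetric case is just slightly more explicit than the paper's remark.
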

As a corollary we obtain the
\begin{theorem}\label{inverseresult}
Let $p>2$ and $V_j\in L^\infty(M_0,{\rm End}(\Sigma))$ be some complex  
diagonal endomorphisms of $\Sigma$ for $j=1,2$ with diagonal entries $v_j\in L^\infty(M_0) ,v'_j\in L^\infty(M_0)$. 
Denote by 
\[\mc{C}_{V_j}:=\{i_{\pl M_0}^*F\in H^{\demi}(\pl M_0;\Sigma);  F\in H^1(M_0,\Sigma), (D+V_j)F=0\}\] 
the Cauchy data space of $D+V_j$, where $i_{\pl M_0}: \pl M_0\to M_0$ denotes the natural inclusion map.\\
(i) If $v'_j\in W^{1,p}(M_0)$ and  $\mc{C}_{V_1}=\mc{C}_{V_2}$   
then $v'_1=v'_2$.\\
(ii) if  $v_j\in W^{1,p}(M_0)$ and  $\mc{C}_{V_1}=\mc{C}_{V_2}$   
then $v_1=v_2$.\\
As a consequence, if $V_j\in W^{1,p}(M_0,{\rm End}(\Sigma))$, 
and $\mc{C}_{V_1}=\mc{C}_{V_2}$, then $V_1=V_2$. 
\end{theorem}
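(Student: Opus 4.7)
The plan is to combine the standard integral identity coming from equality of the Cauchy data spaces with the Faddeev-type CGO solutions of Proposition~\ref{first order CGO a = 0}, and to extract pointwise information at any prescribed point of $M_0$ by complex stationary phase at a critical point of the imaginary part of a Morse holomorphic phase. First, take any $F_1\in W^{1,p}(M_0,\Sigma)$ with $(D+V_1)F_1=0$. Since $\mc{C}_{V_1}=\mc{C}_{V_2}$, there exists $F_2$ solving $(D+V_2)F_2=0$ with $F_2|_{\pl M_0}=F_1|_{\pl M_0}$; setting $w:=F_1-F_2$ one has $w|_{\pl M_0}=0$ and $(D+V_2)w=(V_2-V_1)F_1$. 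The self-adjointness of $D$, together with the vanishing of $w$ on $\pl M_0$ (which kills all boundary contributions in the pairing of $\bar\pl$ and $\bar\pl^*$), then yields
\[\int_{M_0}\langle (V_2-V_1)F_1,G\rangle_{\Sigma}\,dv_g=0\]
for every $G$ solving $(D+V_2^*)G=0$.

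For (i), fix $z_0\in M_0$ and use Corollary~2.2 of~\cite{GT} to choose a Morse holomorphic function $\Phi=\phi+i\psi$ on $M$ having $z_0$ as its unique critical point in $\bbar{M_0}$. Applying Proposition~\ref{first order CGO a = 0} with phase $\Phi$ (respectively with $-\Phi$, which is again Morse holomorphic) produces Faddeev-type solutions
\[F_1=\begin{pmatrix}e^{\Phi/h}r_h\\ e^{\bbar{\Phi}/h}(b+s_h)\end{pmatrix},\qquad G=\begin{pmatrix}e^{-\Phi/h}\til{r}_h\\ e^{-\bbar{\Phi}/h}(\til{b}+\til{s}_h)\end{pmatrix}\]
of $(D+V_1)F_1=0$ and $(D+V_2^*)G=0$ respectively, where $b,\til{b}$ are antiholomorphic $1$-forms to be chosen, and all four remainders are $O(h^{\demi+\eps})$ in $L^2$ by Lemma~\ref{s-r decay}. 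Substituting into the identity, the $\pm\Phi/h$ phases cancel modulo the oscillatory factors $e^{\pm 2i\psi/h}$. The $v_2-v_1$ term, which pairs the small factors $r_h$ and $\til{r}_h$, is $O(h^{1+2\eps})$; the $v_2'-v_1'$ contribution is
\[\int_{M_0}(v_2'-v_1')\,b\wedge\overline{\til{b}}\,e^{-2i\psi/h}\,+\,O(h^{1+\eps}),\]
the correction being controlled by Cauchy--Schwarz against the $L^2$-decay of $s_h,\til{s}_h$.

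Since $\psi=\im\Phi$ is harmonic, its Hessian at $z_0$ is non-degenerate with signature $(1,1)$, and complex stationary phase gives
\[\int_{M_0}(v_2'-v_1')\,b\wedge\overline{\til{b}}\,e^{-2i\psi/h}= c_0\,h\,(v_2'-v_1')(z_0)\,b(z_0)\wedge\overline{\til{b}(z_0)}+o(h)\]
with $c_0\ne 0$. Multiplying the integral identity by $h^{-1}$ and letting $h\to 0$ therefore forces $(v_2'-v_1')(z_0)\,b(z_0)\wedge\overline{\til{b}(z_0)}=0$. Choosing $b,\til{b}$ so that $b(z_0)\wedge\overline{\til{b}(z_0)}\ne 0$ and varying $z_0\in M_0$ proves $v_1'=v_2'$ on $M_0$, which is (i).

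Part (ii) is symmetric: the current hypotheses allow me to use the $G_h$ part of Proposition~\ref{first order CGO a = 0}, whose leading terms sit in the \emph{first} component, with opposite-sign phases $\pm\Phi$ for the $(D+V_1)$- and $(D+V_2^*)$-solutions; the same stationary phase computation then extracts $v_1(z_0)=v_2(z_0)$. Under the joint hypothesis $V_j\in W^{1,p}(M_0,\mathrm{End}(\Sigma))$ both parts apply simultaneously, yielding $V_1=V_2$. The main technical point throughout is that after stationary phase the principal term is only $O(h)$, so every error contribution must be shown to be $o(h)$; this is precisely the purpose of the $h^{\demi+\eps}$ $L^2$-decay of $r_h,s_h,\til r_h,\til s_h$ furnished by Lemma~\ref{s-r decay}, combined with Cauchy--Schwarz on the cross-terms involving the $W^{1,p}$ potential factor.
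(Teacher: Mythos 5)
Your overall strategy is the same as the paper's (integral identity from equality of Cauchy data plus the Faddeev solutions of Proposition \ref{first order CGO a = 0} and stationary phase at a critical point of $\psi$), but there is a genuine gap at the key error estimate. The cross terms in \eqref{integralid}, of the form $\int_{M_0}(v_2'-v_1')e^{-2i\psi/h}\cjg b,s_h^2\cjd$ and its mirror, are \emph{not} $O(h^{1+\eps})$ by Cauchy--Schwarz against the $L^2$ decay of the remainders: Cauchy--Schwarz only gives $\|v'\|_{L^\infty}\|b\|_{L^2}\|s_h^2\|_{L^2}=O(h^{\demi+\eps})$, which swamps the principal term of size $h$ produced by stationary phase. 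This is precisely the delicate point of the proof. The paper handles it by using the structural identity \eqref{shnorm}, $s_h^2=-\bar{\pl}_\psi^{*-1}(v_2 r_h^2)$, transposing the operator so that the oscillatory factor sits on $e^{2i\psi/h}v'b$, and invoking the adjoint estimate \eqref{estadj2}, which yields $O(h^{\demi+\eps})$ for that factor and hence $O(h^{1+2\eps})$ after pairing with $r^2_h$. Note that \eqref{estadj2} requires $v'|_{\pl M_0}=0$, which is supplied by the boundary determination Lemma \ref{boundarydet}; your argument never invokes boundary determination, and it is also needed in the stationary phase step: since $v'$ is only $W^{1,p}$ and $M_0$ has boundary, the expansion \eqref{statphase} is obtained by splitting off $\chi v'(z_0)$, integrating by parts in the remaining term (no boundary contribution because $v'|_{\pl M_0}=0$) and applying Riemann--Lebesgue, not by a direct application of complex stationary phase to a non-smooth amplitude on a manifold with boundary.

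A secondary inaccuracy: the cited results do not give a Morse holomorphic $\Phi$ with $z_0$ as its \emph{unique} critical point in $\bbar{M_0}$; they give, for a dense set of points $z_0$, a Morse holomorphic function with \emph{a} critical point at $z_0$. The paper compensates by choosing (via Riemann--Roch, Lemma 4.1 of \cite{GT}) the antiholomorphic form $b$ to vanish at all other critical points of $\Phi$, and then extends the conclusion from the dense set of admissible $z_0$ to all of $M_0$ by continuity of $v_1'-v_2'$ (using $W^{1,p}\subset C^0$ for $p>2$). With these two repairs — the duality argument for the cross terms (together with Lemma \ref{boundarydet}) and the correct choice of $\Phi$ and $b$ — your outline coincides with the paper's proof; using two different forms $b,\til{b}$ instead of one is harmless.
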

\begin{proof} Let $\Phi$ be a Morse holormophic function with a critical point at $z_0$. The existence of such a function for a dense set of points $z_0$ of $M_0$ is insured by Proposition 2.1 of \cite{GT}.
We start by defining the respective solutions 
\[
F^1_h:=\begin{pmatrix}{e^{\Phi/h} r^1_h}\cr{e^{\bar\Phi/h}(b + s^1_h)}\end{pmatrix},  \quad 
F^2_h:=\begin{pmatrix}{e^{-\Phi/h} r^2_h}\cr{e^{-\bar\Phi/h}(b + s^2_h)}\end{pmatrix}
\]
of $(D+V_1)F^1_h=0$  and $(D+V_2^*)F^2_h=0$ where $r^j_h,s^j_h$ are constructed 
in Proposition \ref{first order CGO a = 0}. 
Since $\mc{C}_1=\mc{C}_2$, there exists $F_h$ solution of $(D+V_2)F_h=0$ such that 
$i_{\pl M_0}^*F_h=i_{\pl M_0}^*F^1_h$. In particular, $(D+V_2)(F^1_h-F_h)=(V_2-V_1)F^1_h$ and 
$i_{\pl M_0}^*(F^1_h-F_h)=0$. Then using Green's formula and the vanishing of $F^1_h-F_h$ on the boundary
\begin{equation}\label{integid}
0=\int_{M_0}\cjg (D+V_2)(F^1_h-F_h),F_h^2\cjd =\int_{M_0}\cjg (V_2-V_1)F^1_h,F^2_h\cjd.
\end{equation}
where $\cjg\cdot,\cdot\cjd$ denotes the natural Hermitian scalar product on $\Sigma$ induced by $g$.
This gives 
\begin{equation}\label{integralid}
0=\int_{M_0}(v_2'-v_1')e^{-2i\psi/h}\Big(|b|^2+ \cjg b,s_h^2\cjd+\cjg s_h^1,b\cjd \Big)+ 
(v_2-v_1)e^{2i\psi/h}r^1_h\bbar{r^2_h}.
\end{equation}
First, notice that by Proposition \ref{first order CGO a = 0}, 
\begin{equation}\label{firstremain}
\int_{M_0}(v_2-v_1)e^{2i\psi/h}r^1_h\bbar{r^2_h}=O(h^{1+\eps})
\end{equation}
for some $\eps>0$.
Then we choose 
$b$ to be an anti-holomorphic $1$-form which vanishes 
at all critical points of $\Phi$ in $M_0$ except at the critical point $z_0\in M_0$ of $\Phi$. 
This can be done by using Riemann-Roch theorem (see Lemma 4.1 in \cite{GT}). 
We observe by using stationary phase that   
\begin{equation}\label{statphase}
\int_{M_0}(v_1'-v_2')e^{2i\psi/h}|b|^2 = C_{z_0} h e^{2i\psi(z_0)/h}(v'_1(z_0)-v'_2(z_0))|b(z_0)|^2+o(h)
\end{equation}
for some constant $C_{z_0}\not=0$. More precisely, to show this, it suffices to decompose 
$v':=v'_1-v'_2$ as $(v'-\chi v'(z_0))+\chi v'(z_0)$ where $\chi\in C_0^\infty(M_0)$
is supported near $p$, then we apply stationary phase to the term $\chi v'(z_0)$ and use 
integration by parts for the other term: if $\sum_{j}\chi_j=1$ is a partition of unity on $M_0$ associated to charts 
$U_j$ with complex coordinate $z$
\[\int_{M_0} e^{2i\psi/h}\chi_j(v'-\chi v'(z_0))|b|^2=\frac{h}{2i}\int_{U_j}
e^{2i\psi/h}\pl_z\Big(\frac{\chi_j(v'-\chi v'(z_0))|b|^2}{\pl_z \psi} \Big)\] 
since  $v'|_{\pl M_0}=0$ by the boundary identification of Lemma \ref{boundarydet}, 
and we finally conclude using Riemann-Lebesgue to deduce that the
right hand side term is $o(h)$ since $\pl_z(\chi_j(v'-\chi v'(z_0))|b|^2/\pl_z \psi)\in L^1(M_0)$ if $v'\in W^{1,p}$ for $p>2$. 
Let us now consider the term with $\cjg b,s_h^2\cjd$ in \eqref{integralid}: using \eqref{shnorm}
\[\int_{M_0}\cjg e^{2i\psi/h}  v'b,s_h^2\cjd=\int_{M_0} \cjg  \mc{E}^*(\bar{\pl}^{*-1})^*\mc{R}^*e^{2i\psi/h}v'b ,  v_2r^2_h \cjd.\]
Since $v'|_{\pl M_0}=0$, we may use \eqref{estadj2} to deduce that  
$||\mc{E}^*(\bar{\pl}^{*-1})^*\mc{R}^*e^{2i\psi/h}v'b||_{L^2}=O(h^{1/2+\eps})$ and thus combining with Proposition 
\ref{first order CGO a = 0}, we deduce that  
\[\int_{M_0}\cjg e^{2i\psi/h}  v'b,s_h^2\cjd=O(h^{1+\eps}).\]
the same argument gives that the term involving $\cjg s_h^1,b\cjd$ in \eqref{integralid} is $O(h^{1+\eps})$. 
These last two estimates combined with \eqref{statphase} and \eqref{firstremain} imply that $v'_1(z_0)=v'_2(z_0)$ by letting $h\to 0$.
The same proof using the complex geometric optics $G_h$ of Proposition \ref{first order CGO a = 0} gives 
$v_1=v_2$ if $v_j\in W^{1,p}(M_0)$ and $v_j\in L^\infty(M_0)$.
\end{proof}

\begin{remark}\label{rem1} 
As noted in section 4 of \cite{Bu}, this methods allows to get an inversion (or reconstruction) procedure to 
recover the value of a matrix potential $V$ at a given $z_0\in M_0$, provided we know a Morse holomorphic function $\Phi$
with a critical point at $z_0$ and $\Phi(z_0)=0$. 
We do not give details since it is essentially the same idea as \cite{Bu}, but essentially the method
is to compare to case $V_1=V$ to the free case $V_2=0$ 
and use  complex geometric optic (or Faddeev type) solutions $F_h^1,F_h^2$ for $h\to 0$, together with 
the  Green formula as we did above for identification: the boundary term is not zero anymore but 
is the information we measure and therefore multiplying by $h^{-1}$ and letting $h\to 0$, the boundary term
converges to $V(z_0)$ times an explicit non-zero constant .  
\end{remark}

\end{section}

\begin{section}{Boundary Determination}
For smooth $(X,q)$, it was shown by Nakamura-Sun-Uhlmann in \cite{NaSuUh} that the Cauchy data space determines the boundary values of $X_j$ up to an exact form. This is relaxed to regularity $X\in C^1,q\in L^\infty$ by Brown-Salo \cite{BrSa}. We summarize it in
\begin{proposition}
\label{boundary determination}
Let $X_1,X_2\in W^{2,p}(M_0,T^*M_0)$  and $q_1,q_2\in W^{1,p}(M_0)$ for some $p>2$, then if $\mc{C}_{L_1}=\mc{C}_{L_2}$
then 
$i_{\pl M_0}^*X_1 = i_{\pl M_0}^*X_2$  and $q_1|_{\pl M_0} = q_2|_{\pl M_0}$, where $i_{\pl M_0}$ is the inclusion map of $\pl M_0$ into $M_0$. 
\end{proposition}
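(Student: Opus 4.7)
The strategy, following Nakamura--Sun--Uhlmann and Brown--Salo, is to construct highly oscillatory approximate solutions to $L_1u=0$ and to the adjoint equation $L_2^*v=0$ that concentrate at a fixed boundary point $p\in\pl M_0$ as a small parameter $h\to 0$. Combining these with the Alessandrini-type identity arising from $\mc{C}_{L_1}=\mc{C}_{L_2}$, the leading term in $h$ will force the tangential component of $X_1-X_2$ at $p$ to vanish, and the next order will give $(q_1-q_2)(p)=0$. Since $p$ is arbitrary, the proposition follows.

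First I would introduce boundary normal coordinates $(x_1,x_2)$ near $p$, with $x_1\geq 0$ the distance to $\pl M_0$ and $p$ at the origin; in such coordinates $g=dx_1^2+\alpha(x_1,x_2)^2 dx_2^2$ with $\alpha(0,\cdot)=1$. I would then seek $u_h$ of WKB type
\[ u_h(x_1,x_2)=e^{-(x_1+ix_2)/h}\,\eta(x_2/h^{\beta})\,a(x_1,x_2)+h^{\kappa}r_h, \]
where $\eta\in C_c^\infty(\rr)$ localizes tangentially at scale $h^{\beta}$ for a suitable $\beta\in(0,\demi)$, the amplitude $a$ is a symbol solving transport equations whose leading value at $p$ is nonzero, and $r_h$ is an $H^1$-correction absorbed by elliptic/Carleman estimates. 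The factor $e^{-x_1/h}$ concentrates $u_h$ and its gradient to an $O(h)$-neighborhood of $p$. A dual construction yields $v_h$ with $L_2^*v_h=0$ of analogous form.

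The hypothesis $\mc{C}_{L_1}=\mc{C}_{L_2}$ then produces $\til u_h$ with $L_2\til u_h=0$ and identical Cauchy data, so that Green's formula gives
\[ \int_{M_0}\bigl\langle (L_1-L_2)u_h,v_h\bigr\rangle\,dv_g=0. \]
Writing $L_1-L_2=-2i\langle X_1-X_2,d\,\cdot\rangle_g+i\,\delta(X_1-X_2)+(|X_1|^2-|X_2|^2)+(q_1-q_2)$, the dominant contribution is the first-order term: since $du_h$ is dominated by the tangential component $(-i/h)\pl_{x_2}$, integration of $e^{-2x_1/h}$ in $x_1$ combined with stationary phase/Riemann--Lebesgue in $x_2$ yields, after multiplication by the correct power of $h$, a non-degenerate limit proportional to $(X_1-X_2)(\pl_{x_2})|_p\cdot|a(p)|^2$, forcing $i_{\pl M_0}^*(X_1-X_2)(p)=0$. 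With the tangential parts matched, the zeroth-order remaining terms dominate the next order and extract $(q_1-q_2)(p)$ by the same mechanism.

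The main technical obstacle is controlling the WKB construction and the remainder $r_h$ in $H^1$ with the stated low regularity of the coefficients, and verifying that the asymptotics survive division by the appropriate negative power of $h$. Since $p>2$, Sobolev embedding in dimension two gives $X_j\in C^{1,\alpha}$ and $q_j\in C^{0,\alpha}$, which is precisely the regularity framework in which Brown--Salo \cite{BrSa} proved the boundary determination. Thus the proposition reduces to a direct application of their theorem in local charts near each boundary point, combined with isothermal coordinates to absorb the conformal factor of the metric.
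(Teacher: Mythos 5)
Your proposal is correct and is essentially the paper's own treatment: the paper gives no independent argument either, but simply invokes Nakamura--Sun--Uhlmann and Brown--Salo (whose regularity hypotheses hold here since $W^{2,p}\subset C^{1,\alpha}$ and $W^{1,p}\subset C^{0,\alpha}$ for $p>2$ in dimension two) and observes that their proof is local near the boundary, hence carries over to a Riemann surface in charts. Your WKB/concentration sketch is just the standard mechanism behind those references, and the reduction in your final paragraph is exactly what the paper does.
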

This statement was only shown in \cite{NaSuUh,BrSa} for $M = \Omega\subset\R^n$ but since the proof is only localized near a neighbourhood of the boundary, it adapts naturally on a general Riemann surface and we will not provide a proof here.
Notice that by adding an exact form  $d\zeta$ to $X_1$ with $\zeta$ a function vanishing on the boundary, we do not change the Cauchy data space $\mc{C}_{L_1}$. If $x$ is a boundary defining function such that $|dx|_g=1$ at $\pl M_0$, we can set $\zeta=xf(x)$ for some $C^1$ function $f$ and we have $d\zeta|_{\pl M_0}=f|_{\pl M_0}dx$; therefore if $\nu$ is the unit interior pointing normal vector field to $\pl M_0$, we have $d\zeta(\nu)|_{\pl M_0}=f|_{\pl M_0}$ and choosing $f$ accordingly, we can choose $\zeta$ so that $X_1+d\zeta=X_2$ at $\pl M_0$. By the gauge invariance, we can at best identify $X$ up to exact forms, and therefore we may assume that $X_1=X_2$ at $\pl M_0$ as forms on $M_0$, possibly by modifying $X_1$ through an exact form $d\zeta$.\\

For our purpose we will need additional information along the boundary in order to reduce our problem to a first order system. That is, we will show that the boundary value of certain primitives of the forms $X_j$ agree with that of the boundary value of a holomorphic function. More precisely, let $X_j\in \Lambda^1(M_0)$ 
and $A_j := \pi_{0,1}X_j$ and $B_j := \pi_{1,0}X_j$ where $\pi_{0,1}:\Lambda^{1}(M_0)\to \Lambda^{0,1}(M_0)$, and 
$\pi_{1,0}:\Lambda^{1}(M_0)\to \Lambda^{1,0}(M_0)$ are the natural projections.
The main result of this section is
\begin{proposition}
\label{boundary of holomorphic}
Let $\alpha_1$ and $\alpha_2$ be smooth functions such that $\bar\partial \alpha_j = A_j$. 
Then $e^{-i(\alpha_1-\alpha_2)}|_{\partial M}$ is the boundary value of a holomorphic function.
\end{proposition}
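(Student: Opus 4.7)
The strategy is to verify the Hardy-type characterization of traces of holomorphic functions on $M_0$: a smooth function $f$ on $\partial M_0$ is the restriction of a holomorphic function on $M_0$ if and only if $\int_{\partial M_0} f\omega = 0$ for every holomorphic $1$-form $\omega$ on $M_0$ smooth up to the boundary. Applied to $f = e^{-i\beta}|_{\partial M_0}$ with $\beta := \alpha_1 - \alpha_2$, the problem reduces to establishing this orthogonality.

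By Proposition \ref{boundary determination} together with the exact-form gauge modification recalled just after its statement, I may assume $X_1 = X_2$ pointwise on $\partial M_0$, so that $A_1 = A_2$ on $\partial M_0$ and $\bar\partial\beta = A_1 - A_2$ vanishes there. Since $\omega$ is holomorphic and of type $(1,0)$, $\partial(e^{-i\beta})\wedge\omega = 0$ on the Riemann surface, hence $d(e^{-i\beta}\omega) = -ie^{-i\beta}(A_1 - A_2)\wedge \omega$, and Stokes' theorem yields
\[
\int_{\partial M_0} e^{-i\beta}\omega = -i\int_{M_0} e^{-i\beta}(A_1 - A_2)\wedge \omega.
\]
It therefore suffices to show this interior integral vanishes for every holomorphic $\omega$.

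To obtain this interior identity, I would exploit the hypothesis $\mc{C}_{L_1} = \mc{C}_{L_2}$ via Green's formula together with Faddeev-type CGO solutions adapted to the magnetic gauge. The key algebraic observation is that the complex gauge $e^{-i\alpha_j}$ kills the $(0,1)$ part of $\nabla^{X_j}$: one checks $e^{i\alpha_j}\nabla^{X_j}e^{-i\alpha_j} = d + i(B_j - \partial\alpha_j)$. Using a Morse holomorphic $\Phi$ on $M$ (Corollary 2.2 of \cite{GT}), I would construct $u_1 = e^{\Phi/h}e^{-i\alpha_1}(a + r_1^h) \in \ker L_1$ with $a$ holomorphic on $M$, and $w = e^{-\bar\Phi/h}e^{i\bar\alpha_2}(\bar b + r_2^h) \in \ker L_2^*$ with $b$ holomorphic; the remainders are built from the right inverses of Section 2 and decay at rate $h^{1/2+\epsilon}$ in $L^2$ by Lemma \ref{estimate1}. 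The Cauchy-data identification provides $\widetilde u_2 \in \ker L_2$ sharing the Cauchy data of $u_1$, and Green's identity gives
\[
\int_{M_0} \bigl((L_1-L_2)u_1\bigr)\overline{w}\, dv_g = 0.
\]
Expanding $L_1 - L_2$ in terms of $X_1 - X_2$ and $q_1 - q_2$, substituting the CGO ans\"atze, and extracting the leading asymptotics in $h$ via stationary phase applied to the oscillatory weight $e^{(\Phi-\bar\Phi)/h} = e^{2i\psi/h}$ at a critical point of $\Phi$ yields an identity of the form $\int_{M_0} e^{-i\beta}(A_1 - A_2)\wedge\omega = 0$, where $\omega$ is a holomorphic $1$-form built from $a$ and $\bar b$. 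Varying $a$ and $b$ using the Riemann--Roch freedom (as in Lemma 4.1 of \cite{GT}) exhausts a dense family of holomorphic $\omega$, establishing the required orthogonality.

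The main obstacle is controlling the numerous remainder contributions: those from the CGO construction itself, those from terms in the expansion of $L_1 - L_2$ involving derivatives of $X_j$ (which is precisely where the $W^{2,p}$ regularity hypothesis enters), and those from the off-critical parts of the stationary-phase asymptotics. The sharp $L^p$ estimates on $\bar\partial^{-1}_\psi$ and $\bar\partial^{*-1}_\psi$ from Lemma \ref{estimate1} and its counterpart are the essential tool for this control.
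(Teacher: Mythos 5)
Your skeleton — characterize holomorphic boundary values by orthogonality against $\bar\partial$-closed $(1,0)$-forms (this is the paper's Lemma \ref{orthogonality bc}), convert the boundary pairing into the interior integral $\int_{M_0}e^{-i\beta}(A_1-A_2)\wedge\omega$ by Stokes, and produce that interior identity from $\mc{C}_{L_1}=\mc{C}_{L_2}$ via CGOs and Green's formula — is indeed the paper's strategy. But your asymptotic mechanism is not coherent as written. With $u_1=e^{\Phi/h}e^{-i\alpha_1}(a+r_1)$ and $w=e^{-\bar\Phi/h}e^{i\bar\alpha_2}(\bar b+r_2)$, the product $u_1\overline{w}$ carries $e^{\Phi/h}e^{-\Phi/h}=1$: there is no oscillatory factor $e^{2i\psi/h}$ and nothing to which stationary phase applies (also $\overline{e^{i\bar\alpha_2}}=e^{-i\alpha_2}$, so your gauge factors combine to $e^{-i(\alpha_1+\alpha_2)}$ rather than $e^{-i\beta}$; the paper uses $e^{-i\bar\alpha_2}$). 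If you instead arranged the phases to produce $e^{2i\psi/h}$, the step would fail: stationary phase gives pointwise information at the critical point, not the integral identity you claim, and its contribution is $h^{-1}\cdot O(h)=O(1)$, which is swamped by the remainder terms, since the $h^{-1}$ created by $\partial$ falling on the weight multiplies remainders controlled only by $\|r_j\|_{L^2}=O(h^{1/2+\eps})$, i.e. terms of size $O(h^{-1/2+\eps})$. This is exactly why the paper reserves Bukhgeim-type stationary phase for the zeroth-order Dirac system and treats this first-order step with non-oscillatory weights: in Lemma \ref{generalized cgo} and the proof of Proposition \ref{boundary of holomorphic}, the moduli cancel, the main term is (large parameter $k$)$\times\int_{M_0}e^{-i\beta}(A_1-A_2)\wedge\partial\phi$ with errors $O(\sqrt{k})$, and one divides by $k$. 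Note also that the remainder bounds for the second-order operators come from the Carleman estimate of the appendix (Lemma \ref{CE}, Corollary \ref{laxmilgram}), not from Lemma \ref{estimate1}, whose $\bar\partial^{-1}_\psi$ estimates concern the Dirac system.

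The more fundamental gap is the density of your test family. Even after repairing the above, your scheme only yields $\int_{\partial M_0}e^{-i\beta}\,i^*_{\partial M_0}(ab\,\partial\Phi)=0$ for $\omega=ab\,\partial\Phi$ with $a,b,\Phi$ holomorphic on $M$, i.e. for holomorphic forms divisible by an exact holomorphic differential, whereas the orthogonality characterization requires all $\bar\partial$-closed $(1,0)$-forms smooth up to $\partial M_0$ (or a family dense for the boundary pairing). Lemma 4.1 of \cite{GT} only prescribes zeros of an antiholomorphic form at critical points; it does not provide this density, and obtaining it would need a separate division-plus-Runge type argument which you do not supply. This is precisely the difficulty the paper resolves by the chain Lemma \ref{no boundary crit point}, Corollary \ref{reduction}, Lemma \ref{more reduction}: harmonic weights with single-valued conjugates are too few, so the class is enlarged to harmonic Morse functions with periods in $2\pi\zz$ (dense after rescaling periods into $2\pi\qq$, using invertibility of the period matrix), and $F^k=e^{k(\phi+i\psi)}$ then serves as a single-valued CGO weight. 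Without either that reduction or an actual density proof for your family $\{ab\,\partial\Phi\}$, the final step of your argument is missing.
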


In order to prove the Proposition, we shall need a few Lemma characterizing boundary values of holomorphic functions.
Let us denote by $i_{\pl M_0}:\pl M_0\to M_0 $ the inclusion map.

\begin{lemma}
\label{orthogonality bc}
Let $f\in H^{1/2}(\partial M_0)$ be a complex valued function. Then $f$ is the restriction of a holomorphic function if and only if
\[\int_{\partial M} f i_{\partial M_0}^*\eta = 0\]
for all 1-forms $\eta\in C^{\infty}(M_0 ; T^*_{1,0}M_0)$ satisfying $\bar\partial\eta = 0$.
Similarly, $f\in H^{1/2}(\partial M_0)$ is the restriction of an anti-holomorphic function if and only if
\[\int_{\partial M_0} f i_{\partial M_0}^*\eta = 0\]
for all 1- forms $\eta\in C^{\infty}(M_0 ; T^*_{0,1}M_0)$ satisfying $\partial\eta = 0$.
\end{lemma}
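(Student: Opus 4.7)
The plan is to treat the two directions separately, with the forward direction being a direct application of Stokes's theorem and the reverse direction requiring a solvability result for the $\bar{\partial}$-equation with Dirichlet boundary conditions.

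For the forward implication, suppose $f = F|_{\partial M_0}$ with $F$ a holomorphic function on $M_0$. For any $\eta\in C^\infty(M_0;T^*_{1,0}M_0)$ with $\bar{\partial}\eta=0$, I apply Stokes's theorem to $F\eta$:
\[
\int_{\partial M_0} f\, i^*_{\partial M_0}\eta \;=\; \int_{M_0} d(F\eta)\;=\; \int_{M_0}\bigl(dF\wedge \eta+F\,d\eta\bigr).
\]
Each term on the right vanishes for purely dimensional reasons on a Riemann surface: $\bar{\partial}F=0$ gives $dF=\partial F$, so $dF\wedge\eta=\partial F\wedge\eta$ is a wedge of two $(1,0)$-forms, which must vanish since $\Lambda^{2,0}(M_0)=0$; and $d\eta=\partial\eta+\bar{\partial}\eta=0$ for the same reason together with the hypothesis $\bar{\partial}\eta=0$.

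For the reverse implication, I pick any $H^1(M_0)$ extension $F$ of $f$ and set $\omega:=\bar{\partial}F\in L^2(M_0,T^*_{0,1}M_0)$. The strategy is to find $g\in H^1_0(M_0)$ with $\bar{\partial}g=\omega$; then $F-g$ is an $H^1$ function that is weakly holomorphic (hence smooth in the interior by elliptic regularity for $\bar{\partial}$) and has boundary value $f$. By the closed-range theorem, solvability of $\bar{\partial}g=\omega$ inside $H^1_0(M_0)$ is equivalent to verifying that $\omega$ is $L^2$-orthogonal to the kernel of the adjoint of $\bar{\partial}:H^1_0(M_0)\to L^2(T^*_{0,1}M_0)$. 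An integration by parts identifies this kernel as those $\alpha\in L^2(T^*_{0,1}M_0)$ whose conjugate $\bar{\alpha}$ is closed distributionally, and then elliptic regularity forces $\bar{\alpha}$ to be a smooth holomorphic $(1,0)$-form.

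To verify the orthogonality condition I use the same Stokes computation as before: for $\alpha=\bar{\eta}$ with $\eta$ a holomorphic $(1,0)$-form,
\[
\langle\omega,\alpha\rangle_{L^2}\;=\; -i\int_{M_0}\bar{\partial}F\wedge\eta\;=\; -i\int_{M_0}d(F\eta)\;=\; -i\int_{\partial M_0} f\, i^*_{\partial M_0}\eta\;=\;0
\]
by the hypothesis. The closed range of $\bar{\partial}$ on $H^1_0(M_0)$ follows from its injectivity (any holomorphic function in $H^1_0(M_0)$ vanishes by the maximum principle) together with the Poincar\'e inequality, or equivalently from the K\"ahler identity $\bar{\partial}^*\bar{\partial}=\tfrac{1}{2}\Delta$ and standard solvability of the Dirichlet problem. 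The anti-holomorphic statement follows by complex conjugation of everything in sight. The main technical step is the closed-range and cokernel identification, but this is entirely standard elliptic theory once one observes the wedge-product vanishings that make the Stokes computations above work out identically.
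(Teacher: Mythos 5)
Your forward implication is fine (the paper in fact leaves that direction as an exercise), but the reverse implication --- the one that is actually needed later --- has a genuine gap at the cokernel identification. The kernel of the adjoint of $\bar\partial : H^1_0(M_0)\to L^2(M_0,T^*_{0,1}M_0)$ consists of all $\alpha\in L^2(M_0,T^*_{0,1}M_0)$ with $\bar\partial^*\alpha=0$ in the distributional sense in the interior; since no boundary condition is imposed on $\alpha$, elliptic regularity only gives that $\bar\alpha$ is a holomorphic $(1,0)$-form \emph{in the interior}, not smooth up to $\partial M_0$ as you assert. This kernel is infinite dimensional (already on the disk it contains $\overline{h}\,d\bar z$ for every Bergman-class holomorphic $h$, including functions blowing up at the boundary and having no trace), and for such $\alpha$ neither the hypothesis of the lemma (which only concerns $\eta\in C^\infty(M_0;T^*_{1,0}M_0)$, i.e.\ forms smooth up to the boundary) nor your Stokes computation (which requires a boundary trace of $\eta$) applies. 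So your verification shows $\omega=\bar\partial F$ is orthogonal only to a subspace of the obstruction space, and the closed-range argument does not deliver $g\in H^1_0$ unless you add a further approximation theorem: holomorphic $(1,0)$-forms smooth up to the boundary are dense in the space of $L^2$ holomorphic $(1,0)$-forms on $M_0$. That density statement is plausible (Runge/Behnke--Stein type), but it is a non-trivial extra ingredient which you neither state nor prove; as written, the step ``elliptic regularity forces $\bar\alpha$ to be a smooth holomorphic $(1,0)$-form'' is incorrect.

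The paper's argument is arranged precisely so that this issue never arises: it takes the harmonic extension $u\in H^1(M_0)$ of $f$ and shows $\langle\bar\partial u,\omega\rangle=0$ for every test form $\omega\in C^\infty(M_0,T^*_{0,1}M_0)$, by decomposing $\omega=\bar\partial\gamma+\eta$ via the Hodge--Morrey decomposition, with $\gamma$ satisfying a Dirichlet condition and $\eta=\pi_{0,1}\omega_0$ the $(0,1)$-part of the harmonic field of a smooth form, hence smooth up to the boundary. Then $\langle\bar\partial u,\bar\partial\gamma\rangle=\langle\bar\partial^*\bar\partial u,\gamma\rangle=0$ because $u$ is harmonic, and the $\eta$-term is exactly a boundary pairing of $f$ against a form covered by the hypothesis; density of smooth forms in $L^2$ then gives $\bar\partial u=0$. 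Either switch to that route, or supply the density of boundary-smooth holomorphic $1$-forms in the $L^2$ kernel before invoking the closed range theorem; your Poincar\'e/coercivity argument for closed range itself is correct, and your treatment of the anti-holomorphic case by conjugation is fine once the main direction is repaired.
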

\begin{proof}
We will only prove the holomorphic statement, the anti-holomorphic statement follows similarly. 
Suppose $f\in H^{1/2}(\partial M_0)$ is such that
\[\int_{\partial M} f i_{\partial M_0}^*\eta = 0.\]
and denote by $u\in H^1(M_0)$ its harmonic extension to $M_0$. 
We would like to show that $u$ is actually holomorphic. We will do this by showing that 
\[\langle\bar\partial u,\omega\rangle = 0,\,\, \forall \omega \in C^{\infty}(M_0, T^*_{0,1}M_0).\]
By the Hodge-Morrey decomposition given in \cite[Th 2.4.2]{Sch}, a 1-form $\omega\in L^2$ can be decomposed as 
\[\omega=d\alpha+* d\beta+\omega_0\]
where $\alpha,\beta \in H^1(M_0)$ satisfy Dirichlet condition $\alpha|_{\pl M_0}=\beta|_{\pl M_0}=0$ and 
$\omega_0$ is closed and co-closed $d\omega_0=0, d*\omega_0=0$.
If $\omega \in C^{\infty}(M_0,T^*_{0,1}M_0)$, then 
\[\omega = \pi_{0,1}\omega = \pi_{0,1} d\alpha + \pi_{0,1}* d\beta + \pi_{0,1}\omega_0\]
Since $\pi_{0,1} d = \bar\partial$ and $\pi_{0,1}* d = -i\bar\partial$ on functions, 
and $\eta:=\pi_{0,1}\omega_0 \in C^{\infty}(M_0,T^*_{0,1}M_0)$ 
satisfies $\partial \eta= 0$, we can write $\omega = \bar\partial \gamma +\eta$
where $\gamma$ has Dirichlet boundary condition and $\partial\eta= 0$. 
We now compute by Stoke's Theorem
\[\langle \bar\partial u,\omega\rangle = \cjg\bar{\pl}u,\bar\pl \gamma\cjd + \cjg  \bar\partial u,\eta\cjd = \langle  
\bar{\pl}^*\bar\partial u,\gamma \rangle -i \int_{\partial M} f i_{\partial M_0}^* \bar\eta .\]
The first term on the right side vanishes since $u$ is harmonic,  the last term vanishes 
since $\bar\eta \in C^{\infty}(M_0, T^*_{1,0}M_0)$ satisfies $\bar\pl \bar\eta = 0$ 
and we have assumed that $f$ is orthogonal to all such boundary values of sections of $T^*_{1,0}M_0$. 
Therefore we conclude that $\bar\partial u= 0$. 
Since we will not actually use the converse statement in this paper, it is left as an exercise.
\end{proof}
\begin{lemma}
\label{no boundary crit point}
Let $f\in H^{1/2}(\partial M_0)$ be a complex valued function. Then $f$ is the restriction of a holomorphic function if and only if
\[\int_{\partial M_0} f i_{\partial M_0}^*\partial \phi = 0\]
for all smooth real valued harmonic functions $\phi$ which have no critical point on the boundary.
Similary, $f\in H^{1/2}(\partial M_0)$ is the restriction of an anti-holomorphic function if and only if
\[\int_{\partial M_0} f i_{\partial M_0}^*\bar{\pl}\phi = 0\]
for all smooth real valued harmonic functions $\phi$ which have no critical point on the boundary.
\end{lemma}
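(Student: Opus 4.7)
The plan is to reduce the statement to Lemma \ref{orthogonality bc} by exploiting the fact that $\pl\phi$ is itself a holomorphic $(1,0)$-form whenever $\phi$ is real harmonic: in any local complex coordinate $\bar{\pl}\pl\phi = \frac{1}{4}\Delta_g\phi\,dz\wedge d\bar{z} = 0$, so $\pl\phi \in C^\infty(M_0,T^*_{1,0}M_0)$ with $\bar{\pl}(\pl\phi) = 0$. This immediately gives the ``only if'' direction by applying Lemma \ref{orthogonality bc} with $\eta = \pl\phi$, and the anti-holomorphic version is identical up to complex conjugation. The heart of the proof is the converse.

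For the ``if'' direction I would first remove the no-boundary-critical-point hypothesis by approximation. The critical set on $\pl M_0$ of a nonconstant real harmonic $\phi$ is discrete, as it is the zero set of a nontrivial real-analytic section of $T^*M_0$ along $\pl M_0$. One can then choose a real harmonic $\psi$ on $M_0$ (for instance the Poisson extension of a suitable smooth boundary datum) whose differential is nonzero at each of these finitely many points; for $\eps>0$ small enough $\phi_\eps := \phi + \eps\psi$ is real harmonic without boundary critical points, and continuity in $\eps$ of $\int_{\pl M_0} f\, i^*_{\pl M_0}\pl\phi_\eps$ lets us pass to the limit $\eps \to 0$ to get the same identity for $\phi$. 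By $\cc$-linearity in the real harmonic function we obtain
\[\int_{\pl M_0} f\, i^*_{\pl M_0}\pl h = 0 \qquad \text{for every complex-valued harmonic } h \text{ on } M_0.\]

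It then remains to show that every $\eta \in C^\infty(M_0,T^*_{1,0}M_0)$ with $\bar{\pl}\eta = 0$ is of the form $\pl h$ for some complex harmonic $h$, so that Lemma \ref{orthogonality bc} concludes. For real harmonic $\phi$ one has $\pl\phi = \demi(d\phi + i*d\phi)$, hence for any closed loop $\gamma$,
\[\int_\gamma \pl\phi = \frac{i}{2}\int_\gamma *d\phi.\]
By Hodge theory on the Riemann surface with boundary $M_0$, the map sending a real harmonic function $\phi$ to its periods $\big(\int_{\gamma_j} *d\phi\big)_j$ on a basis $\gamma_1,\ldots,\gamma_k$ of $H_1(M_0,\zz)$ is surjective onto $\rr^k$. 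Thus one can build $h_0 = \phi_1 + i\phi_2$ complex harmonic so that $\pl h_0$ has the same (complex) periods as $\eta$; then $\eta - \pl h_0$ is a holomorphic $(1,0)$-form with vanishing periods, hence exact, $\eta - \pl h_0 = dF = \pl F$ with $\bar{\pl}F = 0$ forced by the $(1,0)$-type, and finally $\eta = \pl(h_0 + F)$ with $h_0 + F$ complex harmonic. The anti-holomorphic analogue follows by conjugation.

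The main obstacle is this last step, where the topology of $M_0$ enters through $H_1(M_0,\zz)$: one must check that periods of $\pl$ of real harmonic functions fill out $i\rr^k$, which is the Hodge-theoretic realizability of arbitrary real periods by conjugate differentials on a surface with boundary. The density/transversality step removing boundary critical points is, by contrast, a routine genericity argument.
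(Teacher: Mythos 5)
Your reduction to Lemma \ref{orthogonality bc} is a legitimate alternative route for the main step: the paper instead invokes the surjectivity of $\pl:H^4(M_0)\to H^{3}(M_0;T^*_{1,0}M_0)$ (from McDuff--Salamon, with a totally real boundary condition of large Maslov index) to write every $\bar{\pl}$-closed $(1,0)$-form as $\pl$ of a (complex) harmonic function, whereas you realize the complex periods by conjugate differentials of real harmonic functions -- the surjectivity/invertibility of the period matrix is exactly the Cerne--Flores fact the paper itself uses in Lemma \ref{more reduction} -- and then conclude by exactness of a closed $(1,0)$-form with vanishing periods, so that $\eta=\pl(h_0+F)$ with $h_0+F$ harmonic. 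That part is correct and arguably more classical than the paper's argument.

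The genuine gap is in the step you dismiss as routine, namely removing the no-boundary-critical-point restriction. First, the finiteness of the boundary critical set is unjustified: $\phi$ is only smooth up to $\pl M_0$ (harmonic functions are real-analytic in the interior, not up to a merely smooth boundary with smooth data), so $d\phi|_{\pl M_0}$ need not be real-analytic and its zero set can be infinite. Second, and more seriously, even with finitely many boundary critical points $x_1,\dots,x_N$, asking only that $d\psi(x_j)\neq 0$ does not make $\phi+\eps\psi$ critical-point-free on the boundary for small $\eps$: new boundary critical points can appear arbitrarily close to $x_j$. Concretely, in a half-plane chart with boundary $\{y=0\}$ take $\phi=\demi(x^2-y^2)+xy$ and $\psi=x+y$; then $d\phi(0)=0$, $d\psi(0)\neq 0$, but $\phi+\eps\psi$ has a boundary critical point at $x=-\eps$ for every $\eps>0$. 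This is precisely why the paper's proof proceeds in two stages: it first replaces the boundary trace by a nearby Morse function on the circle (so the boundary critical set of the harmonic extension is contained in the finite, nondegenerate critical set of the trace), and then perturbs by a harmonic $\phi_1$ whose differential at those points is purely normal, $d\phi_1(x_j)=d\nu$, constructed via Lemma 2.6 of \cite{GT2} on prescribing Taylor expansions of holomorphic functions; this keeps the tangential critical set fixed while making the normal derivative nonzero there. Without this (or an equivalent transversality argument), your approximation step, and hence the ``if'' direction, is incomplete.
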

\begin{proof}
By applying \cite[Th. C1.10]{McDSa} like in \cite[Cor. 2.3]{GT2} with a totally real 
subbundle boundary condition having high boundary Maslov index, one obtains that the 
operator $\partial : H^4(M_0)\to H^{3}(M_0; T^*_{1,0}M_0)$ is surjective. Using this we see that Lemma \ref{orthogonality bc} implies that $f$ is the restriction of a holomorphic function if and only if
\[\int_{\partial M_0} f i_{\partial M_0}^*\partial \phi = 0\]
for all harmonic functions $\phi$. Now it remains to show that this statement is equivalent to the case where we consider only smooth harmonic functions with no critical points on the boundary. That is, we want to show that smooth harmonic functions with no critical points on the boundary form a dense subset of the harmonic functions in $C^k(M_0)$. Indeed, let $\phi$ be a harmonic function with smooth boundary value $g\in C^\infty(\partial M_0)$. Since Morse functions are generic on the circle, it suffices to consider the case where $g$ is a Morse function with isolated critical points $\{x_1,..,x_N\}$. Clearly, the critical points of $\phi$ forms a subset of the set $\{x_1,..,x_N\}$. We will make a small perturbation to $\phi$ so that $x_1$ is guaranteed to not be a critical point.
Let $\phi_1$ be a smooth harmonic function with boundary value $g_1 \in C^\infty(\partial M_0)$ and such that 
and $d\phi_1(x_j)=d\nu$ for all $j=1,\dots,N$ if $d\nu$ is the unit conormal form to $\pl M_0$; 
the existence of such a function $\phi_1$ is insured by applying Lemma 2.6 of \cite{GT2} on the manifold $M$ containing strictly $M_0$:  indeed this Lemma says that there exist holomorphic functions with prescribed Taylor expansion to order $2$ at $x_1,\dots,x_N\in {\rm int}(M)$  and therefore taking its real part one obtains the desired harmonic function. 
For all $\eps >0$ small, the points $x_1,\dots,x_N$ are not critical point of the function $\phi' := \phi + \eps \phi_1$.  
Furthermore, for all $\eps>0$ small enough, $g' := \phi'|_{\pl M_0}$ is again a Morse function on $\pl M_0$ with critical points 
$\{x_1,..,x_N\}$, therefore the critical points of $\phi'$ on $\pl M_0$ are contained in $\{x_1,..,x_N\}$, which implies 
that $\phi'$ has no critical points on $\pl M_0$. Thus smooth harmonic functions with no critical points on $\pl M_0$ 
form a dense subset of $C^k$ harmonic functions and we are done.
\end{proof}
In view of Lemma \ref{no boundary crit point} and the fact that Morse harmonic functions form a dense subset of harmonic functions (see \cite[Lemma 2.2]{GT}) in $C^k(M_0)$, we have the following corollary:
\begin{coro}
\label{reduction}
Let $f\in H^{1/2}(\partial M_0)$ be a complex valued function. Then $f$ is the restriction of a holomorphic function if and only if
\[\int_{\partial M_0} f i_{\partial M_0}^*\partial \phi = 0\]
for all smooth real valued harmonic functions $\phi$ which is Morse up to the boundary and has no critical point on the boundary.
Similary, $f\in H^{1/2}(\partial M_0)$ is the restriction of an anti-holomorphic function if and only if
\[\int_{\partial M_0} f i_{\partial M_0}^*\bar{\pl}\phi = 0\]
for all smooth real valued harmonic functions $\phi$ which is Morse up to the boundary and has no critical point on the boundary.
\end{coro}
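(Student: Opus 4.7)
The plan is to reduce the statement directly to Lemma \ref{no boundary crit point} by a density argument, exploiting the fact that ``no critical point on $\pl M_0$'' is an open condition in $C^1$ topology and that the boundary pairing $\phi\mapsto \int_{\pl M_0}fi_{\pl M_0}^*\pl\phi$ is continuous in a sufficiently weak norm on $\phi$.

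The ``only if'' direction is immediate: if $f=u|_{\pl M_0}$ with $u$ holomorphic on $M_0$, then $\pl(u\phi)=u\pl\phi$ (since $\bar{\pl}u=0$ and $\phi$ is a function so $\pl\pl\phi=0$ on a Riemann surface as a two-form) and Stokes' formula gives $\int_{\pl M_0}fi_{\pl M_0}^*\pl\phi=\int_{M_0}d(u\pl\phi)=\int_{M_0}\bar\pl u\wedge\pl\phi=0$, which applies in particular to the restricted class of harmonic $\phi$ described in the statement.

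For the ``if'' direction, the strategy is to upgrade vanishing on the Morse--without--boundary--critical--point class to vanishing on the full class of smooth harmonic functions with no boundary critical points, after which Lemma \ref{no boundary crit point} finishes the job. So, given any smooth real harmonic $\phi$ on $M_0$ with no critical point on $\pl M_0$, I would invoke the density result for Morse harmonic functions established in \cite[Lemma 2.2]{GT} on the ambient surface $M\supset M_0$ to produce a sequence $\phi_n$ of smooth harmonic functions on $M$ (hence on $M_0$) that are Morse up to $\pl M_0$ and satisfy $\phi_n\to\phi$ in $C^k(M_0)$ for any fixed $k$. Since $\pl M_0$ is compact and $|d\phi|>0$ on it, for $n$ large enough we have $|d\phi_n|>0$ on $\pl M_0$ as well, so $\phi_n$ lies in the class considered in the statement and thus $\int_{\pl M_0}fi_{\pl M_0}^*\pl\phi_n=0$.

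To conclude, I need continuity of the pairing $\phi\mapsto \int_{\pl M_0}fi_{\pl M_0}^*\pl\phi$ as $\phi_n\to\phi$. Because $\pl\phi_n\to\pl\phi$ in $C^{k-1}(M_0,T^*_{1,0}M_0)$, the trace $i_{\pl M_0}^*\pl\phi_n$ converges in $H^{-1/2}(\pl M_0)$ (indeed in a much stronger norm), and the $H^{1/2}$--$H^{-1/2}$ duality pairing with $f\in H^{1/2}(\pl M_0)$ passes to the limit. Hence $\int_{\pl M_0}fi_{\pl M_0}^*\pl\phi=\lim_n\int_{\pl M_0}fi_{\pl M_0}^*\pl\phi_n=0$, and Lemma \ref{no boundary crit point} gives the desired conclusion. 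The anti-holomorphic case is identical, replacing $\pl$ by $\bar\pl$ throughout. The only delicate point of the argument is verifying that the density result of \cite{GT} yields harmonic approximants whose Morse and boundary--regular properties are preserved; this is what the openness of ``$|d\phi|>0$ on $\pl M_0$'' in $C^1$ gives us, so no genuine new work is required.
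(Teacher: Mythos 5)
Your proposal is correct and follows essentially the same route as the paper: Corollary \ref{reduction} is obtained there precisely by combining Lemma \ref{no boundary crit point} with the $C^k$-density of Morse harmonic functions from \cite[Lemma 2.2]{GT}, using the openness of the condition that $d\phi$ is nonvanishing on the compact boundary and the continuity of the boundary pairing, exactly as you argue. Your explicit Stokes computation for the ``only if'' direction is a harmless addition, since the paper leaves that direction implicit in the preceding lemmas.
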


Let $\phi$ be a smooth real valued harmonic function defined on $M$. It is easy to see  that $\phi$ has a harmonic conjugate if and only if
\[M_j(\phi): = \int_{\gamma_j}*d\phi = 0 \ \ \ \ j = 1,.., n.\]
where $\{\gamma_1,..,\gamma_{n}\}$ is a family of generators of the fundamental group $\pi(M_0,m_0)$  
(with $m_0\in M_0$ fixed).  
However,  if $M(\phi) = (M_1(\phi),..,M_{n}(\phi))\in (2\pi\zz)^{n}$ then one can construct a multivalued function $\psi$ such that $F := e^{\phi+i\psi}$ is a (single-valued) holomorphic function with $|F| = e^\phi$ and $\partial F = (1-i)F\partial \phi$.
Indeed, it simply suffices to set $\psi(m)=-\int_{\gamma(m)} * d\phi$ where $\gamma(m)$ is a smooth path
joining $m_0$ to $m$.
\begin{lemma}
\label{more reduction}
Let $f\in H^{1/2}(\partial M_0)$ be a complex valued function. Then $f$ is the restriction of a holomorphic function if and only if
\begin{equation}\label{intbord}
\int_{\partial M_0} f i_{\partial M_0}^*\partial \phi = 0
\end{equation}
for all smooth real valued harmonic functions $\phi$ which is Morse up to the boundary and $M(\phi) \in (2\pi\zz)^{n}$. Similary, $f\in H^{1/2}(\partial M_0)$ is the restriction of an anti-holomorphic function if and only if
\[\int_{\partial M_0} f i_{\partial M_0}^*\bar{\pl}\phi = 0\]
for all smooth real valued harmonic functions $\phi$ which is Morse up to the boundary and $M(\phi) \in( 2\pi\zz)^{n}$.
\end{lemma}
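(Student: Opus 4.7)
The plan is to deduce Lemma \ref{more reduction} from Corollary \ref{reduction} by a scaling-plus-lattice-correction argument. Set $I(\phi) := \int_{\partial M_0} f\, i_{\partial M_0}^* \partial\phi$. By Corollary \ref{reduction}, it is enough to show that the hypothesis --- vanishing of $I(\phi)$ for Morse harmonic $\phi$ with $M(\phi) \in (2\pi\mathbb{Z})^n$ --- forces $I(\phi) = 0$ for every smooth real harmonic $\phi$ that is Morse with no boundary critical points and arbitrary periods $c \in \mathbb{R}^n$.

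First, I would fix smooth harmonic functions $\eta_1,\ldots,\eta_n$ on $M_0$ with $M_i(\eta_j) = 2\pi\delta_{ij}$. These exist because the period map from harmonic functions to $H^1(M_0) \simeq \mathbb{R}^n$ is surjective: any de Rham class admits a harmonic representative, which is $d\eta$ for some harmonic $\eta$. Given $\phi$ as above with periods $c$, for each large $N \in \mathbb{N}$ pick $m_N \in \mathbb{Z}^n$ minimizing $|Nc/(2\pi) - m_N|$, so $|m_{N,j} - Nc_j/(2\pi)| \leq 1/2$, and set
\begin{equation*}
\phi_N := N\phi + p_N, \qquad p_N := \sum_{j=1}^n \Big(m_{N,j} - \frac{Nc_j}{2\pi}\Big)\eta_j.
\end{equation*}
By construction $M(\phi_N) = 2\pi m_N \in (2\pi\mathbb{Z})^n$, while $\|p_N\|_{C^k(M_0)} \leq \tfrac{1}{2}\sum_j \|\eta_j\|_{C^k(M_0)}$ is bounded uniformly in $N$.

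Next I would check that $\phi_N$ is Morse with no boundary critical points for all $N$ large enough. Since $\phi$ has no critical points on $\partial M_0$, compactness gives $|d\phi| \geq \delta > 0$ there, hence $|d\phi_N| \geq N\delta - \|dp_N\|_{C^0} > 0$ on $\partial M_0$ once $N$ is large. Interior critical points of $\phi_N$ satisfy $d\phi = -dp_N/N$, which tends uniformly to $0$, so they accumulate only at the finitely many isolated Morse critical points of $\phi$; at a perturbed critical point, $\mathrm{Hess}(\phi_N) = N\,\mathrm{Hess}(\phi) + \mathrm{Hess}(p_N)$ is non-degenerate for $N$ large since $N\,\mathrm{Hess}(\phi)$ is non-degenerate and dominates the bounded perturbation. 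Thus $\phi_N$ lies in the test class of the hypothesis, so $I(\phi_N) = 0$.

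Finally, by linearity $I(\phi_N) = NI(\phi) + I(p_N)$, so $I(\phi) = -I(p_N)/N$. The continuity estimate
\begin{equation*}
|I(p)| \leq \|f\|_{L^2(\partial M_0)} \|i_{\partial M_0}^*\partial p\|_{L^2(\partial M_0)} \leq C\|f\|_{H^{1/2}}\|p\|_{C^1(M_0)},
\end{equation*}
combined with the uniform bound on $\|p_N\|_{C^1}$, yields $|I(\phi)| \leq C'/N \to 0$ and hence $I(\phi) = 0$. The anti-holomorphic statement follows by complex conjugation. The main delicacy is the persistence of the Morse and no-boundary-critical-point conditions for $\phi_N$; this is standard once one exploits the isolation and non-degeneracy of the critical points of $\phi$ and the uniform $C^2$ bound on the lattice-correction $p_N$, but it is the step that really requires the factor $N$ to dominate the bounded perturbation.
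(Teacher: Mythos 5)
Your argument is correct and reaches the conclusion by a genuinely different route than the paper. The paper first observes that the hypothesis class with periods in $(2\pi\zz)^n$ controls, by integer scaling $\phi\mapsto k\phi$, the class with periods in $(2\pi\qq)^n$, and then runs a density argument: any Morse harmonic $\phi$ with no boundary critical points is approximated in $C^3$ by $\phi'=\phi+\sum_j\epsilon_j\phi_j$ with rational periods (using harmonic functions $\phi_j$ with $M_j(\phi_k)=\delta_{jk}$, whose existence is taken from the invertibility of the period matrix in Cerne--Flores), the Morse and no-boundary-critical-point conditions being open; one then passes to the limit $\epsilon_j\to 0$ in the boundary pairing. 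You instead scale the target function by a large integer $N$ and add a \emph{uniformly bounded} harmonic correction $p_N$ to land exactly on the lattice $(2\pi\zz)^n$, so that linearity gives $I(\phi)=-I(p_N)/N=O(1/N)$; the density of $\qq$ and the limiting argument are replaced by a single quantitative estimate, and the stability of the Morse condition is needed in the slightly different form ``$N\,\phi$ dominates a bounded perturbation'' rather than ``small perturbation of $\phi$'', which you verify correctly (localization of critical points of $\phi_N$ near those of $\phi$ plus non-degeneracy of $N\,{\rm Hess}(\phi)+{\rm Hess}(p_N)$). Both proofs rest on the same two ingredients: surjectivity of the period map $\eta\mapsto(M_1(\eta),\dots,M_n(\eta))$ on harmonic functions, and openness/stability of the Morse and boundary conditions; your version buys an explicit rate and avoids the approximation-and-limit step, at the cost of the slightly more delicate large-$N$ Morse verification.

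One point to repair: your justification for the existence of the $\eta_j$ with $M_i(\eta_j)=2\pi\delta_{ij}$ is not right as stated. A harmonic representative of a nontrivial de Rham class cannot be of the form $d\eta$ (an exact form represents the trivial class), and what is actually needed is that the map sending a harmonic function $\eta$ to the periods of its conjugate differential $*d\eta$ over $\gamma_1,\dots,\gamma_n$ is onto $\rr^n$. This is exactly the invertibility of the period matrix invoked by the paper via equation (3.1) of Cerne--Flores (equivalently, a classical fact from the theory of harmonic differentials on bordered surfaces), and you should cite or prove that statement rather than the Hodge-theoretic one you wrote. With that reference substituted, the proof is complete; like the paper, you only treat the nontrivial implication, the converse being the elementary Stokes argument already implicit in Lemma \ref{orthogonality bc}.
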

\begin{proof}
By Corollary \ref{reduction}, we need to check that (\ref{intbord}) is equivalent to the condition stated in Corollary \ref{reduction}. Observe that if $M(\phi)\in(2\pi\qq)^n$, then $M(k\phi)\in(2\pi\zz)^n$ for some integer $k$, thus the condition
\eqref{intbord} is satisfied for all smooth harmonic Morse $\phi$ with $M(\phi)=(2\pi\qq)^n$ and no boundary critical point if and only if 
it is satisfied for all smooth harmonic Morse $\phi$ with $M(\phi)\in(2\pi\zz)^n$ and no boundary critical points. Therefore, 
by taking limits, it suffices to show that any Morse harmonic function $\phi$ with no critical point on the boundary can be approximated by a sequence of Morse harmonic functions with periods in $2\pi \qq$ in the $C^3(M_0)$ topology. 
Indeed, let $\{\phi_1,..,\phi_{n}\}$ be a set of real valued $C^3$ harmonic functions in $M_0$ such that $M_j(\phi_k) = \delta_{jk}$. Such harmonic functions exist due to equation (3.1) of \cite{CeFl} which states that the period matrix is invertible. Then given a real valued harmonic function $\phi$, we consider small perturbations $\phi' := \phi + \sum_{j = 1}^{n} \epsilon_j \phi_j$ of $\phi$. 
Since $\qq$ is dense in $\rr$, $\epsilon_j$ can be chosen arbitrarily small 
such that $M(\phi')\in (2\pi\qq)^{n}$. If $\phi$ is Morse with no critical points on the boundary and being Morse is an open condition, $\phi'$ is also Morse with no critical points on the boundary if $\epsilon_j$ are taken small enough.
\end{proof}

\begin{lemma}
\label{generalized cgo}
Let $\phi$ be a harmonic Morse function such that $M(\phi) \in (2\pi\zz)^{n}$ and $\alpha_1,\alpha_2$ are functions 
such that $\bar\partial\alpha_1 = A_1$ and $\bar{\pl}\alpha_2 = A_2$. 
Then for all $k\in \zz$ large enough, there exists a solution $u_1\in H^1(M_0)$ to
\[L_1 u_1 = 0\]
such that 
\[u_1 = F^k (e^{-i\alpha_1} + r_1) \, \textrm{ with }\,  \sqrt{k}\|r_1\|_{L^2} +  \|r_1\|_{H^1}\leq C\] 
where $F$ is holomorphic,  
$\partial F = (1-i)F\pl\phi$ and  $|F| = e^{\phi}$. Similarly, there exists a solution $u_2\in H^1(M_0)$ to 
\[L_2^* u_2 = 0\]
such that
\[ u_2 = {\bar F}^{-k} (e^{-i\bar{\alpha}_2} + r_2)  \, \textrm{ with }\, \sqrt{k}\|r_2\|_{L^2} +  \|r_2\|_{H^1}\leq C. \]
\end{lemma}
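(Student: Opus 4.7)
The plan is to use the ansatz $u_1 = F^k e^{-i\alpha_1}(1+\rho_1)$, which expands to the required form $u_1 = F^k(e^{-i\alpha_1}+r_1)$ with $r_1 := e^{-i\alpha_1}\rho_1$. The algebraic identity driving the construction is that $F^k e^{-i\alpha_1}$ lies in the kernel of the $(0,1)$-part $\bar{\pl}+iA_1$ of $\nabla^{X_1}$: indeed $\bar{\pl}F^k=0$ since $F$ is holomorphic, and $\bar{\pl}e^{-i\alpha_1}=-iA_1 e^{-i\alpha_1}$ by $\bar{\pl}\alpha_1=A_1$, so $(\bar{\pl}+iA_1)(F^k e^{-i\alpha_1})=0$. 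The hypothesis $M(\phi)\in(2\pi\zz)^n$ is precisely what ensures $F^k$ is single-valued on $M_0$ for every integer $k$.

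To exploit this identity I would first reduce $L_1 u_1 = 0$ to a first-order system compatible with Section \ref{firstorder}. On a Riemann surface the Weitzenbock--Kodaira identity gives $L_1 = 2\bar{\pl}^{X_1*}\bar{\pl}^{X_1} + Q_1$ where $Q_1$ is a zeroth-order term built from $q_1$ and the curvature $dX_1$. Setting $\omega:=2\bar{\pl}^{X_1}u_1$, the equation $L_1 u_1=0$ becomes the coupled first-order system $\bar{\pl}^{X_1}u_1=\tfrac12\omega$ and $\bar{\pl}^{X_1*}\omega + Q_1 u_1=0$. The gauge substitution $\tilde u_1 := e^{i\alpha_1}u_1$, $\tilde\omega := e^{i\alpha_1}\omega$ converts the connection Cauchy--Riemann operators into the bare ones via $\bar{\pl}^{X_1}=e^{-i\alpha_1}\bar{\pl}e^{i\alpha_1}$, and the system takes the form $(D+\tilde V_1)(\tilde u_1, \tilde\omega)=0$, where $D$ is the operator of Section \ref{firstorder} and $\tilde V_1$ is a diagonal $W^{1,p}$-endomorphism of $\Sigma$ whose entries are built from $q_1$, $dX_1$, $\alpha_1$.

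With $h:=1/k$, the amplitude $F^k=e^{k(\phi+i\psi)}$ coincides with the phase $e^{\Phi/h}$ of Section \ref{firstorder}. Substituting $\tilde u_1=e^{\Phi/h}(1+\tilde\rho_1)$ and $\tilde\omega=e^{\Phi/h}\tilde\sigma_1$ reduces the problem for $(\tilde\rho_1,\tilde\sigma_1)$ to a Faddeev-type system of the kind treated by Proposition \ref{first order CGO a = 0} with leading term $a=1$, $b=0$. The Neumann-series construction together with Lemma \ref{s-r decay} then yields $\|\tilde\rho_1\|_{L^2}=O(h^{1/2+\eps})$ along with a uniform $W^{1,p}$-bound; translating back by $r_1=e^{-i\alpha_1}\tilde\rho_1$ and using the embedding $W^{1,p}\hookrightarrow H^1$ (which holds for $p>2$) gives $\sqrt{k}\|r_1\|_{L^2}+\|r_1\|_{H^1}\leq C$.

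The construction of $u_2$ for $L_2^* u_2 = 0$ is parallel with holomorphic and antiholomorphic roles interchanged: since $X_2$ is real, $L_2^* = {\nabla^{X_2}}^*\nabla^{X_2}+\bar q_2$, and one checks that $\bar F^{-k}e^{-i\bar\alpha_2}$ lies in the kernel of the $(1,0)$-part $\pl+iB_2$ of $\nabla^{X_2}$ using $\pl\bar F^{-k}=0$ (as $\bar F$ is antiholomorphic) and $\pl\bar\alpha_2=\overline{\bar{\pl}\alpha_2}=\bar A_2=B_2$. The analogous first-order reduction based on $L_2^* = 2\pl^{X_2*}\pl^{X_2}+Q_2^*$ and the same Faddeev scheme produce $r_2$ with identical estimates. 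The main technical obstacle I anticipate is the bookkeeping of this reduction so that the gauged potential $\tilde V_j$ genuinely lies in $W^{1,p}$; this reduces to having $\alpha_j\in W^{3,p}$, which follows from $X_j\in W^{2,p}$ and elliptic regularity for the $\bar{\pl}$-equation $\bar{\pl}\alpha_j = A_j$ on $M_0$.
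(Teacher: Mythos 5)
Your key algebraic observation is the same as the paper's: since $\bar\pl F^k=0$ and $\bar\pl e^{-i\alpha_1}=-iA_1e^{-i\alpha_1}$, the amplitude $F^ke^{-i\alpha_1}$ is annihilated by the $(0,1)$-part of $\nabla^{X_1}$, and $M(\phi)\in(2\pi\zz)^n$ is exactly what makes $F^k$ single-valued. But from there the paper does not pass through the first-order system at all: it keeps the second-order operator, writes $L_1=-2i*e^{-i\bar\alpha_1}\pl e^{i\bar\alpha_1}e^{-i\alpha_1}\bar\pl e^{i\alpha_1}+Q_1$, so that $L_1(F^ke^{-i\alpha_1})=Q_1F^ke^{-i\alpha_1}$, and then removes this error by solving $e^{-k\phi}L_1e^{k\phi}\tilde r_1=\tilde Q_1$ via the Carleman-based solvability of Corollary \ref{laxmilgram}, which delivers simultaneously $\sqrt{k}\|\tilde r_1\|_{L^2}+\|\tilde r_1\|_{H^1}\leq C$; the case of $L_2^*$ is handled by the opposite factorization plus the fact that $L_2^*-L_2$ is zeroth order. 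Your route --- gauge-transforming to the Dirac system of Section \ref{firstorder} and reusing the $\bar\pl_\psi^{-1}$/Neumann-series construction of Proposition \ref{first order CGO a = 0} with $a=1$, $b=0$, $h=1/k$ --- can be made to work, and it is interesting in that it recycles the CGO machinery instead of invoking a separate Carleman estimate; the price is extra bookkeeping that your sketch currently glosses over.

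Concretely, three points need repair or justification. First, the gauge transform of the form component is wrong as written: to obtain a system of the form $D+\tilde V$ with $\tilde V$ diagonal you must set $\tilde\omega=e^{i\bar\alpha_1}\omega$ (the analogue of $F_{\bar A_1}\omega$ in \eqref{first order sys}), not $e^{i\alpha_1}\omega$; with your choice a nonconstant factor remains inside $\bar\pl^*$. Second, Proposition \ref{first order CGO a = 0} and Lemma \ref{s-r decay} only assert $L^2$ bounds on the remainders, while the Lemma requires a uniform $H^1$ bound; this does follow from the Neumann series because $\bar\pl_\psi^{-1},\bar\pl_\psi^{*-1}$ are bounded $L^2\to H^1$ uniformly in $h$, but it is an additional argument, not a quotation. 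Moreover the rate $O(h^{1/2+\eps})$ rests on the improved estimate \eqref{q=2 2}, which requires the source to vanish at $\pl M_0$; your source $\tfrac12 Q_1|F_{A_1}|^{-2}$ does not, so you should instead invoke \eqref{q>=2 2} with $q=2$, which gives $O(h^{1/2})=O(k^{-1/2})$ --- exactly what the statement needs. Third, the phase here is not that of Section \ref{firstorder}: $\psi$ is only a multivalued harmonic conjugate, so $e^{\pm 2i\psi/h}$ is a genuine function only because $h=1/k$ with $k\in\zz$ (again the period condition); the local stationary/nonstationary phase arguments of Lemma \ref{estimate1} still apply, and for $u_2$ one must work with the conjugate Dirac operator on $\Lambda^0\oplus\Lambda^{1,0}$ (or replace $\Phi$ by $-\Phi$ and conjugate), but these adaptations should be stated. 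With these fixes your proof goes through; the paper's Carleman route is simply shorter and yields the $H^1$ control for free.
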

\begin{proof}
Since $M(\phi)\in (2\pi\zz)^{n}$ we can construct a multivalued complex conjugate $\psi$ such that $F := e^{\phi + i\psi}$ is a well defined holomorphic function satisfying $\partial F = (1-i)F\partial\phi$ and $|F| = e^{\phi}$.
Since for any $\alpha_1$ satisfying $\partial \bar{\alpha}_1 = \bar{A}_1$, $L_1$ can be written as
\[L_1 = -2i*(\pl +i\bar{A}_1\wedge)(\bar\pl +iA_1) + Q_1 =
-2i* e^{-i\bar{\alpha}_1}\partial e^{i\bar{\alpha}_1}e^{-i\alpha_1}\bar\partial e^{i\alpha_1} + Q_1\]
for some $Q_1\in L^\infty(M_0)$, 
we have that $L_1 F^k e^{i\alpha_1} = F^ke^{i\alpha_1}Q_1$. 
By Corollary \ref{laxmilgram}, for all $|k|$ large enough there exists a $\tilde r_1$ solving
\[e^{-k\phi }L_1 e^{k\phi} \tilde r_1 = |F^{-k}| L_1 |F^k| \tilde r_1 = \tilde Q_1\]
such that
\[\sqrt{k}\|\tilde r_1\|_{L^2} + \|\tilde r_1\|_{H^1} \leq C\|\tilde Q_1\|_{L^2}.\]  
Setting $r_1 = \frac{F^k}{|F^k|} \tilde r_1$ we have that
\[L_1 F^k(e^{\alpha_1} + r_1) = 0\]
and that $r_1$ satisfies the desired estimates. 
The construction for $u_2$ follows similarly after factorizing  
\[L_2=-2i* e^{-i\alpha_1}\bar\partial e^{i\alpha_1}e^{-i\bar{\alpha}_2}\partial e^{i\bar{\alpha}_2} + \til{Q}_2\] 
for some $\til{Q}_2\in L^\infty(M_0)$ and using that $L_2^*-L_2$ is a zeroth order differential operator. 
\end{proof}

\noindent{\bf Proof of Proposition \ref{boundary of holomorphic}}: 
We need to show that if $\alpha_1$ and $\alpha_2$ are functions such that $\bar\partial\alpha_j = A_j$, 
then $e^{-i(\alpha_1 - \alpha_2)}\mid_{\pl M_0}$ is the boundary value of a holomorphic function. 
By Lemma \ref{more reduction} this is equivalent to showing that 
\[\int_{\pl M_0} f i_{\pl M_0}^*\partial \phi = 0\]
for all smooth real valued harmonic functions $\phi$ which is Morse up to the boundary and $M(\phi) \in (2\pi\zz)^{n}$.\\
Let $\phi$ be such a harmonic function and let $u_1 = F^k (e^{-i\alpha_1} + r_1)$ and $u_2 = {\bar F}^{-k} (e^{-i\bar{\alpha}_2} + r_2)$ be the solutions constructed in Lemma \ref{generalized cgo}. Plugging these solutions into the boundary integral identity 
\[\int_{M_0} \bar{u_2}(2(A_1 - A_2) \wedge \partial u_1 + 2(\bar{A}_1 - \bar{A}_2)\wedge \bar\partial u_1 + (Q_1 - Q_2)u_1) = 0\]
we have that 
\[2k(1-i) \int_{M_0} e^{-i(\alpha_1 -\alpha_2)}(A_1 - A_2)\wedge \partial \phi + O(\sqrt{k}) = 0.\]
Using the fact that $-ie^{-i(\alpha_1 -\alpha_2)}(A_1 - A_2) = \bar\partial e^{-i(\alpha_1 -\alpha_2)}$ we can integrate by parts and take $k\to\infty$ to get that
\[\int_{\pl M_0} e^{-i(\alpha_1 -\alpha_2)}i_{\partial M_0}^*\partial\phi = 0.\]
Since this is true for any real valued harmonic Morse function $\phi$ with $M(\phi) \in (2\pi\zz)^{n}$, $e^{-i(\alpha_1 -\alpha_2)}
|_{\pl M_0}$ is the boundary value of a holomorphic function by Lemma \ref{more reduction}.
 \qed\\

In view of this proposition we will denote by $F_{-i(\alpha_1-\alpha_2)}$ to be the unique holomorphic function with boundary 
value $e^{-i(\alpha_1-\alpha_2)}\mid_{\partial M_0}$. 
Observe that one can reverse the indices $1$ and $2$ in the proof above, and this shows that 
$e^{i(\alpha_1-\alpha_2)}|_{\pl M_0}$ is also the boundary value of a holomorphic function $F_{i(\alpha_1-\alpha_2)}$.
Therefore, writing $\alpha=\alpha_1-\alpha_2$, it is clear after remarking that the product
$F_{i\alpha}F_{-i\alpha}$  has boundary value $1$, that 
\begin{equation}\label{relationF}
F_{i\alpha}F_{-i\alpha}=1. 
\end{equation}

\end{section}

\begin{section}{Reduction to First Order Systems}
In this section we use the boundary identity result we obtained in Proposition \ref{boundary of holomorphic} 
to reduce the inverse problem for the second order equation to an inverse problem for the first order system of Dirac type on sections of 
the bundle $\Sigma=\Lambda^0\oplus \Lambda^{0,1}$ introduced in Section \ref{firstorder}. 
We will do this by factoring the operators $L_1$ and $L_2$ the appropriate way.\\

Let $\alpha_j$ satisfy $\bar\pl \alpha_j = A_j$ (this exists by Proposition \ref{cauchy riemann}), 
then we also have $\pl\bar{\alpha}_j=\bar{A}_j$. 
We set $\alpha:=\alpha_1-\alpha_2$, then by Proposition \ref{boundary of holomorphic} and \eqref{relationF}, 
we see that there exists a nonvanishing holomorphic function $F_{-i \alpha}$ such that 
$F_{-i\alpha}|_{\pl M_0} = e^{-i\alpha}|_{\pl M_0}$. In particular, if  
\begin{equation}\label{defFAj}
F_{A_2} := e^{i\alpha_2}\, \textrm{ and }\, F_{A_1} = F_{-i\alpha}e^{i\alpha_1}
\end{equation} 
then 
\begin{equation}
\label{boundary agree}
\bar\pl F_{A_j} = iA_j F_{A_j}\,\, \textrm{ and }\,\, F_{A_1}|_{\pl M_0} = F_{A_2}|_{\pl M_0}.
\end{equation}
Similarly, there is a unique nonvanishing anti-holomorphic function $\bar{F}_{i\alpha}$ 
such that $F_{\bar{A}_1} := \bar{F}_{i\alpha}e^{i\bar{\alpha}_1}$ and 
$F_{\bar{A}_2} := e^{i\bar{\alpha}_2}$ satisfy 
\begin{equation}
\label{boundary agree2}
\pl F_{\bar{A}_j} = iF_{\bar{A}_j}\bar{A}_j \,\, \textrm{ and }\,\, F_{\bar{A}_1}|_{\pl M_0} = F_{\bar{A}_2}|_{\pl M_0}.
\end{equation}
We can then write
\[ L_j = -2i* (\pl+i\bar{A}_j\wedge)(\bar\pl + iA_j) + Q_j = 
2 F_{\bar{A}_j}^{-1}\bar{\pl}^* F_{\bar{A}_j} F_{A_j}^{-1}\bar\pl F_{A_j} +  Q_j.\]
where $Q_j = * dX_j + q_j$. Let $u_j\in H^2(M_0)$ and set $\omega_j :=  F_{A_j}^{-1}\bar\pl F_{A_j} u_j\in H^{1}(M_0, T_{0,1}^*M_0)$, 
\[L_ju_j=0 \iff \begin{pmatrix}
0 & {\bar\pl}^*\cr
\bar{\pl} & 0
\end{pmatrix} 
\begin{pmatrix}
{F_{A_j}}&{0}\cr
{0}&{F_{\bar{A}_j}}
\end{pmatrix}
\begin{pmatrix}
{u_j}\cr
{\omega_j}
\end{pmatrix} + \begin{pmatrix}
F_{\bar{A}_j}Q_j/2 & 0\cr
0 & -F_{A_j}
\end{pmatrix} 
\begin{pmatrix}
{u_j}\cr
{\omega_j}
\end{pmatrix}=0.\]
Observe that  \eqref{relationF}  implies 
$\bar{F}_{\bar{A}_j} = F_{A_j}^{-1}$, therefore, 
if we set $(\tilde u_j, \tilde\omega_j) := (F_{A_j} u_j, F_{\bar{A}_j} \omega_j)=(F_{A_j}u_j,\bbar{F}_{A_j}^{-1}\omega_j)$, then $(u_j,\omega_j)$ 
solves the above system of equations if and only if $(\tilde u_j, \tilde\omega_j)$ solves
\begin{equation}\label{first order sys}
\begin{pmatrix}
0 & {\bar\pl}^*\cr
\bar{\pl} & 0
\end{pmatrix} 
\begin{pmatrix}
{\tilde u_j}\cr
{\tilde \omega_j}
\end{pmatrix} + \begin{pmatrix}
\demi Q_j|F_{A_j}|^{-2}& 0\cr
0 & -|F_{A_j}|^2
\end{pmatrix} 
\begin{pmatrix}
{\tilde u_j}\cr
{\tilde \omega_j}
\end{pmatrix}=0\end{equation}
Denoting the Cauchy data space for $L_j$ to be 
\[\mc{C}_{L_j}\:= \{(u, \nabla_\nu^{X_j} u)|_{\pl M_0}\in H^{\demi}(\pl M_0)\x H^{-\demi}(\pl M_0); u\in H^1(M_0), L_ju=0 \}\] 
where $\nabla^{X_j}_{\nu}u:=du(\nu)+iX_j(\nu)u$ and $\nu$ the unit normal interior vector field to $\pl M_0$,
we deduce from this discussion the 
\begin{proposition}
\label{cauchy data}
Assume that $X_1,X_2\in W^{2,p}(M_0,T^*M_0)$ are real valued and $q_1,q_2\in W^{1,p}(M_0)$ complex valued 
for some $p>2$. 
If $L_1$ and $L_2$ have the same Cauchy data space $\mc{C}_{L_1}=\mc{C}_{L_2}$, 
then the first order system \eqref{first order sys} with diagonal endomorphism $(\demi Q_1|F_{A_1}|^{-2}, |F_{A_1}|^2)$ 
of $\Sigma$ has the same Cauchy data space as the one with endomorphism $(\demi Q_2|F_{A_2}|^{-2}, |F_{A_2}|^2)$ 
for $Q_j=-dX_j+q_j$ and $F_{A_j}$ defined as above.
\end{proposition}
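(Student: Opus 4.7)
The plan is to verify that the transformation $u_j \mapsto (\tilde u_j,\tilde\omega_j) := (F_{A_j} u_j,\, F_{\bar A_j}\omega_j)$, with $\omega_j := iA_j u_j + \bar\partial u_j = \pi_{0,1}(\nabla^{X_j} u_j)$, sends solutions of $L_j u_j=0$ bijectively onto solutions of \eqref{first order sys} and, moreover, intertwines the two Cauchy data spaces in a way that matches boundary values for $j=1,2$. The factorization computed immediately before the statement already shows the bijection on solutions; what we need is that equal Cauchy data for $L_1,L_2$ gives equal boundary data for the first-order systems. By symmetry of the construction under the index swap, it suffices to prove one inclusion.

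Given $(\tilde u_1,\tilde\omega_1)$ solving \eqref{first order sys} with $j=1$, set $u_1 := F_{A_1}^{-1}\tilde u_1$, which then solves $L_1 u_1 = 0$. The hypothesis $\mc{C}_{L_1}=\mc{C}_{L_2}$ produces $u_2\in H^1(M_0)$ with $L_2 u_2=0$ and $(u_2,\nabla^{X_2}_\nu u_2)|_{\pl M_0} = (u_1,\nabla^{X_1}_\nu u_1)|_{\pl M_0}$. I set $\tilde u_2 := F_{A_2} u_2$ and $\tilde\omega_2 := F_{\bar A_2}\,\pi_{0,1}(\nabla^{X_2} u_2)$; the same factorization then shows $(\tilde u_2,\tilde\omega_2)$ solves \eqref{first order sys} with $j=2$, and interior elliptic regularity for $L_j$ (since $X_j\in W^{2,p}$, $q_j\in W^{1,p}$) gives enough smoothness for the boundary traces to make sense.

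The first-component match is immediate from $F_{A_1}|_{\pl M_0}=F_{A_2}|_{\pl M_0}$ in \eqref{boundary agree}, giving $\tilde u_2|_{\pl M_0} = F_{A_2} u_2|_{\pl M_0} = F_{A_1} u_1|_{\pl M_0} = \tilde u_1|_{\pl M_0}$. For the second component, the key observation is that at any point of $\pl M_0$ the 1-form $\nabla^{X_j} u_j$ is determined by its normal part $\nabla^{X_j}_\nu u_j$ and its tangential part $\pl_t u_j + i(i_{\pl M_0}^*X_j)(\pl_t)\, u_j$. By Proposition \ref{boundary determination}, together with the gauge normalization made in the paragraph following it (where an exact form is absorbed into $X_1$ to arrange $X_1=X_2$ on $\pl M_0$ as forms on $M_0$), both ingredients coincide for $j=1,2$. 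Hence $\omega_1|_{\pl M_0}=\omega_2|_{\pl M_0}$, and combined with $F_{\bar A_1}|_{\pl M_0}=F_{\bar A_2}|_{\pl M_0}$ from \eqref{boundary agree2} we get $\tilde\omega_1|_{\pl M_0}=\tilde\omega_2|_{\pl M_0}$.

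The main (and essentially only) nontrivial point is the second-component match: one must recognize that $\omega_j|_{\pl M_0}$ is precisely the boundary value of the $(0,1)$ part of $\nabla^{X_j} u_j$, and that this boundary value is controlled by the Cauchy data only once both the boundary identification of $X_j$ (Proposition \ref{boundary determination}) and the gauge fixing of $X_1-X_2$ on $\pl M_0$ are in place. Everything else is bookkeeping: the algebraic factorization is already in the excerpt, $F_{A_j}$ and $F_{\bar A_j}$ were constructed precisely so that their boundary values match across $j$, and the regularity requirements follow from standard elliptic theory for $L_j$.
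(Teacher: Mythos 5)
Your proposal is correct and follows essentially the same route as the paper: gauge-normalize so that $X_1=X_2$ on $\pl M_0$ via Proposition \ref{boundary determination}, observe that equal Cauchy data for $L_1,L_2$ forces the full differential (hence the $(0,1)$ part of $\nabla^{X_j}u_j$) to agree at the boundary, and then use the matching boundary values of $F_{A_j}$ and $F_{\bar A_j}$ from \eqref{boundary agree}--\eqref{boundary agree2}. The only cosmetic difference is that you match the full restriction of the $1$-form component while the paper only records the pullback $i_{\pl M_0}^*(\til\omega_1-\til\omega_2)$, which is all the first-order Cauchy data space requires.
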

\begin{proof}
By the boundary determination and the remark following Proposition \ref{boundary determination}, we can suppose
that $X_1=X_2$ on $\pl M_0$ as forms, and therefore $A_1=A_2$ as well on $\pl M_0$.
Combining the discussion above with equalities \eqref{boundary agree}, \eqref{boundary agree2}, 
we have that $2$ solutions $u_j$ of $L_ju_j=0$ satisfying $(u_1-u_2)|_{\pl M_0}=0$
 are such that $ (\til{u}_1-\til{u}_2)|_{\pl M_0}=0$ and 
\[ i_{\pl M_0}^*(\til{\omega}_1-\til{\omega}_2)= 
i^*_{\pl M_0}\Big(\bar{F}_{A_1}^{-1}[(\bar{\pl}+iA_1)u_1-(\bar\pl+iA_2)u_2]\Big)= i_{\pl M_0}^*
(e^{i\bar{\alpha}_2} \bar{\pl}(u_1-u_2))\]
where $(\til{u}_j,\til{\omega}_j):=(F_{A_j}u_j,|F_{A_j}|^{-2}\bar\pl F_{A_j} u_j)$. Now,  
\[ \nabla^{X_j}_\nu u_j|_{\pl M_0}=(du_j(\nu)+iX_j(\nu)u_j)|_{\pl M_0}\]
and since $X_1=X_2$ on $\pl M_0$,  $\mc{C}_{L_1}=\mc{C}_{L_2}$ implies that
$d(u_1-u_2)(\nu)|_{\pl M_0}=0$, which together with $(u_1-u_2)|_{\pl M_0}=0$ gives $\til{\omega}_1=\til{\omega}_2$ on $\pl M_0$.
This achieves the proof.
\end{proof}

We deduce from Proposition \ref{cauchy data} and Theorem \ref{inverseresult} the following 
\begin{coro}\label{dX=0}
Let $X_j\in W^{2,p}(M_0,T^*M_0)$ be real valued and $q_j\in W^{1,p}(M_0)$ complex valued, and $p>2$. If 
the Cauchy data spaces for $L_1,L_2$ satisfy $\mc{C}_{L_1}=\mc{C}_{L_2}$,
 then $d(X_1-X_2)=0$ and $q_1=q_2$. 
\end{coro}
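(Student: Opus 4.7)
The plan is to combine the reduction of Proposition \ref{cauchy data} with the first-order identification result Theorem \ref{inverseresult}, and then to interpret the two diagonal entries of the recovered endomorphism in terms of $dX_j$ and $q_j$.

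First, I would invoke Proposition \ref{cauchy data} to pass from the hypothesis $\mc{C}_{L_1}=\mc{C}_{L_2}$ to the equality of Cauchy data spaces of the first order systems of the form \eqref{first order sys} with diagonal endomorphisms
\[ V_j := \operatorname{diag}\bigl(\tfrac12 Q_j |F_{A_j}|^{-2},\ -|F_{A_j}|^2\bigr) \in \operatorname{End}(\Sigma). \]
To apply Theorem \ref{inverseresult} I need $V_j \in W^{1,p}(M_0,\operatorname{End}(\Sigma))$. Given $X_j\in W^{2,p}$, Proposition \ref{cauchy riemann} gives $\alpha_j\in W^{3,p}$ (so that $\bar\pl\alpha_j=A_j$), hence by \eqref{defFAj} the nowhere-vanishing function $F_{A_j}$ also lies in $W^{3,p}$. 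Since $|F_{A_j}|^2$ is bounded below, $|F_{A_j}|^{\pm 2}\in W^{2,p}\subset L^\infty$, and since $Q_j\in W^{1,p}$ the regularity of $V_j$ is as required. Theorem \ref{inverseresult} therefore yields $V_1=V_2$, i.e.
\begin{equation}\label{Vequal-plan}
 |F_{A_1}|^2 = |F_{A_2}|^2 \qquad \textrm{and} \qquad Q_1 |F_{A_1}|^{-2} = Q_2 |F_{A_2}|^{-2}.
\end{equation}

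The key step—and the one I expect to be the main obstacle—is to extract $d(X_1-X_2)=0$ from the equality of moduli in \eqref{Vequal-plan}. The idea is that $|F_{A_j}|^2$ encodes the curvature of the connection via a $\pl\bar\pl\log$ identity. Since $F_{A_j}$ is nowhere vanishing, $\log|F_{A_j}|^2$ is a globally defined real function, and $\bar\pl\log F_{A_j}$, $\pl\log\bar F_{A_j}$ are globally defined single-valued $1$-forms even on a non-simply-connected $M_0$. From $\bar\pl F_{A_j}=iA_j F_{A_j}$ and, using that $X_j$ is real so $\overline{A_j}=\bar A_j$, its conjugate $\pl\bar F_{A_j}=-i\bar A_j\bar F_{A_j}$, one obtains
\[ \bar\pl\log F_{A_j} = iA_j, \qquad \pl \log \bar F_{A_j} = -i\bar A_j. \]
Combined with $\pl\bar\pl=-\bar\pl\pl$ on functions and the decomposition $\log|F_{A_j}|^2=\log F_{A_j}+\log\bar F_{A_j}$, this yields
\[ \pl\bar\pl\log |F_{A_j}|^2 = i\pl A_j + i\bar\pl \bar A_j = i\,dX_j, \]
the last equality using $X_j=A_j+\bar A_j$ together with $\bar\pl A_j=0$ and $\pl\bar A_j=0$ in two complex dimensions. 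The first identity in \eqref{Vequal-plan} therefore forces $dX_1=dX_2$, i.e.\ $d(X_1-X_2)=0$.

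Finally, the second identity in \eqref{Vequal-plan} together with $|F_{A_1}|^2=|F_{A_2}|^2$ gives $Q_1=Q_2$. Since $Q_j$ differs from $q_j$ by a universal multiple of $*dX_j$ (read off from the expansion of $-2i*(\pl+i\bar A_j\wedge)(\bar\pl+iA_j)$ against the definition of $L_j$), and since we have just shown $dX_1=dX_2$, it follows that $q_1=q_2$.
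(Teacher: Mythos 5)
Your proof is correct and takes essentially the same route as the paper: reduce to the first order system via Proposition \ref{cauchy data}, identify the diagonal entries by Theorem \ref{inverseresult}, and then deduce $d(X_1-X_2)=0$ and $q_1=q_2$ from the equalities $|F_{A_1}|^2=|F_{A_2}|^2$ and $Q_1|F_{A_1}|^{-2}=Q_2|F_{A_2}|^{-2}$. Your curvature identity $\partial\bar\partial\log|F_{A_j}|^2=i\,dX_j$ is just a repackaging of the paper's computation of $\Delta\,{\rm Im}(\alpha_1-\alpha_2)$ via the harmonicity of $\log|F_{-i\alpha}|$ (and your check that the endomorphisms lie in $W^{1,p}$ is a detail the paper leaves implicit), so the two arguments coincide in substance.
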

\begin{proof} Acoording to Proposition \ref{cauchy data} and Theorem \ref{inverseresult}, we have that
\begin{equation}\label{firstident}
-dX_1+q_1=-dX_2+q_2 \textrm{ and } |F_{A_2}|^2=|F_{A_1}|^2 
\end{equation}
where $X_j=A_j+\bar{A}_j$ and $A_j\in \Lambda^{0,1}, \bar{A}_j\in \Lambda^{1,0}$. The functions 
$F_{A_j}$ are defined  in \eqref{defFAj} and $\alpha_j$ satisfies $\bar{\pl}\alpha_j=A_j$. 
Since $|F_{A_2}|=e^{-{\rm Im}(\alpha_2)}=|F_{A_1}|=e^{-{\rm Im}(\alpha_1)}|F_{-i\alpha}|$
for $\alpha=\alpha_1-\alpha_2$ and $F_{i\alpha}$ is a holomorphic function which does not vanish 
in $M_0$ in view of \eqref{relationF} (the log of its modulus is then harmonic), then setting $A=\bar{\pl}\alpha=A_1-A_2$ we deduce  
that 
\[0=2i\Delta {\rm Im}(\alpha)=\Delta(\alpha-\bar{\alpha})=-2i* \pl \bar{\pl}\alpha-2i*\bar{\pl}\pl\bar{\alpha}=
-2i*(\pl A+\bar{\pl}\bar{A})=-2i* d(X_1-X_2)\]
and therefore $q_1=q_2$ as well by \eqref{firstident}.
\end{proof}
\end{section}

\begin{section}{Cauchy data determine the holonomy}\label{holonomy}

For each $m\in M_0$ and each closed loop $\gamma$ based at $m_0$,
 the parallel transport for the connection $\nabla^X$ on the bundle $M_0\x \cc$ defines
an isomorphism $P^X_\gamma: \cc\to \cc$ of the fiber $\cc$ at $m$, thus $P_\gamma$ can be viewed as a 
non-zero complex number $P^X_\gamma\in \cc\setminus \{0\}$. The holonomy group of $\nabla^X$ at $m$
is given by 
\[H_{m}(\nabla^X):=\{P^X_\gamma\in \cc\setminus\{0\}; \gamma \textrm{ is a closed loop based at }m \}.\]
For the connection $\nabla^X=d+iX$, an easy computation shows that 
\[P^X_\gamma = e^{-i\int_{\gamma}X}\]
where $\gamma$ is an oriented closed curve. 
Notice that if $X$ is real valued, the holonomy group $H_m(\nabla^X)$ is a subgroup of $S^1$.
If $X$ is a flat connections $1$-forms, ie. with cuvature $dX=0$, 
then the map $\gamma\to P^X_\gamma$ induces a natural group morphism $\rho_m^X:\pi_1(M_0,m)\to H_m(\nabla^X)$
where $\pi_1(M_0,m)$ is the fundamental group based at $m$, ie. the set of closed 
loop up to homotopy equivalence. The morphism $\rho^X$ is called the holonomy representation into $GL(\cc)$ 
and it is trivial if and only if 
\[ e^{-i\int_{\gamma}X}=1 \textrm{ for all closed loop }\gamma \textrm{ based at }m,\]
this condition is also independent of $m$. If $X_1,X_2$ are two connection $1$-forms with same curvature $dX_1=dX_2$, and if 
the holonomy representation of $X:=X_2-X_1$ is trivial, then there exist a unitary 
bundle isomorphism $F:E\to E$ (recall that $E=M_0\x \cc$), 
or equivalently a function $F:M_0\to \cc$ of modulus $|F|=1$, defined by 
\[F(m')=e^{i\int_{\gamma(m,m')} X}\]
where $\gamma(m,m')$ is any $C^1$ path joining $m$ and $m'$, this is well defined independently of the path
since $dX=0$ and thanks to the triviality of the holonomy representation. The connections $X_1$ and $X_2$ are related by
$F^*(d+iX_1)F=d+iX_2$, and if moreover $i^*_{\pl M_0}X_1=i_{\pl M_0}^*X_2$, then the isomorphism $F$ is the identity
when restricted to $\pl M_0$. 

In view of this discussion, to prove Theorem \ref{main}, we need to prove
\begin{theorem} \label{holo}
Let $X_1,X_2\in W^{2,p}(M_0)$ and $q_1,q_2\in W^{1,p}(M_0)$ for some $p>2$. Then the 
Cauchy data spaces $\mc{C}_{L_1}$ and $\mc{C}_{L_2}$ coincide if and only if 
$q_1=q_2$, $\nabla^{X_1}$ and $\nabla^{X_2}$ have same curvature $dX_1=dX_2$, and  
the holonomy representation $\rho_m^X$ is trivial for each $m\in M_0$, where we have set $X:=X_1-X_2$.
\end{theorem}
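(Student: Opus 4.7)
The reverse implication is routine: after adjusting $X_1$ by an exact form to achieve $i^*_{\pl M_0}(X_1-X_2)=0$, the integer flux condition makes the function $F(m):=\exp(i\int_{m_0}^m(X_1-X_2))$ single valued and unit modulus with $F|_{\pl M_0}=1$, and then $L_1=F^{-1}L_2 F$ combined with $q_1=q_2$ forces $\mc{C}_{L_1}=\mc{C}_{L_2}$. The content is the forward implication. Corollary \ref{dX=0} already supplies $q_1=q_2$ and $d(X_1-X_2)=0$, and Proposition \ref{boundary determination} combined with the exact-form gauge freedom lets us assume $X_1=X_2$ pointwise at every point of $\pl M_0$. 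Setting $X:=X_1-X_2$, what remains is to show $\int_\gamma X\in 2\pi\zz$ for every closed loop $\gamma$ in $M_0$.

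The plan is to pass to the universal cover $\pi:\til M_0\to M_0$ and produce a global gauge. Since $\pi^*X$ is closed on the simply connected $\til M_0$, we write $\pi^*X=d\eta$ for some real valued $\eta\in W^{3,p}_{\rm loc}(\til M_0)$, normalized so that $\eta\equiv 0$ on one chosen lifted connected component $\til C_0$ of $\pi^{-1}(\pl M_0)$ (which is possible since $i^*_{\pl M_0}X=0$). Setting $\til F:=e^{i\eta}$, the identity $(d+iX_2)\til F=\til F(d+iX_1)$ together with $q_1=q_2$ gives the operator conjugation $\til L_2\til F=\til F\til L_1$, where $\til L_j:=\pi^*L_j$. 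Pick any nontrivial $u\in W^{1,p}(M_0)$ with $L_1u=0$, and use $\mc{C}_{L_1}=\mc{C}_{L_2}$ to produce $v\in W^{1,p}(M_0)$ with $L_2 v=0$ having the same Cauchy data. The function $w:=\til F\,\pi^*u-\pi^*v$ then satisfies $\til L_2 w=0$ on $\til M_0$, and vanishes together with its normal derivative $dw(\nu)$ on $\til C_0$ in the trace sense: indeed, on $\til C_0$ we have $\til F=1$ and $d\til F(\nu)=iX(\nu)\til F=0$, while the matching Cauchy data for $u,v$ and the identity $X_1=X_2$ on $\pl M_0$ give $u=v$ and $du(\nu)=dv(\nu)$ there.

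Since $X_2\in W^{2,p}\subset C^{1,1-2/p}$ and $q_2\in W^{1,p}\subset C^{1-2/p}$ for $p>2$, the coefficients of $\til L_2$ are H\"older continuous, so standard boundary unique continuation (extend $w$ by zero across $\til C_0$ and apply an Aronszajn type theorem) yields $w\equiv 0$ in a collar of $\til C_0$, and interior unique continuation then propagates this to $w\equiv 0$ on the connected $\til M_0$. Hence $\til F\,\pi^*u\equiv\pi^*v$ on $\til M_0$. For any $\gamma\in\pi_1(M_0,m_0)$ with associated deck transformation $\tau_\gamma:\til M_0\to\til M_0$, the pullbacks $\pi^*u$ and $\pi^*v$ are $\tau_\gamma$-invariant while, by the very construction of $\eta$, one has the monodromy relation $\til F\circ\tau_\gamma=e^{i\int_\gamma X}\til F$. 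Applying $\tau_\gamma$ to $\til F\,\pi^*u=\pi^*v$ at any point where $\pi^*u\ne 0$ (which exists as $u\not\equiv 0$) gives $e^{i\int_\gamma X}=1$, that is $\int_\gamma X\in 2\pi\zz$, which is the triviality of the holonomy representation $\rho^X_{m_0}$. The main obstacle is the boundary unique continuation step: one must carefully justify the trace vanishing of both $w$ and $dw(\nu)$ on $\til C_0$, and invoke unique continuation for a second order elliptic operator with only H\"older continuous coefficients, which is precisely where the regularity hypotheses $X_j\in W^{2,p}$ and $q_j\in W^{1,p}$ with $p>2$ are used.
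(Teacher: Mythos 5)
Your forward implication is correct, and it takes a genuinely different route from the paper. The paper gives two proofs, both staying on $M_0$: the first chooses, for each primitive class of $\pi_1(M_0,p_1)$, a simple representative made of an interior arc and a boundary arc, works in a thin simply connected tubular neighbourhood where a primitive $\alpha$ of $X$ exists, compares $e^{i\alpha}u_1$ with $u_2$ for solutions with specially chosen boundary data $f(p_1)=f(p_2)=1$, and uses unique continuation from a small boundary piece; general classes are then handled as products of primitive ones. The second proof avoids unique continuation altogether by using the circle-valued function $\Theta=F_{A_1}/F_{A_2}$ already produced by the reduction to the first order system. Your argument instead lifts to the universal cover, builds one global gauge $\til F=e^{i\eta}$ normalized on a lifted boundary component, applies boundary unique continuation once to a single arbitrary nontrivial solution pair $u,v$ with matching Cauchy data, and reads off all periods simultaneously from the deck-transformation monodromy $\til F\circ\tau_\gamma=e^{i\int_\gamma X}\til F$. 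This buys uniformity (no reduction to simple or primitive loops, no special boundary data, all of $\pi_1$ treated at once), at the cost of the covering-space set-up; the ingredients it rests on (Corollary \ref{dX=0}, Proposition \ref{boundary determination} plus the exact-form adjustment making $X_1=X_2$ on $\pl M_0$, and unique continuation from vanishing Cauchy data on a boundary piece for an operator with H\"older coefficients) are exactly those the paper also uses, so the regularity bookkeeping goes through.

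One sentence of your converse is not right as written: you cannot in general ``adjust $X_1$ by an exact form to achieve $i^*_{\pl M_0}(X_1-X_2)=0$,'' because the only exact adjustments $d\zeta$ that preserve $\mc{C}_{L_1}$ have $\zeta|_{\pl M_0}=0$ and hence $i^*_{\pl M_0}d\zeta=0$; they can fix the normal component at the boundary but never a nonzero tangential discrepancy. Indeed, without assuming $i^*_{\pl M_0}(X_1-X_2)=0$ the ``if'' direction fails (take $M_0$ the disk, $X_1=0$, $X_2=d\zeta$ with $\zeta$ real and non-constant on $\pl M_0$: curvatures agree, the holonomy is trivial, yet the Cauchy data spaces differ by boundary determination). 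This is really a looseness in the statement of the theorem itself—the paper's two proofs address only the forward implication, and in Theorem \ref{main} the tangential boundary equality is built into the definition of gauge isomorphism—so your converse should simply add the hypothesis $i^*_{\pl M_0}(X_1-X_2)=0$ (or quote the gauge-isomorphism discussion of the introduction), after which your construction of $F$ and the conjugation argument are fine. The substantive part of your proof, the forward direction, stands.
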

\begin{proof} We shall give two different proofs, the first one using directly the Cauchy data space, the other one using Proposition 
\ref{cauchy data}.\\
 
\textbf{First Proof}. We have already shown that $d(X_1 - X_2) = 0$ and $q_1 = q_2 $. 
Furthermore, by boundary determination (Proposition \ref{boundary determination} and the remark that follows),
 we can conclude that the tangential components 
of $X_1$ and $X_2$ agree along the boundary, ie. $i_{\pl M_0}^*(X_1-X_2)=0$, and 
that there exist a function $\zeta$ vanishing on the boundary such that $X_1+d\zeta=X_2$ on $\pl M_0$. 
Since the addition of an exact form as above does not change the Cauchy data space, we may assume
without loss of generality that $X_1=X_2$ at $\pl M_0$.
Let $[\gamma]\in \pi_1(M_0,p_1)$ be an equivalence class of loop and $\gamma$ be a representative. 
Since $X:=X_1 - X_2$ is closed, $\int_{\gamma}X$ is independent of the chosen representative. 
We choose a simple (non self-intersecting) representative $\gamma$ based at $p_1$, made of $2$ oriented pieces $[p_1, p_2]$ 
and $[p_2,p_1]$ such that $[p_2,p_1]\subset \pl M_0$.
This is possible for each primitive class (or generator) of  $[\gamma]\in \pi_1(M_0,p_1)$, ie. every class which can not be expressed 
as a power of another class. All the other classes in $\pi_1(M_0,p_1)$ 
are obtained by products of primitive classes (in the group law) and therefore the integrals
of $X$ along these classes are obtained by linear combinations over $\zz$ of integrals of $X$ on primitive classes, therefore it suffices to compute the integrals on $X$ on simple representatives (primitive classes) in $\pi_1(M_0,p_1)$. To prove the statement about the trivial holonomy representation, it suffices to prove that $\int_\gamma X\in 2\pi \zz$.

Let $\gamma: [0,2] \to M$ be a parametrization of this loop in such a way that $\gamma(0) = p_1$, $\gamma(1) = p_2$, 
$\gamma_1 := \gamma((0,1)) \subset {\rm int}(M_0)$, and $\gamma_2:=\gamma([1,2])\subset \partial M_0$. 
Now consider a thin tubular neighbourhood $\mc{O} := \{p\in {\rm int}M_0; {\rm dist}(p, \gamma_1) <\epsilon\}$ 
which is homeomorphic to $(0,1)\times (-\epsilon,\epsilon)$ and therefore simply connected, in particular we need 
to take $\eps$ so that ${\rm dist}(p_1,p_2)>2\eps$.
We define for points $p\in \bar{\mc{O}}$ the function $\alpha(p) := \int_{p_1}^pX$, so that  $d\alpha=X$
in $\bar{\mc{O}}$ (since $X$ is closed). Since $X=0$ on $\pl M_0$, we also have $\alpha|_{U_1} = 0$ where 
$U_1=\{p\in \pl M_0; {\rm dist}(p,p_1)\leq \eps\}$.
Now let $f$ be a smooth function defined on the boundary such that $f(p_1) = f(p_2) = 1$ and let $u_j$ solve (for $j=1,2$)
\[L_j u_j = 0,\ \ \  u|_{\pl M_0}=f.\]
Let $u := e^{i\alpha}u_1$ be defined in the tubular neighbourhood $\mc{O}$ and 
we shall now show that $u = u_2$ in $\mc{O}$ by unique continuation if the Cauchy data spaces $\mc{C}_{L_j}$ agree. 
Indeed, using that $\nabla^{X_2}e^{i\alpha}=e^{i\alpha}\nabla^{X_1}$ in $\mc{O}$ by simple calculation,  we deduce that 
$u$ solves $L_2u = 0$ in $\mc{O}$ since  
\[e^{-i\alpha}({\nabla^{X_2}}^*\nabla^{X_2}+q_2)e^{i\alpha}u_1=({\nabla^{X_1}}^*\nabla^{X_1}+q_1)u_1=0.\]
Furthermore, since $\alpha(p) = 0$ for all $p\in U_1$, one has $u(p) = u_1(p) = f(p) = u_2(p)$ for all $p\in U_1$.
 Moreover, since the normal components of $X_1$ and $X_2$ agree on $U_1$ as well, 
 the normal derivatives  satisfy
\[\partial_\nu u|_{U_1} = \pl_\nu u_1|_{U_1}  = \nabla_\nu^{X_1}u|_{U_1}-iX_1(\nu)f
=\nabla_\nu^{X_2}u|_{U_1}-iX_2(\nu)f=\partial_{\nu} u_2|_{U_1}\]
where in the third  equality we used the fact that the Cauchy data spaces $\mc{C}_{L_1}$ and $\mc{C}_{L_2}$ 
agree. So we have that $(u- u_2)$ is the solution of a homogenous elliptic equation in $\mc{O}$, vanishing on $U_1$ and 
with normal derivative vanishing on $U_1$, therefore by standard unique continuation 
\[e^{i\alpha(p)}u_1(p) = u(p) = u_2(p), \,\,\forall p\in \mc{O}.\] 
Now letting $p$ converging to $p_2 \in \bar{\mc{O}}$ and using the fact that $u_1|_{\pl M_0} = 
u_2|_{\pl M_0} = f$, 
we have that $e^{i\alpha(p_2)}f(p_2) = f(p_2)$. And since $f(p_2) = 1$ by assumption, 
we deduce that $e^{i\alpha(p_2)}= 1$ and consequently
\[\alpha(p_2) = \int_{\gamma_1}X \in 2\pi \zz.\]
Now since the tangential component of $X$ vanishes 
along the boundary and $\gamma_2\subset \pl M_0$, we deduce that $ \int_{\gamma_2}X = 0$ and
 conclude that
\[ \int_{\gamma}X =  \int_{\gamma_1}X \in 2\pi\zz\]
and the proof is complete.\\

\textbf{Second Proof}. Consider the functions $F_{A_j}$ of \eqref{defFAj}, then by Proposition \ref{cauchy data} and using 
$\bar{F}_{\bar{A}_j}=F_{A_j}^{-1}$, we know that 
$\Theta:=F_{A_1}/F_{A_2}=\bar{F}_{\bar{A}_2}/\bar{F}_{\bar{A}_1}$ is a function mapping $M_0$ to the unit circle $S^1\subset \cc$, 
and by \eqref{boundary agree}, \eqref{boundary agree2} we also have 
\[\bar{\pl}\Theta/\Theta= i(A_1-A_2), \quad \pl\Theta/\Theta=i(\bar{A}_1-\bar{A}_2)\]
and thus $d\Theta/\Theta=i(X_1-X_2)$. 
Let $\gamma,\mc{O}, p_1,p_2$ be like in the First Proof just above, 
we want to prove that $\int_{\gamma}d\Theta/\Theta\in 2i\pi\zz$.
In the tubular neighbourhood $\mc{O}$ of $\gamma$, we define 
$G(p):= i\int_{p_1}^p (X_1-X_2)=\int_{p_1}^p d\Theta/\Theta$, which is well defined in the simply connected domain $\mc{O}$ since 
$d(X_1-X_2)=0$, and $dG=d\Theta/\Theta$ in $\mc{O}$ with $G(p_1)=0$. The function $e^{G}$ takes value in $S^1$ and 
 we have then proved that $e^G=\Theta$ since $\Theta(p_1)=1$. Then, to conclude, it suffices to notice that $\Theta(p_2)=1$ 
and so $G(p_2)\in 2\pi i\zz$.
\end{proof}

\end{section}
\section{Appendix}
In this appendix, we gather a couple of technical results which are essentially already proved in the literature.
First, we give a Carleman estimate 
\begin{lemma} \label{CE}
Let $X\in W^{2,\infty}(M_0, T^1M_0)$ be real valued and $q\in W^{1,\infty}(M_0)$ complex valued and  set
$L={\nabla^{X}}^*\nabla^X+q$. Let $\phi$ be some harmonic real valued Morse function. Then there exists $C>0$ such that for all large $k\in \nn$ and all $u\in H^2(M_0)$
\[|| e^{-\phi/h} L e^{\phi/h}u||^2_{L^2}\geq C(\frac{1}{h}||u||^2_{L^2}+  ||du||^2_{L^2}).\]
\end{lemma}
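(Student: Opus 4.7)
The plan is a standard semiclassical Carleman estimate by the commutator method, adapted to a harmonic Morse weight. Writing $L_\phi := e^{-\phi/h} L e^{\phi/h}$, a direct computation using $\Delta \phi = 0$ yields
\[
L_\phi = -\Delta - \tfrac{2}{h}\langle \nabla\phi,\nabla\cdot\rangle_g - \tfrac{1}{h^2}|d\phi|_g^2 + R_X + q,
\]
where $R_X$ is a first-order operator with coefficients bounded in terms of $\|X\|_{W^{2,\infty}}$; the absence of a $\frac{1}{h}\Delta\phi$ term is the key simplification due to harmonicity. Decompose $L_\phi = S + A$ into formally self-adjoint and anti-self-adjoint parts with respect to $L^2(M_0, dv_g)$. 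For $u$ suitably supported (or with boundary terms handled separately) one has the standard identity
\[
\|L_\phi u\|_{L^2}^2 = \|Su\|_{L^2}^2 + \|Au\|_{L^2}^2 + \langle [S,A]u,u\rangle_{L^2}.
\]

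The heart of the argument is a lower bound on $[S,A]$. A direct calculation using harmonicity gives a positive principal part of the form $\frac{C_1}{h^3}|d\phi|_g^2 + \frac{C_2}{h}\mathrm{Hess}(\phi)(\nabla u,\overline{\nabla u})$, plus errors of size $O(h^{-2})$. The main obstacle is the behaviour near the critical set $\mathrm{Crit}(\phi)$, where $|d\phi|_g$ vanishes. Since $\phi$ is Morse, $\mathrm{Crit}(\phi)$ is finite and the Hessian is non-degenerate there; I would split $M_0 = \Omega_{h^{1/2}} \cup \Omega_{h^{1/2}}^c$, where $\Omega_\delta$ is the $\delta$-neighbourhood of $\mathrm{Crit}(\phi)$, and handle both regions uniformly. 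On $\Omega_{h^{1/2}}^c$ one has $|d\phi|_g^2 \geq ch$, so $\frac{1}{h^3}|d\phi|^2 \geq \frac{c}{h^2}$; on $\Omega_{h^{1/2}}$, rescaling to the model Hessian and applying a Poincar\'e inequality on balls of radius $h^{1/2}$ produces a matching lower bound of order $\frac{1}{h}$. Patching with a smooth cutoff and absorbing the $R_X$ and $q$ perturbations for $h$ small then gives $\|L_\phi u\|_{L^2}^2 \geq \frac{C}{h}\|u\|_{L^2}^2$.

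Finally, the additional $\|du\|_{L^2}^2$ term on the right-hand side comes from the self-adjoint piece: an integration by parts yields
\[
\langle S u, u\rangle_{L^2} = \|du\|_{L^2}^2 - \tfrac{1}{h^2}\|(d\phi)\,u\|_{L^2}^2 + O(\|u\|_{L^2}^2),
\]
so $\|du\|_{L^2}^2 \leq \|Su\|_{L^2}\|u\|_{L^2} + Ch^{-2}\|u\|_{L^2}^2$. Using $\|Su\|_{L^2} \leq \|L_\phi u\|_{L^2}$ together with the Carleman bound $\|u\|_{L^2}^2 \leq Ch\|L_\phi u\|_{L^2}^2$ just established, the right-hand side is absorbed into a constant multiple of $\|L_\phi u\|_{L^2}^2$, giving the claimed combined estimate. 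The contributions of $R_X$ and $q$ to both steps are lower order and are handled by Cauchy--Schwarz together with the smallness of $h$.
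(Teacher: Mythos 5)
Your outline founders at what you yourself call the heart of the argument: the claimed positivity of $\langle [S,A]u,u\rangle$ for the weight $\phi$ itself. For the conjugated Laplacian with weight $\phi$ the commutator is (in the Euclidean model, and up to curvature and lower order terms in general) $[S,A]=\tfrac{4}{h}\sum_{j,k}\phi_{jk}\partial_j\partial_k+\tfrac{4}{h^3}\,\mathrm{Hess}\,\phi(\nabla\phi,\nabla\phi)$; there is no term $\tfrac{1}{h^3}|d\phi|_g^2$. Since $\phi$ is harmonic on a surface, its Hessian is trace-free, hence indefinite, so \emph{both} pieces are sign-indefinite: $\tfrac1h\mathrm{Hess}\,\phi(\nabla u,\overline{\nabla u})$ has no sign, and $\mathrm{Hess}\,\phi(\nabla\phi,\nabla\phi)$ changes sign (for $\phi=x^2-y^2$ it equals $8(x^2-y^2)$), while for the model $\phi=x$ the commutator vanishes identically. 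This is exactly the degeneracy of limiting Carleman weights: away from the critical points there is simply no $O(h^{-2})$ positive term to exploit, so the splitting into $\Omega_{h^{1/2}}$ and its complement and the Poincar\'e argument near $\mathrm{Crit}(\phi)$ have nothing to patch together. The paper avoids this by not working with $\phi$ directly: it invokes the estimate of \cite{GT} (Lemma 3.2 there) for the \emph{convexified} weight $\phi_\epsilon=\phi+h\phi^2/(2\epsilon)$, whose commutator acquires a positive term of size $O(h/\epsilon)$; this both produces the $\tfrac1h\|u\|^2+\|du\|^2$ lower bound and absorbs the first-order magnetic terms $R_X$ once $\epsilon$ is fixed small, and the estimate is then transferred back to $\phi$ (as in Proposition 3.1 of \cite{GT}) because $e^{\phi_\epsilon/h}=e^{\phi/h}e^{\phi^2/(2\epsilon)}$ with $e^{\phi^2/(2\epsilon)}$ bounded above and below uniformly in $h$. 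Some substitute for this convexification (or for the $\bar\partial$-factorization specific to two dimensions) is indispensable; without it the commutator method as you set it up cannot produce the $\tfrac1h\|u\|_{L^2}^2$ term.

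There is also a quantitative slip in your last step. From $\langle Su,u\rangle$ you get $\|du\|^2\leq\|Su\|\,\|u\|+Ch^{-2}\|u\|^2+O(\|u\|^2)$, and even granting $\|u\|^2\leq Ch\|L_\phi u\|^2$ and $\|Su\|\leq\|L_\phi u\|$, the middle term is only bounded by $Ch^{-1}\|L_\phi u\|^2$, not by $C\|L_\phi u\|^2$; so the gradient bound does not follow by the absorption you describe, and you would end with the weaker $\|du\|^2\lesssim h^{-1}\|L_\phi u\|^2$. In the paper's route the full strength $\tfrac1h\|u\|^2+\|du\|^2\lesssim\|e^{-\phi/h}Le^{\phi/h}u\|^2$ comes packaged with the convexified-weight estimate of \cite{GT}, which is another reason that step cannot be bypassed.
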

\begin{proof} We observe that $L$ is a first order perturbation of the Laplacian $\Delta_g$ on $M_0$, therefore the 
Carleman estimate obtained for $\Delta_g$ in Lemma 3.2 of \cite{GT} 
with a convexified weight $\phi_\eps$ allows to absorb the first order terms by taking $\eps>0$ 
small and this shows the result for $L$ for the convexified weight. Then the argument of Proposition 3.1 in \cite{GT} shows 
that the desired estimate for $L$ holds for the weight $\phi$.
\end{proof}
As a corollary
\begin{coro}\label{laxmilgram}
With the same assumptions as in Lemma \ref{CE}, there exists $h_0>0$ and $C>0$ 
such that for all $h\in(0,h_0)$ and for all $f\in L^2(M_0)$ 
there exist a solution $u\in H^2(M_0)$ of $e^{-\phi/h}Le^{\phi/h}u=f$ with 
norms $||u||_{L^2}\leq C\sqrt{h}||f||_{L^2}$ and $||du||_{L^2}\leq C||f||_{L^2}$.
\end{coro}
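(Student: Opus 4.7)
The plan is to combine a standard Hahn--Banach/Riesz duality argument based on Lemma \ref{CE}, applied to the formal adjoint, and then re-use the same estimate applied to the solution itself to obtain the gradient bound. First, I would observe that the formal $L^2$-adjoint of $P_h := e^{-\phi/h}Le^{\phi/h}$ is $P_h^* = e^{\phi/h}L^* e^{-\phi/h}$, where $L^* = {\nabla^X}^*\nabla^X + \bar q$ has exactly the same structure as $L$ (only the zero-order term is conjugated). Since $-\phi$ is also a harmonic real-valued Morse function, Lemma \ref{CE} applied with $\phi \to -\phi$ and $q \to \bar q$ yields
\[
\|P_h^* v\|_{L^2}^2 \;\geq\; C\left( \frac{1}{h}\|v\|_{L^2}^2 + \|dv\|_{L^2}^2\right)
\]
for $v$ in an appropriate class (e.g.\ $H^2(M_0)\cap H^1_0(M_0)$, or compactly supported $v$ on the larger surface $M$ that contains $M_0$).

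Next, I would define a linear functional $T$ on the range $\{P_h^* v : v \in H^2\cap H^1_0\} \subset L^2(M_0)$ by setting $T(P_h^* v) := \langle v,f\rangle_{L^2}$. The adjoint Carleman estimate ensures that $P_h^*$ is injective on this class, so $T$ is well-defined, and it is bounded by
\[
|T(P_h^* v)| \;\leq\; \|v\|_{L^2}\|f\|_{L^2} \;\leq\; C\sqrt{h}\,\|P_h^* v\|_{L^2}\|f\|_{L^2}.
\]
Hahn--Banach extends $T$ to a bounded functional on $L^2(M_0)$ with the same norm, and Riesz representation produces $u\in L^2(M_0)$ with $\|u\|_{L^2}\leq C\sqrt h\,\|f\|_{L^2}$ satisfying $\langle u,P_h^* v\rangle = \langle v,f\rangle$ for all admissible $v$, i.e.\ $P_h u = f$ distributionally. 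Standard interior and boundary elliptic regularity for the second-order operator $P_h$ (whose coefficients lie in $W^{1,\infty}$) then upgrade $u$ to an element of $H^2(M_0)$.

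Finally, to obtain the gradient estimate, I would feed $u$ back into Lemma \ref{CE}:
\[
\|f\|_{L^2}^2 \;=\; \|P_h u\|_{L^2}^2 \;\geq\; C\left(\frac{1}{h}\|u\|_{L^2}^2 + \|du\|_{L^2}^2\right),
\]
giving the desired $\|du\|_{L^2}\leq C\|f\|_{L^2}$ (and as a bonus, reconfirming the $L^2$ bound). The main technical obstacle is that Lemma \ref{CE} as stated does not pin down boundary conditions, and the Hahn--Banach/Riesz step requires a consistent choice of dual boundary space on which the estimate holds and against which the integration by parts in $\langle u,P_h^* v\rangle = \langle P_h u,v\rangle$ closes with no boundary terms. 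The cleanest way around this, as in \cite{GT}, is to first extend $u$ and $f$ through $M_0$ to the ambient surface $M$ and prove the Carleman estimate for $v\in C_c^\infty(M)$, so that all boundary contributions drop out; once the weak solvability on $M$ is secured and localized back to $M_0$, the regularity upgrade and the final application of Lemma \ref{CE} are routine.
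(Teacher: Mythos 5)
Your first half (Hahn--Banach/Riesz applied to the adjoint $P_h^*=e^{\phi/h}L^*e^{-\phi/h}$, using that $-\phi$ is again harmonic and Morse and that $L^*$ has the same structure as $L$) is exactly the standard scheme the paper invokes, and it is fine provided the test functions are taken compactly supported (so that the pairing closes without boundary terms); it yields a solution with $\|u\|_{L^2}\leq C\sqrt{h}\|f\|_{L^2}$, and interior elliptic regularity gives $u\in H^2$. The genuine problem is your last step, where the gradient bound is obtained by feeding the constructed solution back into Lemma \ref{CE}.

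Although Lemma \ref{CE} is stated for all $u\in H^2(M_0)$, it cannot hold in that generality: take $w\in H^2(M_0)$, $w\not\equiv 0$, with $Lw=0$ (solve a Dirichlet problem, or take a Dirichlet eigenfunction if $0$ is an eigenvalue) and set $u=e^{-\phi/h}w$; then $e^{-\phi/h}Le^{\phi/h}u=0$ while $\frac{1}{h}\|u\|^2_{L^2}+\|du\|^2_{L^2}>0$. The estimate, as it is proved via the convexified-weight Carleman estimates of \cite{GT}, is valid for functions vanishing at the boundary (in practice compactly supported ones), and this restriction is essential. The solution $u$ produced by Hahn--Banach/Riesz is only an $L^2$ element with no boundary control whatsoever, so it is not in the admissible class and the "feed-back" application of Lemma \ref{CE} is not justified. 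You do flag the boundary issue, but you resolve it only for the duality step; extending $f$ and $u$ to the larger surface $M$ does not help the final step, since the solution on $M$ is again a weak $L^2$ solution with no vanishing at $\partial M$. Moreover the gap is substantive, not cosmetic: pairing $P_hu=f$ with $\chi^2\bar u$ and using only $\|u\|_{L^2}\leq C\sqrt h\|f\|_{L^2}$ gives at best $\|du\|_{L^2}\leq Ch^{-1/2}\|f\|_{L^2}$, because of the terms $h^{-1}\langle d\phi,du\rangle$ and $h^{-2}|d\phi|^2u$; and the plain $L^2$-duality with Lemma \ref{CE} cannot produce the gradient bound either (the $\|dv\|$-term on test functions dualizes into solvability for $H^{-1}$ data, not into an $H^1$ bound on $u$). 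So the estimate $\|du\|_{L^2}\leq C\|f\|_{L^2}$ has to be built into the construction of the solution itself, as in Lemma 4.4 of \cite{GT} which the paper cites (equivalently, one needs the suitably shifted/weighted form of the a priori estimate adapted to the norm $h^{-1/2}\|\cdot\|_{L^2}+\|d\cdot\|_{L^2}$); as written, your derivation of the gradient bound does not go through.
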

\begin{proof} The proof is a standard application of Lax-Milgram theorem (or Riesz representation) with the estimate of Lemma \ref{CE}, in exactly the same way as Lemma 4.4 of \cite{GT}. 
\end{proof}

The boundary determination is standard, but since there seem to be no proof in the case of the system $D+V$ studied in section \ref{firstorder}, we provide
a sketch of proof, based essentially on the arguments of \cite[Appendix]{GT2}. 
\begin{lemma}\label{boundarydet}
With the notations of Section \ref{firstorder}, let $V_1,V_2\in W^{1,p}(M_0,{\rm End}(\Sigma))$ be two diagonal 
complex valued potentials endomorphisms of $\Sigma$. Assume that the Cauchy data spaces $\mc{C}_{V_1}$ 
of $D+V_1$ and $\mc{C}_{V_2}$ of $D+V_2$ agree, then $i_{\pl M_0}^*V_1=i_{\pl M_0}^*V_2$.
\end{lemma}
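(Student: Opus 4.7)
The plan is to adapt the standard concentrating-CGO argument for boundary determination (see \cite[Appendix]{GT2}) to the geometric setting of the first order system $D+V$. Fix an arbitrary $z_0\in\pl M_0$ and choose boundary normal coordinates $(x_1,x_2)$ in a neighbourhood of $z_0$ so that $M_0$ is locally given by $\{x_2\geq 0\}$, the boundary corresponds to $\{x_2=0\}$, and $z_0=(0,0)$. In such a chart the metric is conformally Euclidean and $z=x_1+ix_2$ is a local complex coordinate, so $e^{i\tau z}=e^{i\tau x_1 -\tau x_2}$ is holomorphic on $M_0$, oscillates tangentially along $\pl M_0$, and concentrates exponentially on a layer of thickness $\tau^{-1}$ next to the boundary as $\tau\to\infty$.

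Using this phase, I would construct two families of approximate solutions of the form
\[
F_{1,\tau}=e^{i\tau z}\chi(\tau^\beta x_1)A+R_{1,\tau},\qquad F_{2,\tau}=e^{-i\tau\bar z}\chi(\tau^\beta x_1)A'+R_{2,\tau},
\]
where $A,A'\in\Sigma_{z_0}$ are constant fiber vectors, $\chi\in C_0^\infty(\mathbb R)$ with $\chi(0)=1$, $\beta\in(0,1)$ is chosen so that the tangential localization is wider than the normal concentration scale, and the remainders $R_{j,\tau}\in W^{1,p}(M_0,\Sigma)$ are produced by the right inverses of Proposition \ref{cauchy riemann} and Lemma \ref{inverseadjoint} applied to the residual source, so that $\|R_{j,\tau}\|_{L^2(M_0)}=o(\tau^{-\kappa})$ for some $\kappa>\beta+1$ fixed by $p>2$. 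These solve $(D+V_1)F_{1,\tau}=0$ and $(D+V_2^*)F_{2,\tau}=0$ exactly.

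Since $\mc{C}_{V_1}=\mc{C}_{V_2}$, the same Green formula as in \eqref{integid} (together with the hypothesis that $F_{1,\tau}$ admits a companion $\tilde F_\tau$ solving $(D+V_2)\tilde F_\tau=0$ with matching Cauchy trace) yields
\[
\int_{M_0}\langle(V_1-V_2)F_{1,\tau},F_{2,\tau}\rangle\,dv_g=0.
\]
Substituting the ansatz and rescaling $(x_1,x_2)=(\tau^{-\beta}y_1,\tau^{-1}y_2)$, the dominant contribution comes from the principal amplitudes multiplied by $|e^{i\tau z}|^2=e^{-2\tau x_2}$, which after rescaling produces a Laplace integral $\int_0^\infty e^{-2y_2}dy_2=\tfrac12$. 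The main term therefore equals
\[
\tau^{-\beta-1}\Bigl[(v_1-v_2)(z_0)\,A_1\bar{A}'_1+(v_1'-v_2')(z_0)\,A_2\bar{A}'_2\Bigr]\cdot c_{z_0}+o(\tau^{-\beta-1})
\]
for a nonzero constant $c_{z_0}$, where $A=(A_1,A_2)$, $A'=(A'_1,A'_2)$. Choosing first $(A,A')$ with only the first entries nonzero and then only the second entries nonzero, multiplying by $\tau^{\beta+1}$, and letting $\tau\to\infty$ forces $v_1(z_0)=v_2(z_0)$ and $v'_1(z_0)=v'_2(z_0)$; since $z_0\in\pl M_0$ is arbitrary, this gives $i_{\pl M_0}^*V_1=i_{\pl M_0}^*V_2$.

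The main obstacle is the careful construction of the remainders $R_{j,\tau}$ in the right function space so that their contribution to the integral identity is truly smaller than $\tau^{-\beta-1}$: one needs refined mapping properties of the right inverses $\bar\pl^{-1}$ and $\bar\pl^{*-1}$ acting on data concentrated in a $\tau^{-1}$ neighbourhood of $\pl M_0$, together with a choice of boundary condition for these inverses that is compatible with the ansatz. This is precisely what is done in \cite[Appendix]{GT2} for the disk; the argument carries over to the present Riemann surface setting after localizing via a boundary chart and using Proposition \ref{cauchy riemann} and Lemma \ref{inverseadjoint} to handle the global solvability.
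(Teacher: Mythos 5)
Your overall strategy --- solutions oscillating along the boundary and concentrating in a layer of thickness $\tau^{-1}$, fed into the Green identity \eqref{integid} --- is indeed the route the paper takes, but two steps of your construction fail as written. First, the phases are mismatched: in the Hermitian pairing the second solution is conjugated, and $\overline{e^{-i\tau\bar z}}=e^{i\tau z}$, so the principal part of $\langle (V_1-V_2)F_{1,\tau},F_{2,\tau}\rangle$ carries the factor $e^{i\tau z}\cdot e^{i\tau z}=e^{2i\tau x_1-2\tau x_2}$, not $|e^{i\tau z}|^{2}$. The residual tangential oscillation at frequency $2\tau$ makes this term $o(\tau^{-\beta-1})$ (one integration by parts in $x_1$, using $V_j\in W^{1,p}$), so your identity degenerates to $0=o(\tau^{-\beta-1})$ and gives no information at $z_0$; one must take the \emph{same} phase $e^{i\tau z}$ for the solution of $(D+V_1)F=0$ and for the adjoint solution of $(D+V_2^{*})F=0$, which is what the paper does. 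Second, a single scalar phase times a general constant vector $A\in\Sigma_{z_0}$ is not an approximate solution: $D$ swaps the two slots, and $\bar{\pl}^{*}$ applied to $e^{i\tau z}\chi(\tau^{\beta}x_1)A_2\,d\bar z$ produces a term of size $\tau$ times the amplitude, since $e^{i\tau z}$ is holomorphic rather than anti-holomorphic (symmetrically, $\bar{\pl}$ acting on the $\Lambda^{0}$ slot of your $F_{2,\tau}$ with phase $e^{-i\tau\bar z}$ is of size $\tau$). So the two diagonal entries cannot be recovered from one identity with a full vector amplitude: one runs the argument once with amplitude only in the $\Lambda^{0}$ slot and holomorphic decaying phase $e^{i\tau z}$ (giving $v_1(z_0)=v_2(z_0)$), and once with amplitude only in the $\Lambda^{0,1}$ slot and anti-holomorphic decaying phase $e^{-i\tau\bar z}$ (giving $v_1'(z_0)=v_2'(z_0)$), as in the paper's sketch.

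The point you defer as ``the main obstacle'' is also a genuine gap, and the tools you invoke do not deliver it. The residual of your principal part has $L^{2}$ norm of order $\tau^{\beta}\cdot\tau^{-(\beta+1)/2}=\tau^{(\beta-1)/2}$ (the cutoff derivative costs $\tau^{\beta}$), so applying the bounded right inverses of Proposition \ref{cauchy riemann} and Lemma \ref{inverseadjoint} to the residual source only gives $\|R_{j,\tau}\|_{L^{2}}=O(\tau^{(\beta-1)/2})$, nowhere near your requirement $o(\tau^{-\kappa})$ with $\kappa>\beta+1$, and not even enough to keep the cross terms (of size $\tau^{-1}$) below the main term $\tau^{-\beta-1}$. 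The paper closes this by a different mechanism: it solves the residual equation using the solvability statement $\|U\|_{L^{2}}\le C\|(D+V_j)U\|_{H^{-1}}$-type bound coming from Carleman estimates and Lax--Milgram, and, crucially, it measures the residual in $H^{-1}$, where writing $e^{iz/h}=-ih\,\pl_z e^{iz/h}$ and integrating by parts gains an extra power of $h$: with cutoff scale $h^{\alpha}$, $\|DA_h\|_{H^{-1}}=O(h^{3(1-\alpha)/2})$, hence for $\alpha=1/3$ the remainder is $O(h)$ in $L^{2}$ and contributes $O(h^{5/3})=o(h^{4/3})$, while the main term is of size $h^{4/3}$. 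Some such duality gain (or an equivalent refinement) must replace your asserted remainder bound before the limiting argument can be carried out.
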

\begin{proof}
Let $H^1_0(M_0,\Sigma)$ be the completion of $C_0^\infty({\rm int}(M_0))$
for the $H^1(M_0,\Sigma)$ topology, and $H^{-1}(M_0,\Sigma)$ the dual space.
By standard arguments (for instance Carleman estimates and Lax-Milgram theorem), we have that 
for all $W\in L^2(M_0,\Sigma)$, there exists a $U\in H^1(M_0,\Sigma)$ such that  
$(D+V_j)U=W$ and $||U||_{L^2}\leq C||W||_{H^{-1}}$. Now let 
$A_h=(a_h,0)\in C^{\infty}(M_0,\Sigma)$ with $a_h$ a function supported in a chart $\mc{U}_p$
near a boundary point $p$ and defined as follows: if $z=x+iy$ are complex coordinates near $p$ with 
$\{y=0\}=\pl M_0\cap \mc{U}_p$, $M_0\cap \mc{U}_p=\{y\geq 0\}$ 
and $p=\{z=0\}$ in this chart, 
then we set $a_h(z):=\eta(zh^{-\alpha})e^{iz/h}$ 
with $\eta\in C_0^\infty(\cc)$ supported in the chart and equal to $1$ at $z=0$, and $\alpha\in(0,1/2)$. 
Notice that $||A_h||_{L^2}=O(h^{\demi (1 +\alpha)})$ and 
 $DA_h=(0,h^{-\alpha} \pl_{\bar{z}}\eta (zh^{-\alpha})e^{iz/h})$ has $L^2$ norm $O(h^{\demi(1-\alpha)})$.
Let $=(f_1,f_2d\bar{z})\in H^1_{0}(M_0,\Sigma)$, then one has (the metric $g$ is of the form $e^{2\rho}|dz|^2$ for some smooth function 
$\rho$)
\[\cjg DA_h,F\cjd=-ih^{-\alpha+1}\int_{\mc{U}_p} \pl_z(e^{iz/h}) \cjg \pl_{\bar{z}}\eta (zh^{-\alpha})d\bar{z},f_2\cjd e^{2\rho}dxdy\] 
and integrating by parts, we loose at most a power $h^{-\alpha}$ when the derivative hits $\eta(zh^{-\alpha})$. Using 
Cauchy-Schwartz, we deduce $|\cjg DA_h,F\cjd|\leq Ch^{\frac{3}{2}(1-\alpha)}||F||_{H^1}$ and therefore 
$||DA_h||_{H^{-1}}=O(h^{\frac{3}{2}(1-\alpha)})$. Adding a potential is harmless and thus 
$||(D+V)A_h||_{H^{-1}}=O(h^{\frac{3}{2}(1-\alpha)})$.
Taking $\alpha=\frac{1}{3}$ for instance, we obtain that there exists $Z_h\in H^1(M_0,\Sigma)$, with norm 
$||Z_h||_{L^2}=O(h)$ such that $(D+V)F_h=0$ with $F_h=A_h+Z_h$. 
We get these solutions $F_h^1,F_h^2$ for the diagonal potentials $V_1=\begin{pmatrix}{v_1} & {0}\cr {0} & {v_1'}\end{pmatrix}$ 
and $V_1=\begin{pmatrix}{v_2} & {0}\cr {0} & {v_2'}\end{pmatrix}$ and plug them 
into the integral identity \eqref{integid}, giving then by elementary computations (and using $v_j,v_j'\in W^{1,p}(M_0)$)
 \[0=\int_{M_0} (v_1-v_2)\eta^{2}(zh^{-\frac{1}{3}})e^{-2y/h}e^{2\rho}dxdy+o(h^{\frac{4}{3}})=
 C(v_1(p)-v_2(p))h^{\frac{4}{3}}+o(h^{\frac{4}{3}}).\]
for some $C\not=0$. This proves that $v_1=v_2$ at $p$, the same argument can be used to prove that $v_1'=v_2'$ at $p$
and since $p$ is arbitrarily chosen, we have achieved the proof of the Lemma. 
\end{proof}

\end{document}